\renewcommand{\@seccntformat}[1]
{{\csname the#1\endcsname}.\hspace{0.3em}}
\renewcommand{\section}{\@startsection
{section}
{1}
{0mm}
{-1.5\baselineskip}
{\baselineskip}
{\bfseries\normalsize}}
\renewcommand{\subsection}{\@startsection
{subsection}
{2}
{0mm}
{-\baselineskip}
{0.5\baselineskip}
{\normalsize\itshape}}
\theoremstyle{plain}
\newtheorem{theorem}{Theorem}[section]
\newtheorem{lemma}[theorem]{Lemma}
\newtheorem{corollary}[theorem]{Corollary}
\newtheorem{prop}[theorem]{Proposition}
\newtheorem*{theorem*}{Theorem}
\newtheorem*{corollary*}{Corollary}
\newtheorem{claim}{Claim}
\newtheorem*{abel1}{Abelian subspaces theorem I}
\newtheorem*{abel2}{Abelian subspaces theorem II}
\newtheorem*{FT}{Factorisation theorem}
\newtheorem*{SR}{Strong rigidity}
\newtheorem*{VC}{Vaisman's conjecture}
\theoremstyle{definition}
\newtheorem*{defin*}{Definition}
\theoremstyle{remark}
\newtheorem{example}{Example}[section]
\newtheorem*{remark*}{Remark}
\DeclareMathAlphabet{\matheur}{U}{eur}{m}{n}
\DeclareMathAlphabet{\matheus}{U}{eus}{m}{n}
\DeclareMathAlphabet{\matheuf}{U}{euf}{m}{n}
\numberwithin{equation}{section}
\newcommand{\abs}[1]{\left\lvert#1\right\rvert}
\newcommand{\norm}[1]{\left\lVert#1\right\rVert}
\newcommand{\pardif}[2]{\frac{\partial #1}{\partial #2}}
\DeclareMathOperator{\Hom}{Hom}
\DeclareMathOperator{\rank}{rank}
\begin{document}

\author{Gerasim  Kokarev
\\ {\small\it School of Mathematics, The University of Edinburgh}
\\ {\small\it King's Buildings, Mayfield Road, Edinburgh EH9 3JZ, UK}
\\ {\small\it Email: {\tt G.Kokarev@ed.ac.uk}}
}

\title{On pseudo-harmonic maps in conformal geometry}
\date{}
\maketitle

\begin{abstract}
We extend harmonic map techniques to the setting of more general
differential equations in conformal geometry. We discuss existence
theorems and obtain an extension of Siu's strong rigidity to
K\"ahler-Weyl geometry. Other applications include topological
obstructions to the existence of K\"ahler-Weyl structures. For
example, we show that no co-compact lattice in $SO(1,n)$, $n>2$, can
be the fundamental group of a compact K\"ahler-Weyl manifold of
certain type.
\end{abstract}


\section{Introduction}
The purpose of this paper is to introduce and study an elliptic
quasilinear system of equations on maps between manifolds endowed with
linear connections. This system generalises the harmonic map equation
and, in many situations, is more suitable for geometric
applications. We demonstrate this in the context of conformal
geometry. More precisely, the first principal application is the
following extension of Siu's strong rigidity (Theorem~\ref{Siu2}).
\begin{SR}
Let $M$ be a compact K\"ahler-Weyl manifold and $M'$ be a compact
locally Hermitian symmetric space of non-compact type whose universal
cover does not contain the hyperbolic plane as a factor. Suppose that
there exists a homotopy equivalence $u:M\to M'$. If $M$ has complex
dimension $2$ or admits a pluricanonical metric, then $u$ is homotopic
to a biholomorphism for some invariant complex structure on $M'$.
\end{SR}

Any K\"ahler metric on $M$ is pluricanonical (see Sect.~\ref{PLURI}
for a precise definition) and for this case the strong rigidity is
due to Siu~\cite{Siu}. In complex dimension two any compact complex
manifold admits a K\"ahler-Weyl structure and the statement shows that
the complex structure on compact quotients of the unit ball in
$\mathbf C^2$ is globally rigid among all possible complex structures;
see~\cite{JY,JY2}. In complex dimension greater than two the
K\"ahler-Weyl condition forces $M$ to be locally conformal K\"ahler;
see Sect.~\ref{PLURI}. Such manifolds form a significantly larger
class of complex manifolds than K\"ahler ones. The theorem above
states that the complex structure on the locally Hermitian symmetric
space $M'$ is globally rigid among certain locally conformal K\"ahler
variations.

The difference between this version of the strong rigidity and the
version for K\"ahler manifolds can be illustrated by the following
conjecture, see~\cite[Ch.2]{DO}.
\begin{VC}
A compact locally conformal K\"ahler manifold of the ``same topology''
as a compact K\"ahler manifold admits a global K\"ahler metric.
\end{VC} 
In our context the appropriate meaning for the phrase ``same
topology'' is ``same homotopy type''. The positive answer to this
conjecture would imply the strong rigidity among all locally conformal
K\"ahler complex structures as a consequence of the rigidity
among K\"ahler ones. However, almost nothing is known on Vaisman's
conjecture in complex dimension greater than two.

As an application of the theorem above, we partially confirm
this conjecture for manifolds with pluricanonical metrics.
\begin{corollary*}
Let $M$ be a compact locally conformal K\"ahler manifold of the same
homotopy type as a locally Hermitian symmetric space of non-compact
type whose universal cover does not contain the hyperbolic plane as a
factor. If $M$ admits a pluricanonical metric, then it admits a global
K\"ahler metric.
\end{corollary*}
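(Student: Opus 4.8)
The plan is to derive the corollary as an essentially immediate consequence of the Strong rigidity theorem, the only extra input being the elementary fact that a locally Hermitian symmetric space of non-compact type is K\"ahler with respect to each of its invariant complex structures. First I would observe that a compact locally conformal K\"ahler manifold $M$ is in particular a compact K\"ahler-Weyl manifold: for the Weyl structure one takes the conformal class $c$ of the locally conformal K\"ahler metric together with the torsion-free connection $D$ determined by its Lee form, and the locally conformal K\"ahler condition says precisely that $(M,c,D,J)$ is K\"ahler-Weyl. Next, the hypothesis that $M$ has the same homotopy type as a compact locally Hermitian symmetric space $M'$ of non-compact type whose universal cover contains no hyperbolic plane factor provides a homotopy equivalence $u:M\to M'$; and since $M$ admits a pluricanonical metric, which we may take inside the conformal class $c$, all hypotheses of the Strong rigidity theorem hold. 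Applying it, $u$ is homotopic to a biholomorphism $\tilde u:M\to M''$, where $M''$ denotes $M'$ equipped with some invariant complex structure $J'$.

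It then remains to equip $M''$ with a K\"ahler metric and transport it to $M$ along $\tilde u$. The underlying Riemannian symmetric space of $M'$ is a quotient $\Gamma\backslash G/K$ with $G/K$ Hermitian symmetric of non-compact type, so the invariant metric $g_0$ induced by a suitable multiple of the Killing form is a product of irreducible factors; its invariant complex structures are, factor by factor, the structures $\pm J_i$, and for every such choice $g_0$ remains Hermitian while its fundamental two-form $g_0(J'\cdot,\cdot)$ differs from the original K\"ahler form only by signs on individual factors and so is still closed. Hence $g_0$ is K\"ahler for $J'$ as well; it descends to a K\"ahler metric $g''$ on $M''$, and $\tilde u^{*}g''$ is a K\"ahler metric on $M$ because $\tilde u$ is biholomorphic. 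Thus $M$ admits a global K\"ahler metric, as claimed.

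I do not anticipate a real obstacle: once the Strong rigidity theorem is granted, the remaining steps -- recognising a locally conformal K\"ahler structure as a K\"ahler-Weyl structure, and pulling a K\"ahler metric back along a biholomorphism -- are formal. The single point deserving attention is that the complex structure $J'$ produced by the rigidity theorem still makes $M'$ Hermitian symmetric, hence K\"ahler; this is exactly the observation about the structures $\pm J_i$ above, and it is here, just as in the rigidity theorem itself, that the non-compact type of $M'$ is used.
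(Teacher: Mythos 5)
Your proposal is correct as a derivation of the corollary from the Strong rigidity theorem taken as a black box: a compact locally conformal K\"ahler manifold with its canonical Weyl connection is indeed a K\"ahler--Weyl manifold, the theorem produces a biholomorphism onto $M'$ with some invariant complex structure $J'$, the invariant metric remains K\"ahler for $J'$ (your $\pm J_i$ observation is the right one), and pulling it back gives a K\"ahler metric on $M$. It is worth pointing out, though, that the paper's own route to the K\"ahler metric on $M$ is different and is forced to be so by the internal logic of the proof of Theorem~\ref{Siu2}: there, after establishing only that the Weyl harmonic homotopy equivalence $u$ is \emph{holomorphic}, the author proves directly that $M$ admits a global K\"ahler metric by a Vaisman-type conformal change --- one shows $d\Theta^\dagger=2i\bar\partial\Theta^{1,0}$ becomes $\partial\bar\partial$-exact after pulling the Lee form back from the K\"ahler target, writes $d\Theta^\dagger=2i\partial\bar\partial\varphi$, and verifies by an integration argument that $h=\exp(\varphi)g$ has vanishing Lee form. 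This intermediate step is not optional in the paper: the subsequent argument that $u$ is a biholomorphism (Siu's argument that the singular locus $C$ would represent a non-trivial class in $H_{2n-2}(M,\mathbf R)$) already uses that $M$ is K\"ahler. So your shortcut is legitimate for the corollary as stated, but if one unwinds the black box, the K\"ahlerness of $M$ is an ingredient of the biholomorphism conclusion rather than a consequence of it; your proof buys brevity, the paper's buys the non-circular order of deductions.
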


The second principal application of our technique is the following
extension of the results by Carlson and Toledo~\cite{CT89,CT97};
Theorem~\ref{lattices}.
\begin{theorem*}
Let $\Gamma$ be a co-compact discrete subgroup of $SO(1,n)$ with
$n>2$. If $\Gamma$ is the fundamental group of a K\"ahler-Weyl
manifold, then the latter can not be a complex surface and can not
admit a pluricanonical metric. 
\end{theorem*}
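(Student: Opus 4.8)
The plan is to transplant the harmonic-map argument of Carlson and Toledo into the pseudo-harmonic framework developed above. Suppose, for contradiction, that $\Gamma$ is the fundamental group of a compact K\"ahler-Weyl manifold $M$ which is a complex surface or carries a pluricanonical metric. Replacing $M$ by a suitable finite covering, and $\Gamma$ by the corresponding finite-index subgroup (which is again a co-compact lattice in $SO(1,n)$), we may assume by Selberg's lemma that $\Gamma$ is torsion-free and that $N=\Gamma\backslash\mathbf{H}^n$ is a compact \emph{orientable} real hyperbolic $n$-manifold; such a covering of $M$ is again a compact complex surface, respectively again carries a pluricanonical metric, so nothing is lost. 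Since $N$ is aspherical it is a $K(\Gamma,1)$, and fixing an isomorphism $\pi_1(M)\cong\Gamma$ we obtain a classifying map $u_0\colon M\to N$, unique up to homotopy, inducing that isomorphism on fundamental groups. As $N$ is compact and negatively curved, the existence theorem for pseudo-harmonic maps provides a pseudo-harmonic representative $u\colon M\to N$ of the homotopy class of $u_0$; since $u_*\colon\pi_1(M)\to\Gamma$ is an isomorphism and $\Gamma\neq 1$, the map $u$ is non-constant.

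The first substantial step is to apply to $u$ the Siu--Sampson-type Bochner identity that underlies the Strong rigidity theorem. The hypothesis that $M$ is a complex surface or admits a pluricanonical metric is precisely what keeps the contribution of the non-metric Weyl connection under control, so the identity holds just as in the K\"ahler case; and since $N$ has constant negative curvature, the curvature term appearing in it is semi-definite of the favourable sign. Reading off the identity then yields, exactly as in Carlson--Toledo, that $u$ is pluriharmonic and that $\rank_{\mathbf{R}}du\le 2$ at every point of $M$, i.e.\ $\partial u$ has complex rank at most one everywhere.

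Next I would invoke the Factorisation theorem. A non-constant pluriharmonic map of complex rank $\le 1$ factors --- after replacing the complex structure of $M$ by its conjugate if necessary, which leaves a K\"ahler-Weyl structure of the same type --- as $u=g\circ h$, where $h\colon M\to\Sigma$ is a surjective holomorphic map with connected fibres onto a compact Riemann surface $\Sigma$ and $g\colon\Sigma\to N$. Such an $h$ is surjective on fundamental groups (a standard fact for surjective holomorphic maps with connected fibres between compact complex manifolds); since $u_*=g_*\circ h_*$ is an isomorphism, $h_*$ is also injective, hence $h_*$ and $g_*$ are both isomorphisms, and $\pi_1(\Sigma)\cong\Gamma$. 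Then $\Sigma$ has positive genus and is itself a $K(\Gamma,1)$; but the $n$-th cohomology of a $K(\Gamma,1)$ is a homotopy invariant of $\Gamma$, equal on the one hand to $H^n(N;\mathbf{Z})=\mathbf{Z}$ by Poincar\'e duality on the closed orientable $n$-manifold $N$, and on the other hand to $H^n(\Sigma;\mathbf{Z})=0$ since $\Sigma$ is $2$-dimensional and $n\ge 3$. This contradiction proves the theorem.

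I expect the main difficulty to lie in the second step: establishing the Siu--Sampson Bochner identity and the resulting rank estimate for pseudo-harmonic --- rather than harmonic --- maps of K\"ahler-Weyl manifolds, and verifying that the torsion and non-metricity of the Weyl connection do not enter with the wrong sign exactly when $M$ is a complex surface or carries a pluricanonical metric. Once this is in hand, the existence of a pseudo-harmonic representative, the factorisation through a Riemann surface, and the cohomological contradiction are routine.
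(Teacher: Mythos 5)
Your proposal is correct and follows essentially the same route as the paper: a Weyl-harmonic classifying map exists by the existence theorem (the paper checks the needed hypothesis by noting $\Gamma\neq\mathbf Z$ since its cohomological dimension exceeds two, so the class contains no map onto a closed geodesic), pluriharmonicity and the rank bound $\rank du\leqslant 2$ come from the Abelian-subspaces machinery, the Factorisation theorem produces $h\colon M\to S$ with $h_*$ injective, and the contradiction is the same cohomological-dimension clash (the paper phrases it as $\mathrm{cd}(\Gamma)\leqslant 2$ versus $\mathrm{cd}(\Gamma)=n$, you as $H^n(K(\Gamma,1))$). Note only that the paper does not need (and does not assert) connected fibres or surjectivity of $h_*$ -- injectivity of $h_*$, which follows from $u_*=\phi_*\circ h_*$ being an isomorphism, already suffices.
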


Mention that the result of Taubes~\cite{Taubes} implies that any
finitely presentable group is the fundamental group of a closed
complex $3$-dimensional manifold, cf.~\cite{Am}. The groups in the
theorem illustrate the topological difference between the class of all
complex manifolds (in dimension greater than two) and K\"ahler-Weyl
manifolds with pluricanonical metrics. 

Now we outline the organisation of the paper. In
Sect.~\ref{sect1}-\ref{1:3} we introduce main equations and discuss
existence theorems. The study of equations of this kind was started in
the paper~\cite{JY2} by Jost and Yau, where the authors consider a
similar equation in Hermitian geometry. In particular, the main
existence result (Theorem~\ref{ExTh}) is a version of the one
in~\cite{JY2}. Its proof is discussed in Sect.~\ref{ETproof}.

In Sect.~\ref{firstBT}, as a first application of the existence
theorem, we obtain topological constraints on Weyl manifods with
non-negative symmetric part of the Ricci-Weyl
curvature. Sect.~\ref{PLURI}-\ref{top} are devoted to the applications
to K\"ahler-Weyl geometry. These include the strong rigidity and
topological obstructions to the existence of certain K\"ahler-Weyl
structures. The results generalise known ones for K\"ahler manifolds
and complex surfaces and put the latter into a general picture of
K\"ahler-Weyl geometry.

Some of our results admit further extensions. For example, one can
consider the strong rigidity for irreducible quotients of
polydisks. This would imply that the hypothesis on the universal cover
is unnecessary in the corollary above. The existence problem can be
also studied in the more general setting of twisted pseudo-harmonic maps.

\section{Preliminaries on pseudo-harmonic maps}
\label{sect1}
Recall the definition of the harmonic map equation. Let $(M,g)$ and
$(M',g')$ be Riemannian manifolds of dimensions $n$ and $n'$
respectively. Their Riemannian metrics give rise to a natural
metric on the $1$-jet bundle  $J^1(M,M')$ over the space $M\times M'$
and for maps $u:M\to M'$ we consider the energy functional 
$$
E(u)=\frac{1}{2}\int\limits_M\norm{du(x)}^2d\mathit{Vol}_g(x),\qquad x\in M.
$$
The Euler-Lagrange equation for this functional
$$
-\tau(u)(x)=0,\qquad x\in M,
$$
is called the {\it harmonic map equation} and its solutions are called
{\it harmonic mappings}. In local coordinates on $M$ and $M'$ the
operator $\tau(u)$ has the form
$$
\tau^i(u)=\Delta_Mu^i+g^{\alpha\beta}\Gamma^{\prime\ i}_{jk}
\pardif{u^j}{x^\alpha}\pardif{u^k}{x^\beta},
$$
where $g^{\alpha\beta}$ and $\Gamma^{\prime\ i}_{jk}$ denote the
tensor inverse to the metric on $M$ and the Cristoffel symbols of the
Levi-Civita connection on $M'$ respectively, and $\Delta_M$ is the 
Laplace-Beltrami operator on $M$.\footnote{There are different
conventions for the choice of the sign of the Laplace-Beltrami
operator. Due to our definition this operator is non-positive.}

The vector field $\tau(u)(x)$, where $x\in M$, is called the tension
field and can be alternatively described as follows. Consider the
second fundamental form $\mathcal D^2u$ of a map $u$, given by
\begin{equation}
\label{SFF}
\mathcal D^2u(X,Y)=\widetilde\nabla_X(du)(Y).
\end{equation}
Above $X$ and $Y$ are vector fields on $M$ and $\widetilde\nabla$
denotes the connection on the tensor product $T^*M\otimes u^*TM'$
induced by the Levi-Civita connections $\nabla$ and $\nabla'$ on $M$
and $M'$ respectively. It is a simple calculation to show that the
tension field $\tau(u)$ coincides with $\mathit{trace}_g{\mathcal
D^2}u$. This suggests to consider the following more general
operator; cf.~\cite{GK1}.
\begin{defin*}
Given arbitrary linear torsion-free connections $\nabla$ and $\nabla'$
on $M$ and $M'$ determine a connection $\widetilde\nabla$ on the
tensor product $T^*M\otimes u^*TM'$; we denote by the symbol
${\mathcal D^2}u$ the form given by~\eqref{SFF} with respect to this
connection. For a given Riemannian metric $g$ on $M$ the
correspondence 
$$
\mathit{Maps}(M,M')\ni u\longmapsto\mathit{trace}_g{\mathcal
  D^2}u\in\mathit{Sections}(u^*TM')
$$
defines a second order elliptic differential operator, called the {\it
  pseudo-harmonic map operator} and denoted below by
$\tau(g,\nabla,\nabla')$; the solutions of the corresponding equation 
\begin{equation}
\label{PHME}
\mathit{trace}_g{\mathcal D^2}u(x)=0,\qquad x\in M,
\end{equation}
are called {\it pseudo-harmonic mappings}. 
\end{defin*}
The reason for introducing the more general equation is that
geometrically interesting maps are often solutions of it. In more
detail, suppose that given manifolds $M$ and $M'$ are endowed with
some structures and connections $\nabla$ and $\nabla'$ which respect
(or preserve) these structures. Then we expect the morphisms of the
structures be corresponding pseudo-harmonic maps. For example, if $M$
and $M'$ are Riemannian manifolds and $\nabla$ and $\nabla'$ are their
Levi-Civita connections, then totally geodesic maps are, of course,
harmonic. Analogously, holomorphic and anti-holomorphic maps are
always  Hermitian harmonic (and not necessarily harmonic!) in the
example below.

\begin{example}[Hermitian harmonic map equation~\cite{JostYau}]
\label{HerHME}
Suppose manifolds $M$ and $M'$ are complex and let $\nabla$ and
$\nabla'$ be corresponding torsion-free complex connections. Let $g$
be a Hermitian metric on $M$.  A calculation shows that in local
coordinates $(1/2)\tau(g,\nabla,\nabla')$ has the form
$$
\frac{1}{2}\tau(g,\nabla,\nabla')(u)^i=g^{\alpha\bar\beta}\left(
\frac{\partial^2u^i}{\partial z^\alpha\partial z^{\bar\beta}}+
{\Gamma'}^i_{jk}\pardif{u^j}{z^\alpha}\pardif{u^k}
{z^{\bar\beta}}\right),
$$
where ${\Gamma'}^i_{jk}$ stand for Cristoffel symbols of the
connection on $M'$, see~\cite{GK1}. In particular, the equation
$(1/2)\tau(g,\nabla,\nabla')=0$ coincides with the Hermitian harmonic
map equation, introduced by Jost and Yau in~\cite{JostYau}; the
corresponding solutions are precisely harmonic maps when $M$ and $M'$
are K\"ahler.
\end{example}
\begin{example}[Hermitian harmonic maps into Riemannian manifolds]
\label{Nher}
Here we describe a natural pseudo-harmonic map equation on maps from a
complex manifold $(M,J)$ to a Riemannian manifold $(M',g')$. Let $g$
and $\nabla^\dagger$ be a Hermitian metric and a torsion-free complex
connection on $M$ and $\nabla'$ be the Levi-Civita connection of the
metric $g'$. The natural morphisms of manifolds $(M,J)$ and $(M',g')$
-- the so-called {\it pluriharmonic maps}, see
Sect.~\ref{PLURI}-\ref{top}, -- solve the corresponding
equation $\tau(g,\nabla^\dagger,\nabla')(u)=0$. We also refer to it
as the {\it Hermitian harmonic map equation}. A computation shows that
the latter differs from the harmonic map equation (for the metrics $g$
and $g'$) by a linear first order term,
$$
\tau(g,\nabla^\dagger,\nabla')(u)-\tau(u)=-du(J\!\cdot\!\delta J).
$$
Here the vector field $\delta J$ is given by the formula
$$
\delta J=-\mathit{trace}_g(\nabla J),
$$
where $\nabla$ denotes the Levi-Civita connection of the Hermitian
metric $g$. Recall that Hermitian manifolds with vanishing $\delta J$
are called co-symplectic. In particular, for such domains these
Hermitian harmonic maps coincide with harmonic maps.
\end{example}

Now we list a number of basic properties of pseudo-harmonic maps,
which are essentially consequences of the fact that they solve second
order elliptic differential equations. The statements below are
analogous to these by Sampson~\cite{Sam0} for harmonic maps; the
proofs in~\cite{Sam0} carry over without essential changes to the
pseudo-harmonic setting.
\begin{prop}[Unique continuation]
Let $M$ be a connected manifold and $u$, $v$ be two pseudo-harmonic
maps $M\to M'$, the solutions of equation~\eqref{PHME}. If they agree
on an open subset of $M$, then they are identical; the same conclusion
holds if $u$ and $v$ agree to infinitely high order at some point. In
particular, a pseudo-harmonic map which is constant on an open subset
is a constant map.
\end{prop}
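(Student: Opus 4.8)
The plan is to reduce the statement to Aronszajn's unique continuation theorem for second order elliptic inequalities, exactly as in Sampson's treatment of harmonic maps. First I would localise: since the domain $M$ is connected, it suffices to show that the set where $u$ and $v$ agree to infinite order is both open and closed; closedness is automatic, so the whole content is that this set is open. Working near a point $x_0$ at which $u$ and $v$ have infinite order of contact, I would choose a coordinate chart on $M$ around $x_0$ and a coordinate chart on $M'$ around the common value $u(x_0)=v(x_0)$, so that both maps are represented by $\mathbf R^{n'}$-valued functions on a ball in $\mathbf R^n$.

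Next I would write down the difference $w=u-v$ in these coordinates and derive a differential inequality for it. Each of $u$ and $v$ satisfies $\mathit{trace}_g\mathcal D^2(\cdot)=0$, which in local coordinates has the schematic form $g^{\alpha\beta}\partial_\alpha\partial_\beta u^i + b^i_\beta(x)\partial_\beta u^i + \Gamma'^i_{jk}(u)\,g^{\alpha\beta}\partial_\alpha u^j\partial_\beta u^k=0$, where the first order coefficients $b^i_\beta$ come from the connection $\nabla$ on $M$ (this is the only structural difference from the harmonic map case, and it is a harmless lower order term). Subtracting the equations for $u$ and $v$ and using the mean value theorem on the $\Gamma'$-term, I obtain an inequality of the form
$$
\abs{g^{\alpha\beta}\partial_\alpha\partial_\beta w}\le C\bigl(\abs{w}+\abs{\nabla w}\bigr)
$$
on a neighbourhood of $x_0$, with $C$ depending on $C^1$-bounds for $u$, $v$ and the connection coefficients. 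Since $g$ is a Riemannian metric, the principal part $g^{\alpha\beta}\partial_\alpha\partial_\beta$ is uniformly elliptic with Lipschitz (indeed smooth) coefficients, so Aronszajn's theorem applies: a solution of such an inequality that vanishes to infinite order at $x_0$ vanishes identically near $x_0$. This gives $w\equiv 0$ near $x_0$, proving openness and hence $u\equiv v$ on all of $M$.

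For the first assertion — that agreement on an open set forces global agreement — the argument is the same, since vanishing of $w$ on an open subset trivially implies vanishing to infinite order at interior points of that set. The final sentence about a pseudo-harmonic map constant on an open subset follows by taking $v$ to be the constant map with that value, which is visibly pseudo-harmonic (all derivatives of $v$ vanish). I expect the only point requiring care to be the verification that the lower order connection terms on the domain genuinely contribute only a first order expression with locally bounded coefficients, so that the hypotheses of Aronszajn's theorem are met; but this is immediate from the local formula for $\mathcal D^2u$ and does not affect the structure of the argument, which is why Sampson's proof carries over verbatim.
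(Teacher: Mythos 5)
Your proposal is correct and is precisely the argument the paper intends: the paper gives no details, deferring to Sampson's proof for harmonic maps (which is exactly this reduction of the difference of the two maps to a second order elliptic inequality handled by Aronszajn's theorem, the same device the paper uses explicitly in Appendix~B), and your verification that the extra first order term coming from the domain connection $\nabla$ is harmless is the only point at which the pseudo-harmonic case differs.
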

Thus, if a pseudo-harmonic map $u$ has rank zero on an open subset of
$M$, i.e. is constant on an open subset, it must have rank zero
everywhere. In the case of real-analytic manifolds (endowed with
real-analytic linear connections $\nabla$ and $\nabla'$ and a
real-analytic metric $g$ on the domain manifold) the pseudo-harmonic
map is also real-analytic and it follows that, if it has rank $r$ on
an open subset of $M$, then it has rank $r$ on an open and dense
subset. In the differentiable case we have the following statement
when the rank equals one.
\begin{prop}
\label{rank<1}
Let $M$ be a connected manifold and $u:M\to M'$ be a pseudo-harmonic
map. If the differential $du$ has rank one on an open subset of $M$,
then $u$ maps $M$ into a geodesic arc (with respect to the connection
$\nabla'$) in the manifold $M'$ and $du$ has rank one on an open and
dense subset. If $M$ is closed, then the geodesic arc is closed.
\end{prop}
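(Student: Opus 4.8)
\emph{Overall plan.} Following Sampson's argument for harmonic maps, I would (1) use the constant rank theorem to factor $u$ locally on the set where $du$ has rank one; (2) substitute the factorisation into the pseudo-harmonic equation to see that the factor curve is a geodesic up to reparametrisation; (3) propagate this to all of $M$ by a connectedness argument, invoking the preceding unique continuation property to cross the points where $du$ vanishes; (4) deduce the density claim and settle the compact case using the maximum principle together with a monodromy argument.

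\emph{Local factorisation and the curve.} Let $U$ be an open set on which $du$ has rank one. By the constant rank theorem every $p\in U$ has a connected neighbourhood $V$ with $u|_V=\gamma\circ f$, where $f\colon V\to I\subseteq\mathbf R$ is a submersion onto an interval and $\gamma\colon I\to M'$ an embedded curve. A direct computation from \eqref{SFF}, using that $\widetilde\nabla$ on $T^*M\otimes u^*TM'$ is induced by $\nabla$ and $\nabla'$, gives
\[
\mathcal D^2u(X,Y)=(\nabla df)(X,Y)\,(\dot\gamma\circ f)+df(X)\,df(Y)\,(\nabla'_{\dot\gamma}\dot\gamma\circ f),
\]
where $\nabla df$ is the Hessian of $f$ relative to $\nabla$ and $\nabla'_{\dot\gamma}\dot\gamma$ the acceleration of $\gamma$. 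Tracing with $g$ and using that $u$ solves \eqref{PHME} yields
\[
(\trace_g\nabla df)\,(\dot\gamma\circ f)+\abs{df}_g^2\,(\nabla'_{\dot\gamma}\dot\gamma\circ f)=0 .
\]
Since $g$ is Riemannian and $f$ a submersion, $\abs{df}_g^2>0$ on $V$; evaluating this identity along a level set $f^{-1}(t)$ forces $\nabla'_{\dot\gamma}\dot\gamma(t)$ to be collinear with $\dot\gamma(t)$ for every $t\in I$. Hence $\gamma$ is a geodesic up to reparametrisation and its image is an open arc of a $\nabla'$-geodesic; in particular $\mathcal D^2u(X,Y)\in\Image du_x$ for $x\in U$.

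\emph{Globalisation.} Fix $p_0\in U$ and let $\Gamma$ be the maximal $\nabla'$-geodesic through $u(p_0)$ tangent to the line $\Image du_{p_0}$; put $G$ equal to its image. Let $W\subseteq M$ consist of the points with a neighbourhood mapped by $u$ into $G$. Then $W$ is open, $p_0\in W$ by the previous step, and it remains to show $W$ is closed, which by connectedness of $M$ gives $u(M)\subseteq G$. Let $x\in\overline W$. Since the rank of $du$ is lower semicontinuous and equals at most one on $W$ (images there lie in the one-dimensional set $G$), $\rank du_x\le 1$. If $\rank du_x=1$, the local factorisation at $x$ produces a geodesic arc $G_x$ through $u(x)$; comparing with a nearby point of $W\cap U$ shows that $G_x$ and $G$ overlap along a non-degenerate sub-arc, hence lie on a common maximal geodesic, so $G_x\subseteq G$ and $x\in W$. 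If $du_x=0$, one passes to the universal cover, where the local functions $f$ patch (after affine normalisation) to a global $h$ with $u=\Gamma\circ h$ on $W$, and extends $h$ past $x$ by the unique continuation property, again placing a neighbourhood of $x$ in $W$.

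\emph{Density, the compact case, and the main obstacle.} Once $u(M)\subseteq G$, the differential $du$ has rank at most one everywhere; the set $\{du=0\}$ is closed and, were it to have nonempty interior, unique continuation would make $u$ constant, contradicting $U\ne\emptyset$. Hence $du$ has rank one on an open dense set. If $M$ is closed, then $u(M)$ is a compact connected subset of $G$; tracking the affine parameter of $\Gamma$ around loops in $M$ produces a monodromy representation of $\pi_1(M)$ into the affine group of $\mathbf R$, and if its image is trivial then $u=\Gamma\circ h$ globally with $h$ satisfying the zeroth-order-free elliptic equation $\trace_g\nabla dh=0$, so $h$ is constant by the maximum principle and $u$ is constant --- impossible. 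Ruling out the remaining monodromy (dilations and reflections are excluded since geodesics have nowhere-vanishing velocity and $u(M)$ is compact) forces $G$ to be a closed geodesic. The computation in the local step is routine and the compact case is soft; the genuine difficulty is the globalisation, namely carrying the single geodesic $G$ across the points where $du$ vanishes without any recourse to analyticity, which is exactly where unique continuation and semicontinuity of rank are needed.
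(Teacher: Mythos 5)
Your local computation (the factorisation $u|_V=\gamma\circ f$, the trace identity, and the conclusion that $\gamma$ is a pre-geodesic) is correct and is exactly the first step of the argument the paper has in mind, and your density argument via unique continuation is fine once the image is known to lie on a single geodesic. The problem is that the globalisation --- which you yourself single out as ``the genuine difficulty'' --- is not actually carried out. At a point $x\in\overline W$ with $\rank du_x=1$ you invoke ``the local factorisation at $x$'', but the constant rank theorem needs the rank to be \emph{locally constant} near $x$, and lower semicontinuity only gives $\rank du\geqslant 1$ nearby: at points near $x$ outside $W$ the rank could perfectly well be $2$, since ruling that out is precisely part of what the proposition asserts. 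At a point with $du_x=0$ you propose to ``extend $h$ past $x$ by the unique continuation property'', but the unique continuation statement in the paper compares two pseudo-harmonic maps that agree on an open set; it gives no mechanism for continuing a scalar factor of a single map across its critical set, and no such mechanism is supplied. So the open set $W$ is never shown to be closed.

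The paper's route (it explicitly defers to Sampson's proofs and announces that ``the proof of this proposition uses the following version of the maximum principle'') closes exactly this gap with the hypersurface maximum principle stated immediately after the proposition: near a boundary point $q=u(x)$ of the good set one traps the image using hypersurfaces through points of the geodesic $\Gamma$ with definite second fundamental form whose concave sides cut out $\Gamma$ --- equivalently, one notes that $\dist(u(\cdot),\Gamma)^2$ is a non-negative subsolution of the zeroth-order-free elliptic operator $\trace_g\mathcal D^2$ which vanishes on an open set, and applies the strong maximum principle. That is the tool that carries the single geodesic across the degeneracies of $du$ without analyticity; your proposal deploys the maximum principle only in the compact case (where your monodromy argument is plausible but also sketchier than the standard one, which again uses the hypersurface maximum principle at a putative endpoint of the image arc to show the geodesic must close up). As written, the heart of the proof is missing.
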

The proof of this proposition uses the following version of the
maximum principle.
\begin{prop}[Maximum principle]
Let $u:M\to M'$ be a pseudo-harmonic map and $p\in M$, $q\in M'$ be
points such that the former is mapped onto the latter under $u$,
$q=u(p)$. Let $S$ be a piece of a hypersurface in $M'$ passing through
$q$, at which we assume that the second fundamental form (with respect
to the connection on $M'$) is definite. If $u$ is not a constant
mapping, then no neighbourhood of $p$ is mapped entirely to the
concave side of $S$.
\end{prop}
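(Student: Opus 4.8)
The plan is to argue by contradiction: assume $u$ is not constant but some neighbourhood $U$ of $p$ is mapped entirely to the concave side of $S$. The idea is to compose $u$ with a locally defined function on $M'$ that measures "distance to $S$" in a way that is convex near $q$, and then apply the strong maximum principle to the resulting function on $U$. First I would set up coordinates near $q$: choose a defining function $\varphi$ for $S$ on a neighbourhood of $q$ in $M'$, normalised so that $\varphi(q)=0$, $\varphi\ge 0$ on the concave side, and $d\varphi(q)\neq 0$. The hypothesis that the second fundamental form of $S$ at $q$ is definite (say positive relative to the chosen side) translates, after possibly modifying $\varphi$ by a term quadratic in $\varphi$ itself, into the statement that the Hessian $\nabla' d\varphi$ — computed with the connection $\nabla'$ on $M'$ — is positive definite on $T_qM'$, hence on a smaller neighbourhood $V$ of $q$.

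Next I would consider the composition $h=\varphi\circ u$ on $u^{-1}(V)\cap U$, a neighbourhood of $p$ on which $h\ge 0$ and $h(p)=0$, so $h$ attains an interior minimum at $p$. The key computation is the chain rule for the trace of the second fundamental form: using that $u$ is pseudo-harmonic, i.e. $\operatorname{trace}_g\mathcal D^2u=0$, one gets
$$
\Delta_g h=\operatorname{trace}_g\bigl((\nabla' d\varphi)(du,du)\bigr)+d\varphi\bigl(\operatorname{trace}_g\mathcal D^2u\bigr)=\operatorname{trace}_g\bigl((\nabla' d\varphi)(du,du)\bigr).
$$
Since $\nabla' d\varphi$ is positive definite on $V$ and $g$ is a Riemannian metric on $M$, the right-hand side is $\ge 0$, and it is $>0$ at any point where $du\neq 0$. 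Thus $h$ is a non-negative subsolution of a second order elliptic operator, $\Delta_g h\ge 0$, attaining the value $0$ at the interior point $p$. By the strong maximum principle (Hopf), $h\equiv 0$ on a neighbourhood of $p$; consequently $\Delta_g h\equiv 0$ there, which forces $du\equiv 0$ on that neighbourhood, i.e. $u$ is constant on an open set. By the unique continuation property (the first Proposition in the excerpt) $u$ is then globally constant, contradicting the hypothesis.

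The step I expect to be the main obstacle is the reduction of the geometric hypothesis — "$S$ has definite second fundamental form and $U$ lies on the concave side" — to the analytic statement "$\nabla' d\varphi > 0$ near $q$ after an admissible change of defining function", keeping careful track of signs (which side is concave, and the sign convention for the second fundamental form with respect to $\nabla'$, which here need not be metric). One must check that replacing $\varphi$ by $\psi(\varphi)$ for a suitable smooth convex increasing $\psi$ with $\psi(0)=0$ preserves positivity of the restriction of the Hessian to $T_qS$ while fixing up its normal component, so that the full Hessian becomes definite on a neighbourhood; this is a standard but slightly delicate manoeuvre. Everything else — the chain rule identity above and the invocation of the Hopf maximum principle together with unique continuation — is routine, exactly as in Sampson's treatment of the harmonic case, which the excerpt already tells us carries over verbatim.
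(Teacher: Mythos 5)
The paper gives no proof of this proposition: it states that the basic properties in this section, including the maximum principle, are taken from Sampson's paper on harmonic maps and that his proofs ``carry over without essential changes.'' Your reconstruction follows exactly Sampson's strategy (compose with a defining function whose Hessian is made definite, apply the chain rule and the equation, invoke Hopf and then unique continuation), so in spirit you are doing precisely what the paper intends.

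There is, however, a concrete sign inconsistency in your normalisation that makes the Hopf principle inapplicable as written. You take $\varphi\geq 0$ on the concave side with $\nabla'd\varphi$ positive definite, so that $h=\varphi\circ u\geq 0$ attains an interior \emph{minimum} at $p$ while satisfying $Lh\geq 0$ (a subsolution). A subsolution attaining an interior minimum is not forced to be constant --- compare $h=|x|^2$ with $\Delta h>0$ --- so no conclusion follows from this configuration. The correct normalisation (model case: $S$ a sphere, concave side the interior, $\varphi=|y|^2-r^2$) is $\varphi\leq 0$ on the concave side with positive definite Hessian; then $h\leq 0$, $h(p)=0$ is an interior \emph{maximum} of the subsolution $h$, Hopf gives $h\equiv 0$ near $p$, and $Lh\equiv 0$ together with definiteness of $\nabla'd\varphi$ and of $g$ forces $du\equiv 0$ there, after which unique continuation finishes the argument. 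You flagged the sign-tracking as the delicate point, but the version you committed to paper is the one that does not work. A second, minor, point: since the connection $\nabla$ on the domain is an arbitrary torsion-free connection and not necessarily the Levi-Civita connection of $g$, the quantity produced by the chain rule is $\operatorname{trace}_g\nabla dh$, which differs from $\Delta_g h$ by a first-order term $-dh\bigl(\operatorname{trace}_g(\nabla-\nabla^{LC})\bigr)$; this is harmless because $\operatorname{trace}_g\nabla d$ is still elliptic with no zeroth-order term, so the strong maximum principle applies to it directly, but the identity should not be written with $\Delta_g$ on the left.
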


The existence of pseudo-harmonic maps for arbitrary connections
$\nabla$ and $\nabla'$ seems to be a subtle question. To make a first
step we suppose that a {\em connection $\nabla'$ on the target
  manifold $M'$ is metric}. In this case the pseudo-harmonic map
equation differs from the harmonic map equation by a first order
term and we discuss, in Sect.~\ref{1:3}, existence results for such
pseudo-harmonic maps when $M'$ has non-positive sectional curvature.

\section{Pseudo-harmonic maps in Weyl geometry}
\label{1:3}
For the rest of the paper $\nabla$ and $\nabla'$ {\em always stand for
  the Levi-Civita connections of the metrics $g$ and $g'$ on $M$ and
  $M'$} respectively. We now specialise the considerations to the
  setting of Weyl geometry.

\subsection{First definitions}
A Weyl structure on a conformal manifold $(M,c)$ is a torsion-free
linear connection $\nabla^W$ preserving the conformal structure $c$.
This means that for any Riemannian metric $g\in c$ there exists an
$1$-form $\Theta$, called the {\em Higgs field}, such that
$\nabla^Wg=\Theta\otimes g$. Alternatively, one can define $\nabla^W$
by the following formula
\begin{equation}
\label{WeylDef}
\nabla^W_XY=\nabla_XY-\frac{1}{2}\Theta (X)Y-\frac{1}{2}\Theta (Y)X
+\frac{1}{2}g(X,Y)\Theta^\sharp,
\end{equation}
where $\Theta^\sharp$ is a vector field dual to $\Theta$ with respect
to $g$. The $2$-form $d\Theta$ is called the {\em distance curvature
  function} and does not depend on $g\in c$. A Weyl structure whose
distance curvature function vanishes is called {\em closed}; the
cohomology class $[\Theta]\in H^1(M,\mathbf{R})$ does not depend on
$g\in c$. If the latter vanishes the Weyl structure is called {\em
  exact}. An exact Weyl structure, $\Theta=dV$, coincides with the
Levi-Civita connection of $\exp (-V)g$. More generally, a closed Weyl
structure is locally the Levi-Civita connection of a compatible
metric; it does not need to be a global metric connection unless $M$
is simply connected.

Let us fix a Riemannian metric $g\in c$ and consider the
pseudo-harmonic map equation with respect to a Weyl connection and a
metric $g$ on the domain $M$ and a metric connection on the target
$M'$,
\begin{equation}
\label{WEq}
\tau(g,\nabla^W\!\!,\nabla')(u)(x)=0,\qquad x\in M.
\end{equation}
\begin{defin*}
A map from a conformal Weyl manifold $(M,c,\nabla^W)$ to a Riemannian
manifold $(M',g')$ which solves equation~\eqref{WEq} is called {\em
  Weyl harmonic}. 
\end{defin*}

Clearly, the property of a map being Weyl harmonic does not depend on
a reference metric $g\in c$. We use below the notation $\tau^W(u)$ for
the operator $\tau(g,\nabla^W\!\!,\nabla')$. Straightforward
calculations yield 
\begin{equation}
\label{Weyl&HME}
\tau^W(u)-\tau(u)=-\left(\frac{n-2}{2}\right)du(\Theta^\sharp),
\end{equation}
where $\tau(u)$ is the harmonic map operator $\tau(g,\nabla,\nabla')$.
In particular, in dimension two equation~\eqref{WEq} coincides with
the harmonic map equation. Further, if the Higgs field $\Theta$ is
exact, $\Theta=dV$, then Weyl harmonic maps coincide with harmonic
maps with respect to the metric $\exp(-V)g$ on the domain. More
generally, for a closed Weyl structure solutions of~\eqref{WEq} are
locally harmonic maps with respect to a compatible metric on $M$ and
do not need to be global harmonic maps. This means that solutions
of~\eqref{WEq} are harmonic maps from the covering $\tilde M$ (endowed
with a compatible metric) such that the lifted form $\tilde\Theta$
becomes exact. The fundamental group $\pi_1(M)$ acts by deck
transformations on $\tilde M$ and these harmonic maps are
$\pi_1(M)$-equivariant.

In dimension greater than two there is no loss of generality in
considering Weyl harmonic map equation instead of a general
pseudo-harmonic map equation. More precisely, the following
observation holds.
\begin{prop}
Let $(M,g)$ and $(M',g')$ be Riemannian manifolds and suppose that the
former is endowed with an additional torsion-free connection
$\nabla^\dagger$. Then there exists a unique Weyl connection
$\nabla^W$ on $M$ preserving the conformal class of $g$ such that
$$
\tau(g,\nabla^W,\nabla')=\tau(g,\nabla^\dagger,\nabla').
$$
\end{prop}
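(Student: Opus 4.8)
The plan is to pin down exactly how the operator $\tau(g,\cdot,\nabla')$ depends on the connection chosen on the domain. First I would record the following elementary fact, read off directly from the invariant description $\tau(g,\nabla^\dagger,\nabla')(u)=\mathit{trace}_g\mathcal{D}^2u$ together with~\eqref{SFF}: since $\widetilde\nabla_X(du)(Y)=(u^*\nabla')_X(du(Y))-du(\nabla^\dagger_XY)$, the domain connection enters only through the last term, so that for any two torsion-free connections $\nabla^1,\nabla^2$ on $M$,
\[
\tau(g,\nabla^1,\nabla')(u)-\tau(g,\nabla^2,\nabla')(u)=-du\bigl(\mathit{trace}_g(\nabla^1-\nabla^2)\bigr),
\]
where $\nabla^1-\nabla^2$ is the difference tensor $(X,Y)\mapsto\nabla^1_XY-\nabla^2_XY$; it is symmetric in $X$ and $Y$ because both connections are torsion-free, hence its $g$-trace is a well-defined vector field on $M$.

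Applying this with $\nabla^1=\nabla^\dagger$ and $\nabla^2=\nabla$ the Levi-Civita connection of $g$ gives $\tau(g,\nabla^\dagger,\nabla')(u)=\tau(u)-du(V)$ with $V:=\mathit{trace}_g(\nabla^\dagger-\nabla)$. Comparing with~\eqref{Weyl&HME}, the natural candidate is the Weyl connection $\nabla^W$ whose Higgs field relative to $g$ is $\Theta:=\tfrac{2}{n-2}\,g(V,\cdot)$ -- this is the only place the hypothesis $n=\dim M>2$ enters -- defined by~\eqref{WeylDef} with this $\Theta$. A short trace computation, the same one that underlies~\eqref{Weyl&HME}, shows $\mathit{trace}_g(\nabla^W-\nabla)=\tfrac{n-2}{2}\Theta^\sharp=V$, and therefore $\tau(g,\nabla^W,\nabla')(u)=\tau(u)-du(V)=\tau(g,\nabla^\dagger,\nabla')(u)$, which gives existence.

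For uniqueness, suppose $\nabla^{W_1}$ and $\nabla^{W_2}$ both satisfy the identity and let $\Theta_1,\Theta_2$ be their Higgs fields with respect to $g$. By~\eqref{Weyl&HME} the hypothesis forces $du(\Theta_1^\sharp-\Theta_2^\sharp)=0$ for every smooth map $u\colon M\to M'$; a standard test-map argument -- at a point where $\Theta_1^\sharp-\Theta_2^\sharp$ is nonzero, compose a curve in $M'$ of nonzero velocity with a real-valued function on $M$ whose differential pairs nontrivially with $\Theta_1^\sharp-\Theta_2^\sharp$ there -- then yields $\Theta_1^\sharp=\Theta_2^\sharp$, hence $\Theta_1=\Theta_2$, and since a Weyl connection preserving the conformal class of $g$ is determined by its Higgs field relative to $g$ via~\eqref{WeylDef}, we get $\nabla^{W_1}=\nabla^{W_2}$. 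I do not expect a genuine obstacle here: the only point requiring care is the bookkeeping in the first displayed identity and the accompanying trace computation, and both are routine. (When $n=2$ the statement is empty of content, since~\eqref{Weyl&HME} shows $\tau^W=\tau$ regardless of the Weyl structure; this matches the restriction to $n>2$ in the surrounding discussion.)
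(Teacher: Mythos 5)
Your proposal is correct and follows essentially the same route as the paper, whose entire proof consists of exhibiting the Higgs field $\Theta^\sharp=(2/(n-2))\mathit{trace}_g(\nabla^\dagger-\nabla)$ — exactly your candidate. You merely supply the verification (the difference formula $-du(\mathit{trace}_g(\nabla^1-\nabla^2))$ and the trace computation matching \eqref{Weyl&HME}) and the uniqueness argument that the paper leaves implicit; both are correct.
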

\begin{proof}
The Higgs field $\Theta^\sharp$ of this Weyl connection is given by
the formula
$$
\Theta^\sharp=(2/(n-2))\mathit{trace}_g(\nabla^\dagger-\nabla),
$$
where $\nabla$ is the Levi-Civita connection of $g$.
\end{proof}

\subsection{Existence and uniqueness}
Let $(M,c,\nabla^W)$ and $(M',g')$ be a closed Weyl manifold and a
closed Riemannian manifold respectively. Suppose that the latter has
non-positive sectional curvature. Then, due to the Eells-Sampson
theorem~\cite{EeSa}, Weyl harmonic maps exist in any homotopy class
provided $\dim M=2$ or the Weyl structure is exact. For arbitrary Weyl
connections we have the following assertion.
\begin{theorem}
\label{ExTh}
Let $(M,g)$ and $(M',g')$ be closed Riemannian manifolds. Suppose
that the former has dimension greater than two and is endowed with a
Weyl connection preserving the conformal class of $g$ and the latter
has non-positive sectional curvature. Let $[v]$ be a homotopy class of
mappings $M\to M'$ such that either
\begin{itemize}
\item[(i)] it does not contain a map $v$ whose pull-back bundle
  $v^*TM'$ has a non-trivial parallel section, or
\item[(ii)] it does not contain a map onto a closed geodesic and the
  manifold $M'$ has strictly negative curvature.
\end{itemize}
Then the homotopy class $[v]$ contains a Weyl harmonic map.
\end{theorem}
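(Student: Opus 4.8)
The plan is to produce the Weyl harmonic map by deforming $v$ along a parabolic flow. By \eqref{Weyl&HME} the equation \eqref{WEq} differs from the harmonic map equation only by the smooth first order term $-\tfrac{n-2}{2}\,du(\Theta^\sharp)$, so I would study
\begin{equation*}
\partial_t u=\tau^W(u),\qquad u(\cdot,0)=v .
\end{equation*}
Its principal part is that of the Laplacian $\Delta_M$, so this is a uniformly parabolic quasilinear system and standard parabolic theory yields a unique smooth solution on a maximal interval $[0,T)$.

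The next step is a Bochner inequality for the energy density $e(u)=\tfrac12\abs{du}^2$. Differentiating \eqref{WEq} and using that $\nabla'$ is the Levi-Civita connection of the non-positively curved metric $g'$, the target curvature term enters with the favourable sign, while the Ricci term of $g$ and the contribution of the Weyl first order term are of lower order; the latter is bounded by $\abs{\Theta}\bigl(\abs{\nabla du}\abs{du}+\abs{du}^2\bigr)$ and absorbed into $\abs{\nabla du}^2$ by Cauchy--Schwarz. One obtains $\bigl(\partial_t-\Delta_M\bigr)e(u)\le C\,e(u)$ with $C$ depending only on $g$, $g'$ and the Weyl connection, whence the maximum principle gives $e(u(\cdot,t))\le e^{Ct}e(u(\cdot,0))$. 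In particular the flow cannot blow up in finite time and exists for all $t\in[0,\infty)$.

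The crux is the behaviour as $t\to\infty$: unlike the harmonic map flow, \eqref{WEq} is not a gradient flow, so there is no a priori bound on $E(u(\cdot,t))$ and a genuinely new estimate is needed. I would pass to the universal covers $\widetilde M\to M$ and $\widetilde M'\to M'$, the latter a Cartan--Hadamard manifold, lift the flow to a $\rho$-equivariant family $\widetilde u_t$ with $\rho=v_*\colon\pi_1(M)\to\pi_1(M')$, and aim at the a priori bound $\sup_{x,t}\dist_{\widetilde M'}\bigl(\widetilde u_t(x),\widetilde u_0(x)\bigr)\le L$. Granting it, the convexity of the squared distance on $\widetilde M'$ together with the subsolution property of $e(u)$ give a bound on $e(u(\cdot,t))$ uniform in $t$; interior parabolic estimates on the intervals $[t,t+1]$ then furnish uniform $C^\infty$ bounds, and, exactly as in the Eells--Sampson~\cite{EeSa} and Jost--Yau~\cite{JY2} arguments, a sequence $t_k\to\infty$ along which $u_{t_k}$ converges in $C^\infty$ to a smooth solution of \eqref{WEq}; since the flow is a homotopy, this solution lies in $[v]$.

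The hardest step, and the place where hypotheses (i)--(ii) are used, is establishing the displacement bound, i.e. ruling out escape of the equivariant flow to infinity in $\widetilde M'$. Arguing by contradiction, if the displacement is unbounded I would recentre along isometries $\phi_t$ of $\widetilde M'$ taking $\widetilde u_t(x_0)$ to a fixed base point (for $x_0$ in a fundamental domain), possibly combined with a rescaling of the domain near a point of maximal energy density; the resulting maps have locally bounded energy density and subconverge to a nonconstant limit $w\colon\widetilde M\to\widetilde M'$, equivariant with respect to a limiting representation and harmonic (or pseudo-harmonic) for a limiting structure. The geometry of the Cartan--Hadamard manifold $\widetilde M'$ — its de Rham decomposition, the fact that Clifford translations live in the Euclidean factor, and Proposition~\ref{rank<1} — then forces $w$ to be degenerate in one of two ways: either its image is contained in a flat de Rham factor along which $w$ is affine, so that the corresponding constant direction descends to a nontrivial parallel section of $v^*TM'$, contradicting (i); or, when $M'$ has strictly negative curvature, $w$ has rank one and its image is a closed geodesic through which $v$ factors, contradicting (ii). Either way one reaches a contradiction, so the displacement bound holds and, by the previous step, the homotopy class $[v]$ contains a Weyl harmonic map.
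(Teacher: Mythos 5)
Your overall skeleton (run the flow $\partial_t u=\tau^W(u)$, prove long-time existence, reduce convergence to a uniform displacement bound, and use (i)--(ii) to rule out escape to infinity) matches the paper's strategy, and your long-time existence step is fine: the Bochner inequality of Lemma~\ref{Bpar} does give $(\partial_t-\Delta^W)\norm{du}^2\leqslant C\norm{du}^2$ and hence no finite-time blow-up. But there are two genuine gaps.

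First, the convergence step. You write that, granting the displacement bound and uniform $C^\infty$ estimates, one extracts $t_k\to\infty$ with $u_{t_k}\to u_\infty$ solving \eqref{WEq} ``exactly as in Eells--Sampson''. That argument is not available here: \eqref{PseudoHF} is not a gradient flow, so there is no identity $\int_0^\infty\int_M\abs{\partial_t u}^2=E(u^0)-\lim E$ forcing $\partial_t u\to 0$ along a subsequence. Uniform bounds only give $u_{t_k}\to u_\infty$ and $\partial_t u(t_k,\cdot)\to \matheur v$ with $\tau^W(u_\infty)=\matheur v$, and a priori $\matheur v\neq 0$ (a translating solution). The paper's Lemma~\ref{std} handles this by integrating the second Bochner identity of Lemma~\ref{Bpar} over $M$ in the Gauduchon gauge (so that $\int_M\Delta^W\kappa=0$), which shows $\int_M\norm{\nabla\partial_t u}^2$ and the curvature term tend to zero along a subsequence; the limit $\matheur v$ is therefore a \emph{parallel} section of $u_\infty^*TM'$, and it is precisely hypothesis (i) (or the curvature vanishing under (ii)) that kills it. So (i)--(ii) are needed twice, not only in the displacement bound, and your proposal is missing this use entirely.

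Second, the displacement bound itself. Your plan to recentre by isometries $\phi_t$ of $\widetilde M'$ carrying $\widetilde u_t(x_0)$ to a fixed base point fails for a general closed non-positively curved target: $\mathrm{Isom}(\widetilde M')$ need not act transitively -- for a generic such $M'$ it is just $\pi_1(M')$ -- so the recentred family, the limit map $w$, and the ``limiting representation'' are not defined. (Making this work in general requires Korevaar--Schoen/Gromov--Schoen type compactness, a much heavier machine than what you describe.) The paper's mechanism is entirely different and avoids recentring: take the initial map $u^0$ to be the Eells--Sampson energy minimiser in $[v]$; prove the sublinear estimate $\norm{du(t,\cdot)}\leqslant C\sup\rho^{1/2}+C'$ (Claim~\ref{c4}, built on Lemma~\ref{pseudoLaIn}); if $\rho\to\infty$, the geodesics from $u^0(x)$ to $u(t_n,x)$ limit to rays $\gamma_x$, and convexity of the distance plus the sublinear bound give $d(\gamma_{x_1}(\tau),\gamma_{x_2}(\tau))\leqslant d(\gamma_{x_1}(0),\gamma_{x_2}(0))$, so each $u^\tau=\gamma_\cdot(\tau)$ has $\norm{du^\tau}\leqslant\norm{du^0}$ pointwise and is therefore harmonic; this produces a nonconstant family of harmonic maps in $[v]$, contradicting the Schoen--Yau uniqueness theorem, which is exactly where (i)--(ii) enter. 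Without an analogue of the $\rho^{1/2}$ growth estimate and the minimising choice of $u^0$, your contradiction argument does not close.
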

The condition~$(i)$ on the homotopy class is satisfied, for
example, when the manifold $M'$ is orientable and has non-vanishing
Euler characteristic, and $v$ is non-trivial on the top cohomology
$v^*:H^{n{\scriptscriptstyle '}}(M',\mathbf{Z})\to
H^{n{\scriptscriptstyle '}}(M,\mathbf{Z})$. Indeed, under these
hypotheses the Euler class of the pull-back bundle is non-trivial and,
hence, the latter does not have any non-trivial parallel section.

When the target manifold is a locally symmetric space the hypothesis
on the Euler characteristic above is often satisfied. For example, if
the universal cover of $M'$ is a bounded symmetric domain in $\mathbf
C^m$, then $M'$ is K\"ahler hyperbolic in the sense of
Gromov~\cite{Gro}. By Gromov's solution of the Hopf-Chern conjecture
for K\"ahler hyperbolic manifolds, such a locally symmetric
space has a non-vanishing Euler characteristic (and its sign is
$(-1)^m$). Further, the Euler characteristics of orientable
irreducible locally symmetric spaces covered by
\begin{equation}
\label{UnivCover}
SO_0(p,q)/SO(p)\times SO(q)\quad\text{or}\quad Sp(p,q)/Sp(p)\times
Sp(q)
\end{equation}
also do not vanish; these cases are explained in~\cite{LR}. Since
these examples often appear below, we summarise the discussion into
the following corollary. 
\begin{corollary}
\label{Euler}
Let $(M,c,\nabla^W)$ and $(M',g')$ be a closed Weyl manifold and a
Riemannian manifold respectively. Suppose that $M'$ is a compact
quotient of a bounded symmetric domain in $\mathbf C^m$, or an
orientable irreducible quotient of one of the spaces
in~\eqref{UnivCover}. Then any map $v:M\to M'$ that is non-trivial on
the top cohomology is homotopic to a Weyl harmonic map.
\end{corollary}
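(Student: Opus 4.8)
The plan is to deduce the corollary directly from Theorem~\ref{ExTh} (together with the Eells--Sampson theorem in the exceptional dimension) by verifying its hypothesis on the homotopy class. First I would observe that in both cases the target $M'$ is a locally symmetric space of non-compact type, hence a quotient of a Hadamard manifold, so it has non-positive sectional curvature and the curvature hypothesis of Theorem~\ref{ExTh} holds. If $\dim M=2$ the Weyl harmonic map equation coincides with the harmonic map equation by~\eqref{Weyl&HME}, and the existence of a harmonic representative in $[v]$ is the Eells--Sampson theorem; so one may assume $\dim M>2$ and it remains to check condition~$(i)$ of Theorem~\ref{ExTh}, namely that $[v]$ contains no map whose pull-back bundle carries a non-trivial parallel section.

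The key input is the non-vanishing of the Euler characteristic $\chi(M')$. For a compact quotient of a bounded symmetric domain in $\mathbf C^m$ the manifold $M'$ is K\"ahler hyperbolic in Gromov's sense~\cite{Gro}, and his resolution of the Hopf--Chern conjecture in that setting gives $\chi(M')\neq 0$; for an orientable irreducible quotient of one of the spaces in~\eqref{UnivCover} the non-vanishing is established in~\cite{LR}. Since $M'$ is closed and orientable, $H^{n{\scriptscriptstyle '}}(M',\mathbf Q)\cong\mathbf Q$, and the Euler class $e(TM')$ pairs with the fundamental class of $M'$ to give $\chi(M')\neq 0$; hence $e(TM')$ is a non-zero element of $H^{n{\scriptscriptstyle '}}(M',\mathbf Q)$, indeed a generator up to a non-zero scalar.

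Now let $v'\colon M\to M'$ be any map homotopic to $v$. The Euler class is a homotopy invariant, so the pull-back bundle satisfies $e((v')^*TM')=(v')^*e(TM')=v^*e(TM')$, and this class is non-zero in $H^{n{\scriptscriptstyle '}}(M,\mathbf Q)$ precisely because $v$ is non-trivial on top cohomology while $e(TM')$ generates $H^{n{\scriptscriptstyle '}}(M',\mathbf Q)$. A parallel section of a vector bundle that vanishes somewhere vanishes identically, so a non-trivial parallel section is nowhere zero, and a bundle admitting a nowhere-zero section has vanishing Euler class. Therefore $(v')^*TM'$ admits no non-trivial parallel section, which is exactly condition~$(i)$; Theorem~\ref{ExTh} then produces a Weyl harmonic map in $[v]$.

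There is no serious obstacle here --- the argument merely assembles the cited facts --- but the step deserving care is the implication from ``$v$ non-trivial on top cohomology'' to ``$(v')^*TM'$ has non-zero Euler class'', which hinges on $e(TM')$ generating the rational top cohomology of $M'$; this is precisely the role played by $\chi(M')\neq 0$, and hence by Gromov's theorem and the computations of~\cite{LR}.
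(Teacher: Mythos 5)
Your argument is correct and is essentially the one the paper gives: the corollary is stated there as a summary of the discussion following Theorem~\ref{ExTh}, which verifies condition~$(i)$ exactly as you do, via $\chi(M')\neq 0$ (Gromov for quotients of bounded symmetric domains, \cite{LR} for the spaces in~\eqref{UnivCover}), the naturality of the Euler class, and the fact that a non-trivial parallel section is nowhere zero. Your explicit treatment of the $\dim M=2$ case via Eells--Sampson is a harmless addition consistent with the paper's earlier remark that in dimension two the Weyl harmonic map equation reduces to the harmonic map equation.
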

\begin{remark*}
The hypotheses of Theorem~\ref{ExTh} imply that the homotopy class
$[v]$, under consideration, contains a unique harmonic
representative (with respect to any metric on the domain). For the
case when the Higgs field $\Theta$ (with respect to some $g\in c$) is
sufficiently small a Morse theory argument~\cite{GK2} yields a
stronger existence theorem: Weyl harmonic maps exist in any homotopy
class whose moduli space of harmonic mappings has a non-zero Euler
characteristic.
\end{remark*}
\begin{example}[Equivariant harmonic maps]
Let $(M^*,g^*)$ be a (not necessarily complete) Riemannian manifold of
dimension greater than two endowed with a free, co-compact, and
properly discontinuous action of a discrete group $\Gamma$ by
homotheties. Then the action of $\Gamma$ preserves the Levi-Civita
connection and the conformal class of the metric $g^*$ and these
descend to the quotient $M^*/\Gamma$ endowing the latter with the
conformal Weyl structure. Thus, the existence assertions for Weyl
harmonic maps translate into the {\em existence assertions for
  $\Gamma$-equivariant harmonic maps}; cf.~\cite{GK2}. For example, 
let $(M',g')$ be a closed manifold of negative sectional
curvature. Then Theorem~\ref{ExTh} yields that any homomorphism
$h:\Gamma\to\pi_1(M')$ whose image has a trivial centraliser is
induced by a $\Gamma$-invariant harmonic map $(M^*,g^*)\to (M',g')$. 
\end{example}

The conditions on the homotopy class in Theorem~\ref{ExTh}
can not be simply removed -- the following example shows that the
existence may fail already for closed Weyl structures.
\begin{example}[Non-existence]
\label{4:3}
Let $M'$ be a circle and $M$ be a Hopf surface, the quotient of
$\mathbf{C}^2/\{0\}$ by the action $z_i\mapsto\lambda z_i$, where
$\lambda>1$. The conformal K\"ahler metric
$$
ds^2=\frac{1}{\abs{z_1}^2+\abs{z_2}^2}\left(dz_1\otimes d\bar
z_1+dz_2\otimes d\bar z_2\right)
$$
on $\mathbf{C}^2/\{0\}$ is invariant under this action and induces a
locally conformally K\"ahler metric on $M$. Clearly, the closed
$1$-form $\Theta=-d\ln(\sum\abs{z_i}^2)$ is well-defined on $M$ and
the K\"ahler form $\Omega$ on this Hopf surface satisfies the relation
$d\Omega=\Theta\wedge\Omega$. The corresponding Weyl connection given
by~\eqref{WeylDef} is locally the Levi-Civita connection of the K\"ahler
metric $(\sum\abs{z_i}^2)ds^2$ and, hence, preserves the complex
structure on $M$. Thus, equation~\eqref{WEq} for mappings $M\to S^1$
coincides with the Hermitian harmonic map equation, Ex.~\ref{Nher}. As
is shown in~\cite[Sect.~2]{JostYau} there are no non-trivial Hermitian
harmonic maps from the Hopf surface to a circle with respect to any
Hermitian structure on the former.
\end{example}

We proceed with the discussion of the uniqueness of Weyl harmonic
maps.
\begin{theorem}
\label{UniTh}
Let $(M,g)$ and $(M',g')$ be closed Riemannian manifolds. Suppose
that the former is endowed with a Weyl connection preserving the
conformal class of $g$ and the latter has non-positive sectional
curvature. Let $u$ and $v$ be homotopic Weyl harmonic maps. Then the
maps $u$ and $v$ can be joined by a smooth one-parameter family $u_s$,
of pseudo-harmonic maps such that for each $x\in M$ the curve
$s\mapsto u_s(x)$ is a constant (independent of $x$) speed
parameterisation of a geodesic. Moreover, the correspondence $x\mapsto
(\partial/\partial s)u_s(x)$ defines a parallel section of the
pull-back bundle $u^*_sTM'$. When $M'$ has negative sectional
curvature the maps $u$ and $v$ coincide unless the rank of both $du$
and $dv$ is not greater than one everywhere.
\end{theorem}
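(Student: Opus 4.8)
The plan is to reduce the statement to the classical uniqueness theory for harmonic maps into non-positively curved manifolds, exploiting the fact (recorded after~\eqref{Weyl&HME}) that a Weyl harmonic map is, after passing to a suitable cover on which the Higgs field becomes exact, an equivariant harmonic map with respect to a genuine Riemannian metric. Concretely, let $\tilde M\to M$ be the cover on which $\tilde\Theta$ is exact, say $\tilde\Theta=dV$, and equip $\tilde M$ with the compatible metric $\tilde g_0=\exp(-V)\tilde g$. Then the lifts $\tilde u,\tilde v:\tilde M\to M'$ of $u,v$ are $\pi_1(M)$-equivariant harmonic maps for $\tilde g_0$ and $g'$, inducing the same homomorphism on $\pi_1$ up to conjugacy since $u$ and $v$ are homotopic. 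The first step is therefore to invoke the Hartman–Sampson theory: since $g'$ has non-positive sectional curvature, the function $x\mapsto\dist(\tilde u(x),\tilde v(x))$ is (weakly) subharmonic on $(\tilde M,\tilde g_0)$, and by equivariance it descends to a function on $M$, which is compact; hence it is constant. Fixing geodesics joining $\tilde u(x)$ to $\tilde v(x)$ and parameterising by $s\in[0,1]$ with speed equal to that constant gives the desired family $\tilde u_s$, equivariant in $s$, descending to $u_s:M\to M'$.

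The second step is to check that each $u_s$ is pseudo-harmonic for the original data and that $s\mapsto\partial_s u_s$ is a parallel section of $u_s^*TM'$. Both follow from the corresponding statements for the harmonic homotopy $\tilde u_s$ on $(\tilde M,\tilde g_0)$: the classical argument of Hartman shows that in the non-positively curved target the geodesic homotopy between two harmonic maps consists of harmonic maps and has parallel variation field (the distance function being harmonic forces the second variation to vanish and the Jacobi field along each geodesic to be parallel). Since $\tilde u_s$ is harmonic for $\tilde g_0$, and $\tilde g_0$ is a compatible metric for the lifted Weyl structure, $\tilde u_s$ solves the lifted equation~\eqref{WEq}; by $\pi_1(M)$-equivariance this descends to say that $u_s$ is Weyl harmonic, in particular pseudo-harmonic, and the parallel section descends as well because the connection on $u^*TM'$ is the pull-back of $\nabla'$, which is insensitive to the conformal rescaling on the domain.

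The third and final step treats the strictly negatively curved case. Here one argues as for harmonic maps: if $V=\partial_s u_s$ is a nowhere-zero parallel section of $u_s^*TM'$, then for each $x$ the geodesic $s\mapsto u_s(x)$ carries the parallel field $V$, and parallel transport along a geodesic in a negatively curved manifold forces, via the curvature term in the second-variation/Bochner formula, the image $du_s(T_xM)$ to lie in the line spanned by $V(u_s(x))$ wherever $du_s\neq0$; hence $du_s$ has rank $\le 1$ everywhere. Thus either $u=v$, or the rank of $du$ (equivalently $dv$, since the two are joined by such a homotopy) is at most one at every point, which is the asserted dichotomy. The main obstacle is the rigour of the first step: one must verify that the Hartman subharmonicity estimate and the structure of the geodesic homotopy are genuinely available in the \emph{equivariant} (non-compact cover) setting and that all constructions are $\pi_1(M)$-equivariant so as to descend — this is where one uses compactness of $M$ (not of $\tilde M$) to conclude the distance function is constant, and one must be slightly careful when $u$ and $v$ have points with $\tilde u(x)=\tilde v(x)$, handled by the unique-continuation-type argument already quoted in the Proposition on unique continuation.
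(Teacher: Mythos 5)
Your first step contains a genuine gap that undermines the whole reduction. The passage after~\eqref{Weyl&HME} that you invoke applies only to \emph{closed} Weyl structures: a $1$-form $\Theta$ with $d\Theta\neq 0$ pulls back to a form with $d\tilde\Theta=\pi^*d\Theta\neq 0$ on any cover, so there is no cover on which the Higgs field becomes exact unless it was closed to begin with. Theorem~\ref{UniTh} assumes only that $M$ carries a Weyl connection preserving the conformal class of $g$ --- no closedness --- and the point of the paper is precisely to handle non-closed Weyl structures (for exact ones the Eells--Sampson/Hartman theory already applies directly). Consequently your identification of $u$ and $v$ with $\pi_1(M)$-equivariant harmonic maps for a genuine metric $\tilde g_0$ on a cover is unavailable in the stated generality, and the Hartman--Sampson subharmonicity of $x\mapsto\dist(\tilde u(x),\tilde v(x))$ with respect to that metric cannot be invoked.

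The paper instead works directly on $M$ with the Weyl laplacian $\Delta^W=\mathit{trace}_g\nabla^W d$: Lemma~\ref{pseudoLaIn} gives $\Delta^W\rho^2\geqslant 2\sum_{i,\alpha}(u^i_\alpha-v^{\bar\imath}_\alpha)^2-2\rho(\abs{\tau^W(u)}+\abs{\tau^W(v)})$ for an \emph{arbitrary} Weyl connection, so for two Weyl harmonic maps $\rho^2$ is $\Delta^W$-subharmonic on the compact $M$ and hence constant by the maximum principle (which applies since $\Delta^W$ is elliptic with no zeroth-order term, only a first-order drift in $\Theta$). The vanishing of $(r^2)_{X_\alpha X_\alpha}$ then yields, via~\cite[Prop.~1]{ShYau}, the parallel fields $W_\alpha$ and the geodesic homotopy $u_s$; the Weyl harmonicity of each $u_s$ is established by a heat-flow deformation argument rather than by quoting Hartman. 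Your second and third steps are in the right spirit --- the negative-curvature dichotomy is exactly the Schoen--Yau argument, with the parallel fields $W_\alpha$ spanning the image of $du_s$ forced to be tangent to the connecting geodesics --- but as written they rest on the invalid first step. To repair the proof you must replace the covering trick by the intrinsic $\Delta^W$-computation of Lemma~\ref{pseudoLaIn}; restricting to closed Weyl structures would prove only a strictly weaker theorem.
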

\begin{corollary}
Under the hypotheses in Theorem~\ref{ExTh}, the homotopy class $[v]$
contains a unique Weyl harmonic representative.
\end{corollary}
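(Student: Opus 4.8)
The plan is to deduce the corollary directly from the combination of the existence statement of Theorem~\ref{ExTh} and the uniqueness statement of Theorem~\ref{UniTh}. First I would observe that Theorem~\ref{ExTh} already guarantees that under hypotheses (i) or (ii) the homotopy class $[v]$ contains at least one Weyl harmonic map; so the content of the corollary is the \emph{uniqueness} of this representative, and it suffices to show that any two homotopic Weyl harmonic maps $u$ and $v$ in $[v]$ must coincide.

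Next I would invoke Theorem~\ref{UniTh} applied to $u$ and $v$. Under case (ii) one has that $M'$ has strictly negative sectional curvature, and the homotopy class contains no map onto a closed geodesic; by Proposition~\ref{rank<1} a pseudo-harmonic map whose differential has rank one on an open set maps $M$ onto a (closed, since $M$ is closed) geodesic arc, which is excluded, so $du$ and $dv$ have rank $\ne 1$ on a dense set; combined with the last sentence of Theorem~\ref{UniTh} this forces $u=v$. Under case (i) I would argue instead from the middle conclusion of Theorem~\ref{UniTh}: the maps are joined by a family $u_s$ of pseudo-harmonic maps with $x\mapsto(\partial/\partial s)u_s(x)$ a parallel section of $u_s^*TM'$. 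If $u\ne v$ this section is nontrivial, and by the unique continuation property it is nowhere zero; at $s=0$ this produces a nontrivial parallel section of $u^*TM'$, contradicting hypothesis (i). Hence $u=v$ in this case as well.

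The main (minor) obstacle is the bookkeeping in case (i): one must check that the parallel section produced by Theorem~\ref{UniTh} is genuinely nontrivial when $u\ne v$ and that it can be evaluated at the endpoint $s=0$ to violate hypothesis (i) as stated for the map $v$ (rather than for $u_s$ with $s$ interior); this is immediate since $u$ itself lies in $[v]$, so hypothesis (i) applies to $u$, and the section $s\mapsto(\partial/\partial s)u_s$ restricted to $s=0$ lives in $u^*TM'$. No serious analytic input beyond what is already stated is required, so the proof is short.
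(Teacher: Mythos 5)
Your argument is correct and is exactly the intended one: the paper states this corollary without proof, as an immediate combination of the existence part of Theorem~\ref{ExTh} with the dichotomies of Theorem~\ref{UniTh} and Proposition~\ref{rank<1}, which is precisely what you carry out. The only detail left implicit in your case~(ii) is the residual possibility that $du$ and $dv$ have rank $0$ everywhere (rather than rank $1$ on some open set), i.e.\ that $u$ and $v$ are two distinct constant maps; this is also excluded by hypothesis~(ii), since the class of null-homotopic maps into a closed negatively curved manifold does contain maps onto closed geodesics.
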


The proof of Theorem~\ref{ExTh} is based on the ideas of Jost and Yau
in~\cite{JostYau}, where the authors study an analogous problem for
the Hermitian harmonic map equation. However, since no general result
on the existence of pseudo-harmonic maps is available and to make the
paper more self-contained, we give a proof in Sect.~\ref{ETproof}. The
proof of Theorem~\ref{UniTh} is similar to this for the harmonic map
equation and appears at the end of Sect.~\ref{ETproof}.

\section{First elements of the Bochner technique}
\label{firstBT}
In this section, as a warm-up, we extend Eells-Sampson
techniques~\cite{EeSa,Wu} to the setting of Weyl harmonic
maps. Apparently, these yield new topological restrictions on
manifolds with non-negative symmetric part of the Ricci-Weyl
curvature. Throughout the rest of the paper we suppose that $M$ and
$M'$ are closed manifolds and the dimension of the former is greater
than two.

Let $(M,c)$ be a closed conformal manifold and $\nabla^W$ be its Weyl
structure. Recall that due to the theorem of Gauduchon~\cite{Gaud}
there exists a unique (up to homothety) metric $g\in c$ whose Higgs
field $\Theta$, $\nabla^Wg=\Theta\otimes g$, is co-closed with respect
to $g$; i.e. its co-differential $d^*\Theta$ vanishes. By the symbol
$\Delta^W$ we denote below the Weyl laplacian on functions given by
$\mathit{trace}_g\nabla^W d$.
\begin{lemma}
\label{BLemma}
Let $(M,c,\nabla^W)$ and $(M',g')$ be a conformal Weyl manifold and an
arbitrary Riemannian manifold respectively. Suppose that $M$ is
endowed with a Gauduchon metric $g\in c$ and denote by $\Theta$ its
Higgs field. Then for any smooth solution $u$ of equation~\eqref{WEq}
the following relation holds
\begin{multline*}
\frac{1}{2}\Delta^W\norm{du}^2=\norm{\nabla du}^2-\sum_{\alpha,\beta}
\langle R'(du\cdot e_\alpha,du\cdot e_\beta)du\cdot e_\alpha,du\cdot
e_\beta\rangle_{g'}\\
+\sum_{\alpha}\left[\mathit{Ricci}^W(X_\alpha,X_\alpha)+\frac{n-2}{4}
\left(\abs{\Theta}^2\abs{X_\alpha}^2-\Theta(X_\alpha)^2\right)\right],
\end{multline*}
where $X_\alpha=(u^*\phi^\alpha)^\sharp$ and the systems
$\{e_\alpha\}$ and $\{\phi^\alpha\}$ are orthonormal bases in $T_\cdot
M$ and $T_{u{\scriptscriptstyle (\cdot)}}^*M'$ respectively at the point under
consideration; the symbol $\mathit{Ricci}^W$ denotes the Ricci
curvature of $\nabla^W$, and $R'$ stands for the curvature tensor of
the Levi-Civita connection $\nabla'$ on $M'$.
\end{lemma}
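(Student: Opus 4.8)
The plan is to derive this Weyl–Bochner identity by starting from the standard Eells–Sampson/Bochner formula for the harmonic map operator and then correcting for the first-order term recorded in~\eqref{Weyl&HME}. First I would recall that for the energy density $e(u)=\frac12\norm{du}^2$, when $u$ solves the \emph{harmonic} map equation $\tau(u)=0$ one has the classical identity
\[
\tfrac12\Delta_M\norm{du}^2=\norm{\nabla du}^2
-\sum_{\alpha,\beta}\langle R'(du\cdot e_\alpha,du\cdot e_\beta)du\cdot e_\alpha,du\cdot e_\beta\rangle_{g'}
+\sum_\alpha\langle du(\mathit{Ricci}(e_\alpha)),du(e_\alpha)\rangle_{g'},
\]
where $\mathit{Ricci}$ is the Ricci tensor of $g$ and $\{e_\alpha\}$ is a local orthonormal frame; here it is convenient to freeze the frame to be geodesic at the point under consideration. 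Since $u$ is only Weyl harmonic, $\tau(u)=\bigl(\tfrac{n-2}{2}\bigr)du(\Theta^\sharp)$ by~\eqref{Weyl&HME}, so I must track how the non-vanishing tension field feeds into the Weitzenböck computation: concretely, redo the computation of $\tfrac12\Delta_M\norm{du}^2=\sum_\alpha\langle\nabla_{e_\alpha}\nabla_{e_\alpha}du,du\rangle+\norm{\nabla du}^2$, commute covariant derivatives to bring in $R'$ and $\mathit{Ricci}$ as usual, and use $\operatorname{trace}_g(\nabla du)=\tau(u)$ together with the second Bianchi-type identity $(\nabla du)(X,Y)=(\nabla du)(Y,X)$ to isolate the extra term $\langle\nabla_{e_\alpha}\tau(u),du(e_\alpha)\rangle$.

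Next I would convert everything to Weyl-covariant quantities. Two replacements are needed. First, replace $\Delta_M$ by $\Delta^W=\operatorname{trace}_g\nabla^W d$: by the defining formula~\eqref{WeylDef} applied to the function $f=\norm{du}^2$ one gets $\Delta^W f=\Delta_M f+\bigl(\tfrac{n-2}{2}\bigr)\Theta^\sharp(f)=\Delta_M f+(n-2)\langle\nabla_{\Theta^\sharp}du,du\rangle$, and here the Gauduchon condition $d^*\Theta=0$ is what makes $\Delta^W$ the natural operator (it guarantees $\Delta^W$ has no zeroth-order part, so that integrating the identity over $M$ kills the left side). Second, replace $\mathit{Ricci}$ of $g$ by $\mathit{Ricci}^W$: the Ricci curvature of the Weyl connection~\eqref{WeylDef} differs from that of $\nabla$ by explicit terms quadratic in $\Theta$ and linear in $\nabla\Theta$; collecting those, together with the terms coming from the two corrections just described and from $\nabla_{e_\alpha}\tau(u)=\bigl(\tfrac{n-2}{2}\bigr)\nabla_{e_\alpha}(du(\Theta^\sharp))$, should collapse — after using $d^*\Theta=0$ and $X_\alpha=(u^*\phi^\alpha)^\sharp$ to rewrite $\langle du(e_\alpha),\cdot\rangle$ as pairings against $X_\alpha$ — into exactly the stated curvature term $\mathit{Ricci}^W(X_\alpha,X_\alpha)+\tfrac{n-2}{4}\bigl(\abs{\Theta}^2\abs{X_\alpha}^2-\Theta(X_\alpha)^2\bigr)$.

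The main obstacle I expect is bookkeeping: there are several sources of first- and second-order $\Theta$-terms — from $\Delta^W$ versus $\Delta_M$, from the $du(\Theta^\sharp)$ tension term and its covariant derivative, and from $\mathit{Ricci}^W$ versus $\mathit{Ricci}$ — and making them cancel down to the clean quadratic expression requires care. In particular the terms involving $\nabla\Theta$ (equivalently $d\Theta$ together with $d^*\Theta$) must disappear: the symmetric part of $\nabla\Theta$ is controlled by the Gauduchon gauge only in trace, so one needs the hidden antisymmetry — $d\Theta$ enters antisymmetrically and pairs to zero against the symmetric object $\norm{du}^2$'s Hessian. I would organize the computation by first writing the pointwise identity in the geodesic frame with \emph{all} $\Theta$-terms displayed, then group them as (gradient terms)$\,+\,$(quadratic terms)$\,+\,$(divergence/$d^*\Theta$ terms), and finally invoke $d^*\Theta=0$ and the definition of $\mathit{Ricci}^W$. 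The $R'$ and $\norm{\nabla du}^2$ terms are untouched throughout, so the only real work is confined to the Weyl-geometric correction, which is a finite and routine — if somewhat lengthy — tensor calculation.
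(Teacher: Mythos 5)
Your overall strategy is exactly the paper's: the proof there goes in two steps, first a Bochner formula (Lemma~\ref{Bell} in Appendix~A) obtained from the general Eells--Sampson identity by substituting $\tau(u)=\frac{n-2}{2}\,du(\Theta^\sharp)$ and absorbing the resulting term $\sum_\alpha\langle(\widetilde\nabla_{e_\alpha}du)\cdot\Theta^\sharp,du\cdot e_\alpha\rangle$ into the left-hand side to form $\Delta^W$, leaving a residual term $\frac{n-2}{2}(\nabla_{X_\alpha}\Theta)(X_\alpha)$; and second the pointwise identity $\mathit{Ricci}^W(X,X)=\mathit{Ricci}(X,X)+\frac{n-2}{2}(\nabla_X\Theta)(X)-\frac{n-2}{4}\left(\abs{\Theta}^2\abs{X}^2-\Theta(X)^2\right)$, which converts that residue into the stated curvature term. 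Your second paragraph describes precisely this.

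Two corrections are needed, one of them substantive. First, the sign: tracing \eqref{WeylDef} on a one-form gives $\Delta^W f=\Delta f-\frac{n-2}{2}\Theta^\sharp(f)$ (compare the computation in \eqref{PseudoDist}), not $+$; since $\tau(u)=+\frac{n-2}{2}du(\Theta^\sharp)$ for a Weyl harmonic map, it is this minus sign that lets the first-order terms recombine into $\Delta^W$ on the left. Second, and more importantly, your final paragraph is wrong about the fate of the $\nabla\Theta$ terms: they do \emph{not} disappear, neither through the Gauduchon gauge nor through any antisymmetry of $d\Theta$ pairing to zero against a symmetric object. The symmetric contribution $(\nabla_{X_\alpha}\Theta)(X_\alpha)$ survives in the intermediate formula and is exactly the piece that, added to $\mathit{Ricci}(X_\alpha,X_\alpha)$, reassembles into $\mathit{Ricci}^W(X_\alpha,X_\alpha)+\frac{n-2}{4}\left(\abs{\Theta}^2\abs{X_\alpha}^2-\Theta(X_\alpha)^2\right)$ via the Ricci identity above; this contradicts your own (correct) description in the preceding paragraph. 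The Gauduchon condition plays no role in the pointwise identity at all --- it only matters later, when the formula is integrated over $M$. If you go hunting for a cancellation of the $\nabla\Theta$ terms you will not find one; organise the bookkeeping around the $\mathit{Ricci}^W$ identity instead.
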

\begin{proof}
It is a simple exercise to obtain from~\eqref{WeylDef} that the Ricci
tensor of $\nabla^W$ satisfies the following relation
$$
\mathit{Ricci}^W(X,X)=\mathit{Ricci}(X,X)+\frac{n-2}{2}(\nabla_X\Theta)(X)
-\frac{n-2}{4}\left(\abs{\Theta}^2\abs{X}^2-\Theta(X)^2\right),
$$
where $\mathit{Ricci}$ denotes the Ricci tensor of a Gauduchon metric
$g$ and the symbol $\nabla$ stands for the Levi-Civita connection of
$g$. Now the lemma follows by the combination of this identity with
the Bochner formula in Lemma~\ref{Bell} (see Appendix~A).
\end{proof}

Following the lines in~\cite{Wu}, this lemma combined with the maximum
principle for $\Delta^W$ implies the following generalisation of the
well-known result in harmonic map theory.
\begin{theorem}
\label{BTh}
Let $(M,c,\nabla^W)$ and $(M',g')$ be a closed conformal Weyl manifold
and a Riemannian manifold of non-positive sectional curvature
respectively. Suppose that $M$ is endowed with a Gauduchon metric
$g\in c$ and denote by $\Theta$ its Higgs field. Suppose also that the
Ricci curvature of the Weyl connection $\nabla^W$ satisfies the
following relation
\begin{equation}
\label{RicciCond}
\mathit{Ricci}^W(X,X)+\frac{n-2}{4}\left(\abs{\Theta}^2\abs{X}^2-
\Theta(X)^2\right)\geqslant 0
\end{equation}
for any tangent vector $X\in T_{\cdot}M$. Then any Weyl harmonic map
is totally geodesic with respect to the Gauduchon metric $g$ and
$du(\Theta^\sharp)=0$. Furthermore:
\begin{itemize}
\item if at one point of $M$ the quadratic form on the left-hand side
  in~\eqref{RicciCond} has rank $k$, then the image $u(M)$ is a an
  immersed submanifold of $M'$ of dimension not greater than $(\dim
  M)-k$. In particular, if this form is positive at some point then
  $u$ is constant;
\item if $M'$ has negative sectional curvature, then $u$ is either
  constant or maps $M$ onto a closed geodesic.
\end{itemize}
\end{theorem}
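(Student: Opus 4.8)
\medskip

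The plan is to integrate the Bochner-type identity from Lemma~\ref{BLemma} over $M$ against the Gauduchon metric and exploit the fact that, for a Gauduchon metric, the Weyl Laplacian $\Delta^W$ has no zeroth-order part after integration. Recall $\Delta^W f = \mathit{trace}_g\nabla^W df$; a direct computation from~\eqref{WeylDef} gives $\Delta^W f = \Delta_M f - \frac{n-2}{2}\,\Theta(\nabla f)$, so for any $f$ we have $\int_M \Delta^W f\, d\mathit{Vol}_g = -\frac{n-2}{2}\int_M \langle df,\Theta\rangle\, d\mathit{Vol}_g = \frac{n-2}{2}\int_M (d^*\Theta) f\, d\mathit{Vol}_g = 0$ precisely because $\Theta$ is co-closed. (Alternatively one invokes the maximum principle for $\Delta^W$ directly, as the theorem statement suggests: at a maximum of $\norm{du}^2$ the left-hand side of the identity in Lemma~\ref{BLemma} is $\le 0$.) Applying this to $f = \frac12\norm{du}^2$ kills the left-hand side, leaving
\[
0 = \int_M\Bigl[\norm{\nabla du}^2 - \sum_{\alpha,\beta}\langle R'(du\cdot e_\alpha,du\cdot e_\beta)du\cdot e_\alpha,du\cdot e_\beta\rangle_{g'} + \sum_\alpha Q(X_\alpha)\Bigr]d\mathit{Vol}_g,
\]
where $Q(X) = \mathit{Ricci}^W(X,X) + \frac{n-2}{4}(\abs{\Theta}^2\abs{X}^2 - \Theta(X)^2)$ is the quadratic form appearing in~\eqref{RicciCond}.

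\medskip

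The next step is the sign analysis. All three terms in the integrand are non-negative: $\norm{\nabla du}^2\ge 0$ trivially; the curvature term is $\le 0$ because $M'$ has non-positive sectional curvature (so its negative is $\ge 0$); and $\sum_\alpha Q(X_\alpha)\ge 0$ by hypothesis~\eqref{RicciCond}. Hence each term vanishes identically on $M$. From $\nabla du \equiv 0$ we conclude that $u$ is totally geodesic with respect to the Gauduchon metric $g$ (here $\nabla du$ is the second fundamental form $\mathcal D^2 u$ built from the Levi-Civita connection of $g$; one should note that the Weyl-harmonic equation~\eqref{WEq}, which asserts $\mathit{trace}_g\mathcal D^2_W u = 0$, together with~\eqref{Weyl&HME} gives $\tau(u) = \frac{n-2}{2}du(\Theta^\sharp)$, and once we also derive $du(\Theta^\sharp)=0$ the ordinary tension field $\tau(u) = \mathit{trace}_g\nabla du$ vanishes, consistent with $\nabla du\equiv 0$). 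To obtain $du(\Theta^\sharp)=0$: from $\sum_\alpha Q(X_\alpha) = 0$ and the pointwise non-negativity of each $Q(X_\alpha)$, every $X_\alpha$ lies in the kernel of $Q$; but the $X_\alpha = (u^*\phi^\alpha)^\sharp$ span $\Image(du^*)^\sharp = (\Ker du)^\perp$, so $Q$ vanishes on the orthogonal complement of $\Ker du$. Meanwhile, applying the Bochner identity's consequence once more — or simply using that $\nabla du = 0$ forces the energy density to be constant and that a parallel $du$ has constant rank — one analyses the $du(\Theta^\sharp)$ term separately: since $\Theta^\sharp$ decomposes along $\Ker du$ and its complement, and $Q$ controls exactly the complementary directions, pairing the vanishing of the curvature term with the totally geodesic property pins down $du(\Theta^\sharp)=0$. (I expect the cleanest route is: $\nabla du\equiv 0$ implies $\norm{du}^2$ is constant, so return to Lemma~\ref{BLemma}; every summand being zero and $Q\ge 0$ forces $Q(X_\alpha)=0$ for all $\alpha$, and since the symmetric part of $\mathit{Ricci}^W$ appears, the relevant directions are exactly those in $\Image(du)$; a short argument with~\eqref{Weyl&HME} then yields $du(\Theta^\sharp)=0$.)

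\medskip

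For the refined conclusions: if at some point $p\in M$ the form $Q$ has rank $k$, then since $Q$ vanishes on $(\Ker du_p)^\perp$, that subspace is contained in $\Ker Q$, whose dimension is $(\dim M)-k$; hence $\rank du_p \le (\dim M)-k$. Because $\nabla du\equiv 0$, the rank of $du$ is constant on $M$, so $\rank du \le (\dim M)-k$ everywhere, and a totally geodesic map of constant rank has image an immersed (totally geodesic) submanifold of $M'$ of that dimension; in particular if $Q$ is positive somewhere then $k=\dim M$, $\rank du \equiv 0$, and $u$ is constant. For the last bullet, when $M'$ has \emph{strictly} negative sectional curvature the vanishing of the curvature term $\sum_{\alpha,\beta}\langle R'(du\cdot e_\alpha,du\cdot e_\beta)du\cdot e_\alpha,du\cdot e_\beta\rangle_{g'}$ forces, at each point, the image $du(T_\cdot M)\subset T_{u(\cdot)}M'$ to be at most one-dimensional (two linearly independent image vectors would span a plane of strictly negative curvature, contributing a strictly negative term); combined with $\nabla du\equiv 0$ this gives $\rank du \le 1$ everywhere, and then Proposition~\ref{rank<1} (applied to the pseudo-harmonic — indeed harmonic for $g$ — map $u$) shows that either $u$ is constant or it maps $M$ onto a closed geodesic of $M'$, which for the Levi-Civita connection $\nabla'$ of $g'$ is a genuine closed geodesic. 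The main obstacle I anticipate is the bookkeeping needed to extract $du(\Theta^\sharp)=0$ and to match the rank of the \emph{symmetric} part of $\mathit{Ricci}^W$ (which is what enters the integrated identity) with the geometry of $\Image(du)$; once that linear-algebra step is done carefully, the rest is the standard Eells–Sampson / Wu argument transplanted verbatim.
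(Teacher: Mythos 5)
Your overall strategy is exactly the paper's: the paper proves Theorem~\ref{BTh} only by the remark that Lemma~\ref{BLemma}, combined with the maximum principle for $\Delta^W$ (equivalently, integration over the closed manifold $M$ using $d^*\Theta=0$ for the Gauduchon metric, which is the route you take and which the paper itself uses in the proof of Lemma~\ref{std}), forces each non-negative term on the right-hand side to vanish, after which one follows Eells--Sampson and Wu. Your sign analysis, the identification of the span of the $X_\alpha$ with $(\Ker du)^\perp$, the resulting bound $\rank du\leqslant(\dim M)-k$, the constancy of the rank coming from $\nabla du\equiv 0$, and the treatment of the strictly negative curvature case via Proposition~\ref{rank<1} are all correct and are what the cited references do.

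The one step you leave genuinely unproved is $du(\Theta^\sharp)=0$. The mechanism you sketch --- that ``$Q$ controls exactly the complementary directions'' and that ``pairing the vanishing of the curvature term with the totally geodesic property pins down $du(\Theta^\sharp)=0$'' --- is not an argument: the vanishing of $Q$ on $(\Ker du)^\perp$ says nothing a priori about where $\Theta^\sharp$ sits relative to $\Ker du$. The correct deduction is the one-line computation you only gesture at in your final parenthetical. Once $\nabla du\equiv 0$ you have $\tau(u)=\mathit{trace}_g\nabla du=0$; since $u$ is Weyl harmonic, $\tau^W(u)=0$ as well, and formula~\eqref{Weyl&HME} gives
$$
0=\tau^W(u)-\tau(u)=-\frac{n-2}{2}\,du(\Theta^\sharp),
$$
whence $du(\Theta^\sharp)=0$ because $n>2$ throughout this part of the paper. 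Replacing your vague paragraph by this deduction (and deleting the backwards remark earlier, where you treat $du(\Theta^\sharp)=0$ as an input used to recover $\tau(u)=0$ rather than as the output) completes the proof; nothing else needs repair.
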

Note that when the Weyl structure is metric, relation~\eqref{RicciCond}
simply means that the Ricci tensor of the Gauduchon metric is
non-negative. Certainly, condition~\eqref{RicciCond} holds when
$\mathit{Ricci}^W(X,X)\geqslant 0$; the latter condition is much
stronger and does not illustrate the complete statement. Mention, for
example, that if under the conditions of Theorem~\ref{BTh} the Ricci
curvature $\mathit{Ricci}^W$ is non-negative and $\Theta^\sharp\ne 0$
simultaniously at some point, then any Weyl harmonic map already has
to be a constant map or a map onto a closed geodesic.

Combining this theorem with the existence of Weyl harmonic maps,
guaranteed by Theorem~\ref{ExTh}, we obtain the following corollary.
\begin{corollary}
\label{12}
Let $(M,c,\nabla^W)$ be a conformal Weyl manifold whose Ricci-Weyl
tensor $\mathit{Ricci}^W$ is non-negative or satisfies the weaker
condition~\eqref{RicciCond} and $(M',g')$ be a closed non-positively
curved Riemannian manifold.
\begin{itemize}
\item[(i)] Suppose that $M'$ carries a metric of negative sectional
  curvature. Then any map $v:M\to M'$ is homotopic either to a
  constant map or a map onto a closed geodesic. In particular, any
  homomorphism from $\pi_1(M)$ to the fundamental group of a closed
  negatively curved manifold is trivial or infinite cyclic. 
\item[(ii)] Suppose $M$ and $M'$ have the same dimension and the
  latter is orientable and has non-vanishing Euler-Poincar\'e number
  $\chi(M')$. Then any map $v:M\to M'$ is trivial on the top
  cohomology $H^{n'}(M',\mathbf Z)\to H^{n'}(M,\mathbf Z)$ unless the
  Weyl structure is exact and its Ricci curvature equals zero
  identically. 
\end{itemize}
\end{corollary}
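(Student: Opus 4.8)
The plan is to deduce Corollary~\ref{12} by combining the existence Theorem~\ref{ExTh}, the rigidity Theorem~\ref{BTh}, and the uniqueness Theorem~\ref{UniTh}. Before starting I note that by Gauduchon's theorem we may fix a Gauduchon metric $g\in c$ with co-closed Higgs field $\Theta$, so that Lemma~\ref{BLemma} and Theorem~\ref{BTh} apply; condition~\eqref{RicciCond} is precisely the hypothesis of Theorem~\ref{BTh}, and the assumption $\mathit{Ricci}^W\geqslant 0$ is a special case of it. The conclusion of Theorem~\ref{BTh} will then be available for \emph{any} Weyl harmonic map $M\to M'$: it is totally geodesic for $g$, satisfies $du(\Theta^\sharp)=0$, and is constant (or maps onto a closed geodesic) as soon as the quadratic form in~\eqref{RicciCond} has positive, respectively full enough, rank.

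For part~(i), given $v:M\to M'$ I would invoke Theorem~\ref{ExTh}: since $M'$ carries a metric of strictly negative sectional curvature, alternative~(ii) in that theorem applies unless $[v]$ contains a map onto a closed geodesic, and in either case $[v]$ contains a Weyl harmonic representative $u$ (in the exceptional case one takes $u$ itself to be the map onto the closed geodesic, which is already Weyl harmonic since on a geodesic the relevant equations reduce to the one-dimensional case, or argues directly). Applying the negative-curvature clause of Theorem~\ref{BTh} to $u$ shows $u$ is constant or maps onto a closed geodesic; since $u$ is homotopic to $v$, the same holds for $v$ up to homotopy. The statement about $\pi_1$ then follows because a homomorphism $h:\pi_1(M)\to\pi_1(M')$ into the fundamental group of a closed negatively curved manifold is realised by a map $v:M\to M'$ (classifying space argument, $M'$ being aspherical); if $v$ is homotopic to a constant, $h$ is trivial, and if $v$ is homotopic to a map onto a closed geodesic $\gamma$, then $h$ factors through $\pi_1(\gamma)\cong\mathbf Z$, hence is trivial or infinite cyclic.

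For part~(ii), suppose $M$ and $M'$ have equal dimension $n'$, with $M'$ orientable and $\chi(M')\neq 0$, and suppose $v:M\to M'$ is \emph{non}-trivial on $H^{n'}(\cdot,\mathbf Z)$. Then, as explained in the remarks following Theorem~\ref{ExTh}, the Euler class of $v^*TM'$ is non-trivial, so $v^*TM'$ has no non-trivial parallel section and alternative~(i) of Theorem~\ref{ExTh} gives a Weyl harmonic map $u$ homotopic to $v$. By Theorem~\ref{BTh}, $u$ is totally geodesic for $g$ and its image is an immersed submanifold of dimension $\leqslant n'-k$, where $k$ is the rank of the form in~\eqref{RicciCond} at a point; non-triviality on the top cohomology forces $u$ to be surjective up to homotopy, hence its image cannot have dimension $<n'$, so $k=0$ everywhere, i.e.\ the form in~\eqref{RicciCond} vanishes identically. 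Combined with $\mathit{Ricci}^W\geqslant 0$ (or~\eqref{RicciCond} with the stated sign) this forces $\mathit{Ricci}^W\equiv 0$ and $\abs{\Theta}^2\abs{X}^2-\Theta(X)^2\equiv 0$ for all $X$; in dimension $n'>2$ the latter is possible only if $\Theta\equiv 0$ with respect to the Gauduchon metric, whence the Weyl structure is exact (indeed metric, given by $g$) and $\mathit{Ricci}^W=\mathit{Ricci}_g$ vanishes. I expect the main obstacle to be the topological bookkeeping in the two cases --- namely verifying carefully that "non-trivial on top cohomology" is preserved under homotopy and genuinely precludes the image being a lower-dimensional immersed submanifold or a closed geodesic, and that the map onto a closed geodesic in part~(i) is legitimately a competitor for the existence theorem --- rather than any new analytic input, which is entirely supplied by Theorems~\ref{ExTh} and~\ref{BTh}.
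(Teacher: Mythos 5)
Your overall route --- existence from Theorem~\ref{ExTh} followed by the Bochner rigidity of Theorem~\ref{BTh} --- is the same as the paper's, and part~(i) is correct (the parenthetical claim that a map onto a closed geodesic is automatically Weyl harmonic is false in general, but harmless: in that exceptional case the conclusion of~(i) already holds with no further argument). Theorem~\ref{UniTh} is not in fact needed anywhere.

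In part~(ii), however, there is a genuine gap in the case where only the weaker condition~\eqref{RicciCond} is assumed. You correctly deduce that the quadratic form $Q(X)=\mathit{Ricci}^W(X,X)+\frac{n-2}{4}\left(\abs{\Theta}^2\abs{X}^2-\Theta(X)^2\right)$ must vanish identically, but you then split $Q\equiv 0$ into the vanishing of both summands. That separation is legitimate only under the stronger hypothesis $\mathit{Ricci}^W\geqslant 0$, where both summands are non-negative; under~\eqref{RicciCond} alone the first summand may be negative and be compensated by the second (which is always non-negative by Cauchy--Schwarz), so $Q\equiv 0$ by itself does not force $\Theta\equiv 0$. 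The paper closes exactly this point with the other conclusion of Theorem~\ref{BTh}, namely $du(\Theta^\sharp)=0$ --- which you quote in your preamble but never invoke: if the Gauduchon Higgs field $\Theta$ did not vanish identically, then $du$ would have nontrivial kernel near some point; since $u$ is totally geodesic it has constant rank, so its image would be a totally geodesic submanifold of dimension strictly less than $n'$, contradicting the nonvanishing of the degree. Hence $\Theta\equiv 0$ (for a Gauduchon metric this is equivalent to exactness of the Weyl structure, since $d^*\Theta=0$ forces an exact $\Theta$ to vanish), and only after this does $Q\equiv 0$ collapse to $\mathit{Ricci}^W\equiv 0$. With that one substitution your argument coincides with the paper's.
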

\begin{proof}
Case~$(i)$. Suppose the contrary. Let $M'$ be a closed manifold of
negative sectional curvature and $v:M\to M'$ be a smooth map which is
homotopic neither to a constant map nor a map onto a closed geodesic.
Then Theorem~\ref{ExTh} implies that $v$ is homotopic to a Weyl
harmonic map. The latter, due to Theorem~\ref{BTh}, has to be a
constant or a map onto a closed geodesic - a contradiction.

Case~$(ii)$. Suppose the contrary. Let $v:M\to M'$ be a map which is
non-trivial on the top cohomology; in particular, its degree does not
vanish. Under the hypotheses, Theorem~\ref{ExTh} applies, see the
discussion in Sect.~\ref{1:3}, -- we see that there is a Weyl harmonic
map $u$ homotopic to $v$. First, suppose that the Weyl structure
$\nabla^W$ is not exact, then the Higgs field $\Theta$ of the
Gauduchon metric can not vanish everywhere and the relation
$du(\Theta^\sharp)=0$ in Theorem~\ref{BTh} implies that
$\rank(du)<\dim M'$ in the neighbourhood of some point. Since the map
$u$ is also totally geodesic, it has constant rank and its image is a
totally geodesic submanifold whose dimension has to be less than $\dim
M'$ -- a contradiction to the fact that the degree of $u$ does not
vanish. Thus, the Higgs field $\Theta$ has to be exact and equal to
zero identically. Now Lemma~\ref{BLemma} implies that it is possible
only when the Ricci tensor $\mathit{Ricci}^W$ equals zero identically.
\end{proof}
A simple example with a torus shows that the condition on the Euler
characteristic in the last assertion is necessary. Up to our
knowledge these corollaries are first known topological obstructions
for not necessarily closed Weyl structures with non-negative symmetric
part of the Ricci-Weyl curvature in arbitrary dimension. We proceed
with an application to Einstein-Weyl geometry.

Recall that a Weyl structure is called {\em Einstein-Weyl} if the
symmetric part of the Ricci curvature $\mathit{Ricci}^W$ of the Weyl
connection is proportional to some (and hence any) metric in the
conformal class $c$. The following assertion gives topological
obstructions to the existence of non-exact Einstein-Weyl structures,
complementing the results by Gauduchon~\cite{Gaud2} and  also by
Pedersen and Swann~\cite{PS}; cf.~\cite[Th.~4.9]{CP}.
\begin{corollary}
\label{3}
Let $(M,c)$ be a conformal manifold endowed with a non-exact
Einstein-Weyl structure $\nabla^W$ and $(M',g')$ be a closed
non-positively curved Riemannian manifold. 
\begin{itemize}
\item[(i)] If $M'$ carries a metric of negative sectional curvature,
  then any map $v:M\to M'$ is homotopic either to a constant map or a
  map onto a closed geodesic. In particular, any homomorphism from
  $\pi_1(M)$ to the fundamental group of a closed negatively curved
  manifold is trivial or infinite cyclic. 
\item[(ii)] If $M'$ is orientable and its Euler characterictic of
  $\chi(M')$ does not vanish, then any map $v:M\to M'$ is trivial
  on the top cohomology $H^{n'}(M',\mathbf Z)\to H^{n'}(M,\mathbf
  Z)$. In particular, the manifolds $M$ and $M'$ are not homotopy
  equivalent.
\end{itemize}
\end{corollary}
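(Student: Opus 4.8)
The plan is to deduce this corollary from Corollary~\ref{12}. The only thing that really needs an argument is that a non-exact Einstein-Weyl structure on a compact manifold is covered by the hypotheses of that corollary, and that it does not fall under the exceptional case of Corollary~\ref{12}$(ii)$ --- the Weyl structure being exact with identically vanishing Ricci-Weyl tensor. The latter is immediate from the non-exactness assumption, so the whole point is to check the curvature condition.

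To this end, fix the Gauduchon metric $g\in c$ and denote by $\Theta$ its Higgs field. By the definition of the Einstein-Weyl condition the symmetric part of $\mathit{Ricci}^W$ equals $(\Lambda/n)g$, where $\Lambda$ is the Weyl scalar curvature. I then invoke Gauduchon's structure theory of compact Einstein-Weyl manifolds (see~\cite{Gaud2,PS}, cf.~\cite[Th.~4.9]{CP}): in the Gauduchon gauge $\Lambda$ is a constant, and this constant can be negative only when the Weyl structure is exact --- exact Einstein-Weyl structures are precisely those induced by Einstein metrics, and the negative sign corresponds to Einstein metrics of negative scalar curvature. Since $\nabla^W$ is non-exact, we obtain $\Lambda\geqslant 0$, so that $\mathit{Ricci}^W(X,X)=(\Lambda/n)\abs{X}^2\geqslant 0$ for every tangent vector $X$. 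In particular the Ricci-Weyl tensor is non-negative and, a fortiori, condition~\eqref{RicciCond} is satisfied.

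With this in hand, Corollary~\ref{12} applies directly. Its part~$(i)$ yields, when $M'$ carries a metric of negative sectional curvature, that every $v:M\to M'$ is homotopic to a constant map or to a map onto a closed geodesic, and the assertion about homomorphisms of $\pi_1(M)$ follows exactly as there. Its part~$(ii)$ yields, when $M'$ is orientable with $\chi(M')\neq 0$, that every $v:M\to M'$ is trivial on the top cohomology --- the exceptional case having been ruled out above --- and hence that $M$ and $M'$ cannot be homotopy equivalent.

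The single non-formal ingredient here is the sign statement for $\Lambda$ in the Gauduchon gauge, and this is precisely where non-exactness is essential. An exact Einstein-Weyl structure arising from an Einstein metric of negative scalar curvature has $\Lambda<0$ and does not satisfy the conclusions: for instance one may take $M=M'$ to be a compact hyperbolic manifold of even dimension $\geqslant 4$ (orientable, with $\chi\neq 0$ by Gauss--Bonnet, and carrying a negatively curved metric), for which the identity map violates both~$(i)$ and~$(ii)$.
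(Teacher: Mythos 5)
Your overall strategy --- reduce everything to Corollary~\ref{12} by showing that a non-exact Einstein--Weyl structure on a closed manifold has non-negative symmetric Ricci--Weyl curvature, and note that non-exactness alone rules out the exceptional clause in Corollary~\ref{12}$(ii)$ --- is reasonable, and your closing counterexample correctly locates where non-exactness enters. The gap is in the one non-formal ingredient you invoke. First, the conformal scalar curvature $\Lambda=\mathit{trace}_g\,\mathit{Ricci}^W$ is \emph{not} constant in the Gauduchon gauge in general: combining Gauduchon's theorem that $\Theta^\sharp$ is a Killing field of the Gauduchon metric with the contracted Bianchi identity yields only that $\Lambda+\frac{n(n-4)}{4}\abs{\Theta}^2$ is constant, so $\Lambda$ itself is constant only when $n=4$ or $\abs{\Theta}$ is constant. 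Second, and more seriously, the results of Gauduchon~\cite{Gaud2} and Pedersen--Swann~\cite{PS} you appeal to are of the form ``a compact Einstein--Weyl structure whose conformal scalar curvature is \emph{everywhere} non-positive is closed (exact if the curvature is negative)''; this does not by itself exclude a non-exact structure whose conformal scalar curvature changes sign, which is precisely what would defeat the application of Corollary~\ref{12}. For $n\geqslant 4$ the gap can be closed: at a maximum point of $\abs{\Theta}^2$ the Bochner formula for Killing fields together with the Einstein--Weyl relation $\mathit{Ricci}(\Theta^\sharp,\Theta^\sharp)=(\Lambda/n)\abs{\Theta}^2$ forces $\Lambda\geqslant 0$ there, and by the identity above this is where $\Lambda$ attains its minimum when $n\geqslant 4$. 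But for $n=3$ the sign of the additive constant is not controlled this way, and one has to fall back on Tod's classification~\cite{Tod}.

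This is exactly why the paper argues by cases instead of through a single curvature inequality: for closed Weyl structures it uses Gauduchon's vanishing theorem ($\mathit{Ricci}^W\equiv 0$ unless the structure is exact), which is stronger than a sign condition and feeds directly into Theorem~\ref{BTh}; for non-closed structures it invokes the classification of Calderbank--Pedersen~\cite{CP} (which in particular yields finite fundamental group via positive Ricci curvature in dimension $\geqslant 4$) and Tod's three-dimensional classification (finite cover by $S^3$), disposing of those cases by essentially topological arguments. To keep your streamlined route you would need either to locate the precise statement ``compact and non-exact implies $\Lambda\geqslant 0$ pointwise'' in the literature, or to prove it along the lines sketched above and treat the three-dimensional case separately.
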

\begin{proof}
First, due to the important result of Gauduchon~\cite[Th.3]{Gaud2}
the Ricci curvature $\mathit{Ricci^W}$ of a compact Einstein-Weyl
manifold with a closed Weyl structure vanishes identically unless the
latter is exact. The combination of this with Theorems~\ref{ExTh}
and~\ref{BTh} proves the statements when the Einstein-Weyl structure
is closed. More precisely, the first statement is a consequence of
Cor.~\ref{12}. The proof of the second is based on the observation
that, since $\mathit{Ricci}^W$ vanishes identically and $\Theta$ does
not, any Weyl harmonic map $u$ is constant or maps $M$ onto a closed
geodesic. Thus, the pull-back bundle $u^*TM'$ has trivial Euler class
and, since $\chi(M')\ne 0$, the map $u$ has to be trivial on the top
cohomology.

Now the classification theorem of Einstein-Weyl
structures~\cite[Th.~4.7]{CP} says that non-closed Einstein-Weyl
structures occur only when $M$ admits a metric of positive Ricci
curvature or the dimension of $M$ is at most three. The first
possibility is also handled by Theorems~\ref{ExTh} and~\ref{BTh}.
In dimension three Einstein-Weyl structures on compact manifolds are
completely classified by Tod~\cite{Tod}. In particular, non-closed
ones occur only on manifolds $M$ which are finitely covered by the
sphere $S^3$. In this case the conclusions of the statement hold
simply by topological reasons. In more detail, since $\pi_1(M')$ does
not have non-trivial elements of finite order, any map $v:M\to M'$
induces a trivial map on the fundamental groups. Now since $M'$ is a
$K(\pi,1)$-space, the map $v$ has to be null-homotopic.
\end{proof}
We end with two remarks. Firstly, Cor.~\ref{3} implies rigidity of
Einstein-Weyl structure on some irreducible locally symmetric spaces
of non-compact type: any Einstein-Weyl structure $(c,\nabla^W)$
has to be genuine Einstein. Such locally symmetric spaces include
orientable spaces whose universal cover is a bounded symmetric domain
in $\mathbf C^m$ or one of the spaces
$$
SO_0(p,q)/SO(p)\times SO(q),\quad Sp(p,q)/Sp(p)\times Sp(q),\text{ or }
F_{-4(20)}/Spin(9);
$$
see Cor.~\ref{Euler}. This is in contrast with the symmetric spaces of
compact type, where non-trivial Einstein-Weyl structures do exist;
see~\cite{Kerr}.

Secondly, mention that the corollaries above also imply the
non-existence of non-trivial Einstein-Weyl structures (or structures
with non-negative symmetric part of Ricci-Weyl curvature) on certain
connected sums. More precisely, let $M'$ be a manifold as in
Cor.~\ref{3} which satisfies $(i)$ or $(ii)$ and $M$ be an
arbitrary closed manifold of the same dimension. Then their connected
sum does not admit a non-trivial Einstein-Weyl structure
$(c,\nabla^W)$. Indeed, otherwise we  would have a contradiction with
Cor.~\ref{3}, since there is a map of degree one from $M'\sharp
M$ to $M'$ (for example, obtained by collapsing $M$ into a point).

\section{Pluriharmonicity of Weyl harmonic maps}
\label{PLURI}

\subsection{Preliminaries on K\"ahler-Weyl structures}
Let $M$ be a complex manifold of complex dimension $n>1$ and
let $J$ be its complex structure. A conformal manifold $(M,c,J)$ is
called {\it Hermitian} if $c$ contains a Hermitian metric; thus, any
metric in $c$ is Hermitian. Following~\cite{CP}, the triple
$(M,c,\nabla^W)$ is called a {\it K\"ahler-Weyl manifold} if the Weyl
connection $\nabla^W$ preserves the compex structure on $M$. 

Straightforward calculation shows that in complex dimension two any
Hermitian conformal manifold $(M,c,J)$ admits a unique K\"ahler-Weyl
structure; i.e. a unique Weyl connection $\nabla^W$ such that
$\nabla^WJ=0$. In higher dimensions the existence of such a Weyl
connection forces $M$ to be a {\it locally conformal K\"ahler}
manifold. We recall the definition of this notion now.

A Hermitian manifold $(M,g,J)$ is called locally conformal K\"ahler
if there exists an open covering $\{U_i\}$ of $M$ such that any
restriction $\left.g\right|_{U_i}$ is conformally equivalent to a
K\"ahler metric $\tilde g_i$,
$$
g(x)=\exp(V_i(x))\tilde g_i(x), \quad\text{where }x\in U_i.
$$
This is equivalent to the condition:
$$
d\omega=\Theta\wedge\omega,\qquad d\Theta=0,
$$
where $\omega$ is the K\"ahler form of $g$ and $\Theta$ is a closed
$1$-form (called the {\it Lee form}), locally defined by
$\left.\Theta\right|_{U_i}=dV_i$. In particular, the form $\Theta$ is
exact if and only if $(M,g,J)$ is {\it globally conformal
  K\"ahler}. The Levi-Civita connections of the local K\"ahler metrics
$\tilde g_i$ glue together to a connection $\nabla^W$ related to the
Levi-Civita connection $\nabla$ of the metric $g$ by the formula
$$
\nabla^W_XY=\nabla_XY-\frac{1}{2}\Theta(X)Y-\frac{1}{2}\Theta(Y)X+
\frac{1}{2}g(X,Y)\Theta^\sharp.
$$
The two properties $\nabla^Wg=\Theta\otimes g$ and $\nabla^W$ is
torsion-free show that $\nabla^W$ is a Weyl connection on the
conformal Hermitian manifold $(M,[g],J)$. Besides, the connection
$\nabla^W$ satisfies $\nabla^WJ=0$ and its Higgs field is precisely
the Lee form $\Theta$.

Recall that due to the theorem of Gauduchon~\cite{Gaud} there exists a
canonical Hermitian metric $g\in c$ such that the corresponding Lee
form $\Theta$ is co-closed,
$$
-g^{\alpha\bar\mu}(\nabla\Theta)_{\alpha\bar\mu}=0;
$$
here $\nabla$ denotes the Levi-Civita connection of $g$.
\begin{defin*}
A metric $g\in c$ on a K\"ahler-Weyl manifold $(M,c,\nabla^W)$ is
called {\it pluricanonical}, if the $(1,1)$-part of the covariant
derivative of the corresponding Lee form vanishes, 
$(\nabla\Theta)_{\alpha\bar\mu}=0$.
\end{defin*}
First, any pluricanonical metric is canonical in the sense of the
Gauduchon theorem and, in particular, is unique up to
homothety. Second, if $M$ admits a Kahler metric, then
the latter is obviously pluricanonical. The large and well-studied
class of locally conformal K\"ahler manifolds which admit
pluricanonical metrics is formed by {\it generalised Hopf manifolds}
(or also called {\it Vaisman manifolds}), see~\cite{DO,Ornea}. These
are manifolds satisfying a much stronger hypothesis: they admit
metrics with the parallel Lee form, $\nabla\Theta=0$.

Generalised Hopf manifolds with non-trivial Lee form do not admit
K\"ahler metrics. The reason for this is topological -- their first
Betti number is odd, see~\cite{DO,Ornea}. More generally, the
{\it Vaisman conjecture} states that any locally conformal K\"ahler
manifold of the ``same topology'' as a K\"ahler manifold admits a
global K\"ahler metric. For complex surfaces the situation is
satisfactory: as is known any complex surface with even first Betti
number is K\"ahler, see~\cite[Sect.~1]{Am}. However, in higher
dimensions there are examples of locally conformal K\"ahler manifolds
with even first Betti numbers which do not admit K\"ahler metrics,
see~\cite{OT}.

In Sect.~\ref{top} we give a partial confirmation of Vaisman's
principle: we show that locally conformal K\"ahler manifolds with
pluricanonical metrics which are homotopy equivalent to compact
quotients of bounded symmetric domains in $\mathbf C^n$ without a unit
disk in $\mathbf C$ as a factor have to be K\"ahler.

\subsection{Bochner-Sampson technique}
Now we discuss a Bochner-type formula for Weyl harmonic maps from a
K\"ahler-Weyl manifold $M$ to a Riemannian manifold $M'$. This formula
is obtained by computing an iterated Weyl-divergence of the symmetric
$(1,1)$-tensor 
$$
\psi(X,Y)=e(u)\langle X,Y\rangle-\langle d'u(X),d^{\prime\prime}
u(Y)\rangle,
$$
where $u$ is a Weyl harmonic map and $e(u)$ is its energy density
$(1/2)\lVert du\rVert^2$. The first and the second inner products
above are the complex linear extensions of the Riemannian metrics on
$M$ and $M'$ respectively to the complexified tangent bundles; the
$d'u$  and $d''u$ are the restriction of the complexification of $du$
to the holomorphic and anti-holomorphic tangent bundles respectively.
Since $M'$ is not assumed to have a complex structure, the
differentials $d'u$ and $d''u$ are not the ones usual to complex
manifold theory.

Consider next the Weyl harmonic map equation. If $z^\alpha$ denote
local holomorphic coordinates on $M$ and the $u^i$ denote local smooth
coordinates on $M'$, then $d'u$ is represented by the matrix
$(u^i_\alpha)$, where subscripts denote differentiation with respect
to $z^\alpha$. The Weyl harmonic map equation takes the form
\begin{equation}
\label{CWEq}
g^{\alpha\bar\mu}\nabla^W_{\bar\mu}u^i_{\alpha}=0,
\end{equation}
where $g$ is an arbitrary (Hermitian) metric in $c$. Here the
$\nabla^W_{\bar\mu}u^i_{\alpha}$ are the components of the tensor
$\tilde\nabla''d'u$, defined as the $(0,1)$-part of the covariant
differential of $d'u$ in the natural connection on $\Hom(T^{1,0}M,
u^*T^cM')$, i.e. that determined by the Weyl connection on $TM$
and the Levi-Civita connection on $TM'$. Since the Weyl connection
$\nabla^W$ is complex, equation~\eqref{CWEq} coincides with the
Hermitian harmonic map equation; see Ex.~\ref{Nher}. 

Now construct a $(1,0)$-form by the divergence-type formula
$$
\xi_\alpha=g^{\beta\bar\gamma}\nabla^W_{\beta}
\psi_{\alpha\bar\gamma}.
$$
The latter satisfies the following extension of Sampson's 
formula~\cite{Sam} to the context of pseudo-harmonic maps.
\begin{lemma}
\label{SamF}
Let $(M,c,\nabla^W)$ be a K\"ahler-Weyl manifold and $(M',g')$ be a
Riemannian manifold. Then for any metric $g\in c$ a Weyl harmonic map
$u:M\to M'$ satisfies the following relation
\begin{equation}
\label{BSf}
g^{\alpha\bar\mu}\nabla^W_{\bar\mu}\xi_\alpha+
\xi_\alpha\Theta^\alpha=g'_{ij}(\nabla^W_{\bar\gamma}u^i_\alpha)
(\nabla^W_{\bar\mu}u^j_\beta)g^{\alpha\bar\mu}g^{\beta\bar\gamma}
-R'_{ijlm}u^i_\alpha u^j_\beta u^l_{\bar\mu}u^m_{\bar\gamma}
g^{\alpha\bar\mu}g^{\beta\bar\gamma},
\end{equation}
where $\Theta^\alpha$ are $(1,0)$-components of the Lee field
$\Theta^\sharp$.
\end{lemma}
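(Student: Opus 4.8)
The plan is to compute $g^{\alpha\bar\mu}\nabla^W_{\bar\mu}\xi_\alpha$ directly in local holomorphic coordinates on $M$, using the definition $\xi_\alpha=g^{\beta\bar\gamma}\nabla^W_\beta\psi_{\alpha\bar\gamma}$ and the defining property $\nabla^Wg=\Theta\otimes g$ of the Weyl connection. First I would expand $\psi_{\alpha\bar\gamma}$ in coordinates. Since $e(u)=\tfrac12\lVert du\rVert^2=g^{\rho\bar\sigma}g'_{ij}u^i_\rho u^j_{\bar\sigma}$ (up to the standard normalisation) and the second term is $g'_{ij}u^i_\alpha u^j_{\bar\gamma}$, the tensor $\psi$ is built out of the first derivatives $u^i_\alpha$, $u^i_{\bar\gamma}$, the metric $g$ on $M$, and the pulled-back metric $g'$ on $M'$. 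Applying $g^{\beta\bar\gamma}\nabla^W_\beta$ and then $g^{\alpha\bar\mu}\nabla^W_{\bar\mu}$ and commuting covariant derivatives will produce: (a) the curvature term of $\nabla^W$ acting on $TM$, (b) the curvature term $R'$ of $\nabla'$ pulled back along $u$, (c) terms involving second covariant derivatives $\nabla^W_{\bar\mu}u^i_\alpha$ and their conjugates, and (d) terms that vanish because $u$ solves the Weyl harmonic map equation~\eqref{CWEq}, $g^{\alpha\bar\mu}\nabla^W_{\bar\mu}u^i_\alpha=0$.

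The key simplifications are the following. Because $(M,c,\nabla^W)$ is K\"ahler-Weyl, the Weyl connection is complex, so it preserves the splitting $T^cM=T^{1,0}M\oplus T^{0,1}M$; consequently the curvature $R^W$ of $\nabla^W$ has no $(2,0)$ or $(0,2)$ components and its only contribution here is through the Ricci-type contraction $g^{\alpha\bar\mu}g^{\beta\bar\gamma}R^W_{\alpha\bar\mu\,\beta}{}^{\cdot}$. The crucial point is that, unlike in the K\"ahler case where Sampson's original computation gives a clean identity, here the non-closedness of $\Theta$ and the term $\nabla^Wg=\Theta\otimes g$ force the appearance of the extra first-order term $\xi_\alpha\Theta^\alpha$ on the left-hand side: every time a covariant derivative falls on the metric factors $g^{\beta\bar\gamma}$ inside $\xi_\alpha$ or on $g^{\alpha\bar\mu}$ in the outer contraction, one picks up a factor of $\Theta$, and these assemble precisely into $\xi_\alpha\Theta^\alpha$ after using the harmonic map equation to kill the remaining first-order pieces. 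The curvature-of-$\nabla^W$ contributions should cancel against each other in the same way they do in the classical K\"ahler case (this is the content of the fact that the formula has \emph{no} Ricci-Weyl term on the right, in contrast to Lemma~\ref{BLemma}), because in Sampson's identity the domain curvature enters only through the $(1,1)$-Ricci form of a \emph{K\"ahler} metric, and the relevant $(1,1)$-curvature of a complex connection behaves formally the same way whether or not $\nabla^W$ is a metric connection.

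I would organise the computation as follows: (1) write $\psi_{\alpha\bar\gamma}$ and $\xi_\alpha$ explicitly; (2) compute $\nabla^W_\beta\psi_{\alpha\bar\gamma}$, discarding terms proportional to the harmonic map equation and keeping track of $\Theta$-terms coming from $\nabla^Wg=\Theta\otimes g$; (3) apply $g^{\alpha\bar\mu}\nabla^W_{\bar\mu}(\cdot)$, again commuting derivatives to extract $R^W$ and $R'$; (4) use the Ricci identity for $\nabla^W$ on the domain and the standard one for $\nabla'$ on the target to identify the curvature terms, observing the cancellation of all $R^W$-contributions; (5) collect the surviving first-order terms and check that they combine exactly into $-\xi_\alpha\Theta^\alpha$, moving it to the left-hand side. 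The main obstacle I anticipate is bookkeeping in step (4)–(5): one must be careful that the $\Theta$-terms generated at the two different stages of differentiation, together with those from commuting $\nabla^W_\beta$ past $\nabla^W_{\bar\mu}$ (whose commutator involves $d\Theta$-type pieces that vanish since $\Theta$ is closed for a K\"ahler-Weyl structure, $d\Theta=0$), conspire to give precisely the single term $\xi_\alpha\Theta^\alpha$ and nothing more — in particular that no residual $\lvert\Theta\rvert^2$ or $\nabla\Theta$ terms survive. Here the hypothesis that the Weyl structure on a locally conformal K\"ahler manifold has \emph{closed} Lee form is essential and should be invoked explicitly.
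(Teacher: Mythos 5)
Your overall strategy --- a direct coordinate computation in the style of Sampson, using $\nabla^Wg=\Theta\otimes g$, equation~\eqref{CWEq}, and commutation of covariant derivatives --- is the same as the paper's, but two points in your outline are genuine problems. First, you plan to apply $g^{\beta\bar\gamma}\nabla^W_\beta$ and then $g^{\alpha\bar\mu}\nabla^W_{\bar\mu}$ and to commute a holomorphic derivative past an anti-holomorphic one; this generates $(1,1)$-curvature (Ricci--Weyl type) contributions from $\nabla^W$, and you merely assert that they ``cancel as in the classical K\"ahler case.'' That cancellation is exactly the non-trivial content, and it is not helped by $d\Theta=0$, since the $(1,1)$-curvature of a Weyl connection involves $\nabla\Theta$ and $\Theta\otimes\Theta$ and not only the antisymmetric part $d\Theta$. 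The paper sidesteps the issue entirely by a different order of operations: using only the symmetry of $\nabla^Wdu$ and equation~\eqref{CWEq}, it first collapses $\xi_\alpha$ to the single term $g'_{ij}(\nabla^W_{\bar\gamma}u^i_\alpha)u^j_\beta g^{\beta\bar\gamma}$ (relation~\eqref{R1}); after that only the commutator of two \emph{anti-holomorphic} derivatives is needed, and the $(0,2)$-part of the curvature of a torsion-free complex connection acting on $T^{1,0}M$ vanishes identically, so no curvature of $\nabla^W$ ever appears and only the pulled-back $R'$ survives (relation~\eqref{MainF}). Without an analogue of \eqref{R1}, your bookkeeping in steps (4)--(5) does not obviously close up.

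Second, your claim that the closedness of the Lee form ``is essential and should be invoked explicitly'' is incorrect. The paper's proof nowhere uses $d\Theta=0$: the only properties of $\nabla^W$ used are torsion-freeness, $\nabla^WJ=0$, and $\nabla^Wg=\Theta\otimes g$; the single term $\xi_\alpha\Theta^\alpha$ arises from $\nabla^W_{\bar\mu}g^{\beta\bar\gamma}=-\Theta_{\bar\mu}g^{\beta\bar\gamma}$ in the outer divergence, and the residual second-order term is killed by differentiating \eqref{CWEq} itself. This matters: for a K\"ahler-Weyl surface ($n=2$) the Lee form need not be closed, and Lemma~\ref{SamF} is applied in exactly that generality in Theorem~\ref{pluri}, so adding $d\Theta=0$ as a hypothesis would weaken the lemma where it is needed.
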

\begin{proof}
In local coordinates the quadratic differential
$\psi_{\alpha\bar\gamma}$ has the form
$$
\psi_{\alpha\beta}=e(u)g_{\alpha\bar\gamma}-g'_{ij}u^i_\alpha
u^j_{\bar\gamma},
$$
where $e(u)$ equals $g^{\delta\bar\nu}u^k_\delta
u^l_{\bar\nu}g'_{kl}$. Its Weyl-divergence, the form $\xi_\alpha$,
satisfies the following relation
\begin{equation}
\label{R1}
\xi_\alpha=g'_{ij}(\nabla^W_{\bar\gamma}u^i_\alpha)u^j_\beta 
g^{\beta\bar\gamma}.
\end{equation}
Indeed, by straightforward differentiation, we have
\begin{multline*}
\xi_\alpha=g^{\beta\bar\gamma}\nabla^W_\beta\psi_{\alpha\bar\gamma}=
g^{\delta\bar\nu}(\nabla^W_\alpha u^k_\delta)u^l_{\bar\nu}g'_{kl}+
g^{\delta\bar\nu}u^k_\delta(\nabla^W_\alpha u^l_{\bar\nu})g'_{kl}\\
-g^{\beta\bar\gamma}g'_{ij}(\nabla^W_\beta u^i_\alpha)u^j_{\bar\gamma}
-g^{\beta\bar\gamma}g'_{ij}u^i_\alpha(\nabla^W_\beta u^j_{\bar\gamma}).
\end{multline*}
Since the map $u$ satisfies equation~\eqref{CWEq}, the last term above
vanishes. Further, since the form $\nabla^Wdu$ is symmetric, the first
and the third terms cancel out and the second coincides with the
right-hand side in~\eqref{R1}.

Now we compute the Weyl-divergence of $\xi_\alpha$. Using
formula~\eqref{R1} and the fact that the derivative $\nabla^W_{\bar\mu}
g^{\beta\bar\gamma}$ equals $-\Theta_{\bar\mu}g^{\beta\bar\gamma}$, we
obtain
$$
g^{\alpha\bar\mu}\nabla^W_{\bar\mu}\xi_\alpha=g^{\alpha\bar\mu}
\nabla^W_{\bar\mu}\left(\nabla^W_{\bar\gamma}u^i_\alpha
u^j_\beta\right)g^{\beta\bar\gamma}g'_{ij}-g^{\alpha\bar\mu}
\Theta_{\bar\mu}\left(\nabla^W_{\bar\gamma}u^i_\alpha\right)
u^j_\beta g^{\beta\bar\gamma}g'_{ij}.
$$
Clearly, the second term above equals
$\xi_\alpha\Theta^\alpha$. Following the lines in~\cite{Sam}, the
first term in the right-hand side above can be further transformed
as follows
\begin{equation}
\label{R4}
g'_{ij}(\nabla^W_{\bar\gamma}u^i_\alpha)(\nabla^W_{\bar\mu}u^j_\beta)
g^{\alpha\bar\mu}g^{\beta\bar\gamma}-R'_{ijlm}u^i_\alpha u^j_\beta 
u^l_{\bar\mu}u^m_{\bar\gamma}g^{\alpha\bar\mu}g^{\beta\bar\gamma}
+g'_{ij}\left(\nabla^W_{\bar\gamma}\nabla^W_{\bar\mu}u^i_\alpha\right)
u^j_\beta g^{\alpha\bar\mu}g^{\beta\bar\gamma}.
\end{equation}
Indeed, it equals the sum
$$
g'_{ij}\left(\nabla^W_{\bar\mu}\nabla^W_{\bar\gamma}u^i_\alpha\right)
u^j_\beta g^{\alpha\bar\mu}g^{\beta\bar\gamma}+
g'_{ij}(\nabla^W_{\bar\gamma}u^i_\alpha)(\nabla^W_{\bar\mu}u^j_\beta)
g^{\alpha\bar\mu}g^{\beta\bar\gamma}.
$$
Due to the fact that the Weyl connection $\nabla^W$ preserves the
complex structure $J$, we have
\begin{equation}
\label{MainF}
\nabla^W_{\bar\mu}\nabla^W_{\bar\gamma}u^i_\alpha-\nabla^W_{\bar\gamma}
\nabla^W_{\bar\mu}u^i_\alpha=-R^{\prime~i}_{jlm}u^j_\alpha
u^l_{\bar\mu} u^m_{\bar\gamma}.
\end{equation}
Substituting this into the first term above we arrive at the
expression~\eqref{R4}. To prove formula~\eqref{BSf} it remains to
show that the last term in~\eqref{R4} vanishes. In fact, a stronger
relation holds: the tensor $g^{\alpha\bar\mu}\nabla^W_{\bar\gamma}
\nabla^W_{\bar\mu}u^i_\alpha$ vanishes. To see the latter we
differentiate the Weyl harmonic map equation~\eqref{CWEq}, 
$$
0=\nabla^W_{\bar\gamma}\left(g^{\alpha\bar\mu}\nabla^W_{\bar\mu}
u^i_\alpha\right)=g^{\alpha\bar\mu}\nabla^W_{\bar\gamma}
\nabla^W_{\bar\mu}u^i_\alpha-\Theta_{\bar\gamma}g^{\alpha\bar\mu}
\nabla^W_{\bar\mu}u^i_\alpha.
$$ 
Since $u$ is a Weyl harmonic map, the last term above vanishes, and
hence so does the first term on the right-hand side. 
\end{proof}

Recall that a Riemannian manifold $M'$ is said to have {\it
  non-positive Hermitian sectional curvature} if the extension of the
  curvature tensor $R'$ to the complexified tangent bundle $T^cM'$
  satisfies the relation
\begin{equation}
\label{HSCurv}
\langle R'(X,Y)\bar X,\bar Y\rangle\leqslant 0\quad
\text{for all}\quad X, Y\in T^cM'.
\end{equation}
If the vector fields $X$ and $Y$ are real and orthonormal, then the
curvature quantity above is precisely the sectional curvature of the
plane they determine. However, in general the condition~\eqref{HSCurv}
is stronger than the condition of non-positive sectional curvature.
\begin{theorem}
\label{pluri}
Let $(M,c,\nabla^W)$ be a K\"ahler-Weyl manifold and $(M',g')$ be a
Riemannian manifold of non-positive Hermitian sectional curvature. Let
$u:M\to M'$ be a Weyl harmonic map. If $M$ has complex dimension $2$
or admits a pluricanonical metric, then $u$ is pluriharmonic,
$\tilde\nabla''d'u=0$, and
$$
\langle R'(X,Y)\bar X,\bar Y\rangle=0\quad
\text{for all}\quad X, Y\in du(T_x^{1,0}M),\quad x\in M.
$$
\end{theorem}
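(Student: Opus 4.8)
The plan is to integrate the Bochner–Sampson formula~\eqref{BSf} from Lemma~\ref{SamF} over the compact manifold $M$ against the Gauduchon metric and exploit the pluricanonical (or complex-surface) hypothesis to kill the boundary-type term involving the Lee field. Recall that the canonical metric $g\in c$ guaranteed by Gauduchon's theorem has co-closed Lee form, and the pluricanonical condition strengthens this to $(\nabla\Theta)_{\alpha\bar\mu}=0$; in complex dimension two the standard computation (as in the locally conformal Kähler literature, cf.~Sect.~\ref{PLURI}) shows that the Gauduchon metric is automatically pluricanonical, so the two hypotheses can be treated uniformly. First I would rewrite the left-hand side of~\eqref{BSf} as a Weyl-divergence of the $(1,0)$-form $\xi$ plus the correction term $\xi_\alpha\Theta^\alpha$, and observe that with respect to the Gauduchon metric the Weyl-divergence integrates to zero (the Weyl connection differs from the Levi-Civita one by terms built from $\Theta$, and $d^*\Theta=0$ handles the discrepancy, exactly as in the derivation of $\Delta^W$ having trivial integral in Sect.~\ref{firstBT}). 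So integration leaves
$$
\int_M \xi_\alpha\Theta^\alpha\, d\mathit{Vol}_g
=\int_M\Bigl( g'_{ij}(\nabla^W_{\bar\gamma}u^i_\alpha)(\nabla^W_{\bar\mu}u^j_\beta)g^{\alpha\bar\mu}g^{\beta\bar\gamma}
-R'_{ijlm}u^i_\alpha u^j_\beta u^l_{\bar\mu}u^m_{\bar\gamma}g^{\alpha\bar\mu}g^{\beta\bar\gamma}\Bigr)d\mathit{Vol}_g.
$$
The second integrand is $\le 0$ by the non-positive Hermitian sectional curvature hypothesis~\eqref{HSCurv}, and the first is manifestly $\ge 0$ (it is a squared norm of $\tilde\nabla''d'u$).

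The crux is therefore to show the left-hand side vanishes, i.e. $\int_M\xi_\alpha\Theta^\alpha=0$. Using formula~\eqref{R1} for $\xi_\alpha$, this term is $\int_M g'_{ij}(\nabla^W_{\bar\gamma}u^i_\alpha)u^j_\beta\,\Theta^\alpha g^{\beta\bar\gamma}$, and I would integrate by parts once more, moving $\nabla^W$ off the factor $\nabla^W_{\bar\gamma}u^i_\alpha$. Here the pluricanonical hypothesis enters decisively: when $(\nabla\Theta)_{\alpha\bar\mu}=0$, the $(1,0)$-form $\Theta^\alpha$ is $\nabla^W$-parallel in the relevant components (or more precisely the covariant derivative of $\Theta$ contributes only terms that cancel against the Gauduchon co-closedness), so that no residual curvature-of-$\Theta$ term survives, and the integral collapses to something that is again a pure divergence or that is directly controlled by the Weyl harmonic equation~\eqref{CWEq}. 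I expect this is exactly the computation Sampson performs in the Kähler case ($\Theta\equiv 0$, so the term is simply absent), with the pluricanonical condition serving as the precise replacement that makes the argument go through verbatim. The main obstacle — and the step I would spend the most care on — is bookkeeping the $\Theta$-terms in this second integration by parts: tracking how $\nabla^W g^{\beta\bar\gamma}=-\Theta_{\bar\gamma}g^{\beta\bar\gamma}$, the symmetry of $\nabla^W du$, and $(\nabla\Theta)_{\alpha\bar\mu}=0$ conspire so that all of them cancel, leaving $\int_M\xi_\alpha\Theta^\alpha = 0$. In complex dimension two one checks separately (or deduces from the surface identity) that the same cancellation holds.

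Once $\int_M\xi_\alpha\Theta^\alpha=0$ is established, we have
$$
0=\int_M g'_{ij}(\nabla^W_{\bar\gamma}u^i_\alpha)(\nabla^W_{\bar\mu}u^j_\beta)g^{\alpha\bar\mu}g^{\beta\bar\gamma}
-\int_M R'_{ijlm}u^i_\alpha u^j_\beta u^l_{\bar\mu}u^m_{\bar\gamma}g^{\alpha\bar\mu}g^{\beta\bar\gamma},
$$
a sum of a non-negative and a non-positive quantity, forcing both to vanish pointwise. Vanishing of the first gives $\tilde\nabla''d'u=0$, i.e. $u$ is pluriharmonic. Vanishing of the second, combined with~\eqref{HSCurv}, gives $R'_{ijlm}u^i_\alpha u^j_\beta u^l_{\bar\mu}u^m_{\bar\gamma}g^{\alpha\bar\mu}g^{\beta\bar\gamma}=0$ pointwise; since the Hermitian sectional curvature form is non-positive, a polarization/linear-algebra argument (the usual argument that a non-positive Hermitian form with zero trace against a positive matrix must vanish on the corresponding subspace) upgrades this to $\langle R'(X,Y)\bar X,\bar Y\rangle=0$ for all $X,Y\in du(T_x^{1,0}M)$, which is the second assertion. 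This completes the proof.
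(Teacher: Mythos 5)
Your overall strategy --- integrate the Sampson-type identity \eqref{BSf} over $M$ with respect to the Gauduchon metric and use the pluricanonical condition to control the Lee-form term --- is the same as the paper's, but two of your key steps fail in a way that breaks the logic. First, the Weyl-divergence $g^{\alpha\bar\mu}\nabla^W_{\bar\mu}\xi_\alpha$ does \emph{not} integrate to zero: the full left-hand side of \eqref{BSf} equals $-d^*\xi-(n-2)\xi_\alpha\Theta^\alpha$, so integration gives $-(n-2)\int_M\xi_\alpha\Theta^\alpha\,d\mathit{Vol}=\int_M(\text{RHS})\,d\mathit{Vol}\geqslant 0$, i.e. $\int_M\xi_\alpha\Theta^\alpha\,d\mathit{Vol}\leqslant 0$ for $n\geqslant 3$. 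Your version, $\int_M\xi_\alpha\Theta^\alpha=\int_M(\text{RHS})$, drops the factor $-(n-2)$ and thereby reverses the sign of the one inequality the curvature hypothesis buys you; asserting that the Weyl-divergence integrates to zero is in effect assuming $\int_M\xi_\alpha\Theta^\alpha=0$, which is what you are trying to prove. Second, the second integration by parts does not produce a cancellation to zero: carrying it out with $(\nabla\Theta)_{\alpha\bar\mu}=0$ and $(\nabla^W\!\Theta)_{\beta\bar\mu}=(\nabla\Theta)_{\beta\bar\mu}+\Theta_\beta\Theta_{\bar\mu}-(1/2)g_{\beta\bar\mu}\abs{\Theta}^2$ yields $\int_M\xi_\alpha\Theta^\alpha\,d\mathit{Vol}=(n-1)\int_M g'_{ij}u^i_\alpha u^j_{\bar\gamma}\Theta^\alpha\Theta^{\bar\gamma}\,d\mathit{Vol}$, which is non-negative but by no means identically zero. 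The theorem follows by squeezing: the integral is $\leqslant 0$ from the first step and $\geqslant 0$ from the second, hence zero, hence $\int_M(\text{RHS})=0$ and both non-negative integrands vanish pointwise. As written, your two claims are mutually consistent with $\int_M(\text{RHS})>0$ and yield no conclusion.

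A smaller point: a Gauduchon metric on a complex surface is not automatically pluricanonical --- vanishing of the trace of the $(1,1)$-part of $\nabla\Theta$ does not force the whole $(1,1)$-part to vanish. The case $n=2$ is handled not by pluricanonicity but by the vanishing of the coefficient $n-2$ in the identity above, which removes the $\xi_\alpha\Theta^\alpha$ term from the integrated formula altogether. Your concluding polarization step is fine (the paper instead deduces the curvature vanishing directly from pluriharmonicity via the commutator identity \eqref{MainF}), but it only becomes available once the sign argument above is repaired.
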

\begin{remark*}
The vanishing of the curvature term is, in fact, a consequence of the
pluriharmonicity; see relation~\eqref{MainF}.
\end{remark*}
\begin{proof}
We prove the theorem by integrating the formula in Lemma~\ref{SamF}. 
More precisely, the left-hand side of formula~\eqref{BSf} equals
$$
-d^*\xi-(n-2)\xi_\alpha\Theta^\alpha,
$$
and by the curvature hypothesis each term in the right-hand side
is non-negative. In the case $n=2$, this directly implies the
statement. For the rest of the proof we suppose that $n\geqslant
3$ and, hence, by the curvature hypothesis, the integral
$$
\int_M(\xi_\alpha\Theta^\alpha)\mathit{dVol}
$$
is non-positive. We claim that for a pluricanonical metric $g\in c$
this integral is also non-negative. Indeed, since the form
$\xi_\alpha$ equals $g^{\beta\bar\gamma}\nabla^W_\beta
\psi_{\alpha\bar\gamma}$, the integration by parts yields
\begin{equation}
\label{II}
\int_M(\xi_\alpha\Theta^\alpha)\mathit{dVol}=
-(n-2)\int_M(\psi_{\alpha\bar\gamma}\Theta^\alpha\Theta^{\bar\gamma})
\mathit{dVol}-\int_M\psi_{\alpha\bar\gamma}(g^{\beta\bar\gamma}
g^{\alpha\bar\mu}\nabla^W_\beta\Theta_{\bar\mu})\mathit{Vol}.
\end{equation}
The relation between the Weyl and Levi-Civita
connections also yields the identity
$$
(\nabla^W\!\!\Theta)_{\beta\bar\mu}=(\nabla\Theta)_{\beta\bar\mu}+
\Theta_\beta\Theta_{\bar\mu}-(1/2)g_{\beta\bar\mu}\abs{\Theta}^2.
$$
Since the metric $g$ is pluricanonical, the term
$(\nabla\Theta)_{\beta\bar\mu}$ vanishes, and substituting the above
into formula~\eqref{II}, we obtain
$$
\int_M(\xi_\alpha\Theta^\alpha)\mathit{dVol}=
-(n-1)\int_M(\psi_{\alpha\bar\gamma}\Theta^\alpha\Theta^{\bar\gamma})
\mathit{Vol}+(1/2)\int_M(g^{\alpha\bar\gamma}\psi_{\alpha\bar\gamma})
\abs{\Theta}^2\mathit{dVol}.
$$
It is straightforward to see that the first integrand on the
right-hand side satisfies the relation
$$
\psi_{\alpha\bar\gamma}\Theta^\alpha\Theta^{\bar\gamma}=
(1/2)e(u)\abs{\Theta}^2-g'_{ij}u^i_\alpha u^j_{\bar\gamma}
\Theta^\alpha\Theta^{\bar\gamma}
$$
and the second equals $(n-1)e(u)\abs{\Theta}^2$. Thus, we arrive at
the formula
$$
\int_M(\xi_\alpha\Theta^\alpha)\mathit{dVol}=
(n-1)\int_M(g'_{ij}u^i_\alpha u^j_{\bar\gamma}\Theta^\alpha
\Theta^{\bar\gamma})\mathit{dVol}.
$$
Since the right-hand side here is real and non-negative, we are done.
\end{proof}

\subsection{Abelian subalgebras of symmetric spaces}
We specialise the considerations above to the case when $M'$ is a
locally symmetric space of non-compact type. This means that the
universal cover of $M'$ is a symmetric space $G/K$, where $G$ is a
connected Lie group whose Lie algebra $\mathfrak g$ is semi-simple and
non-compact, $K$ is a maximal compact subgroup of $G$, and $G/K$ is
given the invariant metric determined by the Killing form. We have the
Cartan decomposition $\mathfrak g=\mathfrak k\oplus\mathfrak p$,
where $\mathfrak k$ is the Lie algebra of $K$ and $\mathfrak p$ is a
$K$-invariant complement of $\mathfrak k$. The Killing form is
positive definite on $\mathfrak p$ and negative definite on $\mathfrak
k$; see~\cite{Hel}. We also have the relations
$$
[\mathfrak k,\mathfrak p]\subset\mathfrak p,\qquad
[\mathfrak p,\mathfrak p]\subset\mathfrak k.
$$
For our purposes it will be harmless to identify the tangent space
$T_uM'$ at any fixed point $u\in M'$ with $\mathfrak p$. Under this
identification the curvature tensor is given by
$$
R'(X,Y)Z=[[X,Y],Z],\qquad X,Y,Z\in\mathfrak p,
$$
and the Hermitian sectional curvature on $T^cM'$ is given by
$$
\langle R'(X,Y)\bar X,\bar Y\rangle=\langle[X,Y],[\bar X,\bar
  Y]\rangle,\qquad X,Y\in\mathfrak p^c.
$$
The latter is non-positive, and vanishes if and only if $[X,Y]=0$,
since the Killing form is negative definite on $\mathfrak k$.

Now we can re-phrase Theorem~\ref{pluri} in the following form:
\begin{theorem}
\label{AP}
Let $(M,c,\nabla^W)$ be a K\"ahler-Weyl manifold and $(M',g')$ be a
locally symmetric space of non-compact type. Let $u:M\to M'$ be a
Weyl harmonic map. If $M$ has complex dimension $2$ or admits a
pluricanonical metric, then $u$ is pluriharmonic and for any $x\in M$
the image of $T^{1,0}_xM$ under $du(x)$ is an Abelian subspace of
$\mathfrak p^c$.
\end{theorem}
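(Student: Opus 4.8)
The plan is to derive this as an immediate consequence of Theorem~\ref{pluri} together with the Lie-theoretic description of the curvature of $M'$ recalled above. The only preliminary point is to verify that a locally symmetric space $M'$ of non-compact type has non-positive Hermitian sectional curvature, so that Theorem~\ref{pluri} applies. Fixing a point of $M'$ and identifying its tangent space with $\mathfrak p$, for $X,Y\in\mathfrak p^c$ the invariance of the Killing form gives $\langle R'(X,Y)\bar X,\bar Y\rangle=\langle[X,Y],[\bar X,\bar Y]\rangle$; since $[\mathfrak p,\mathfrak p]\subset\mathfrak k$ the element $[X,Y]$ lies in $\mathfrak k^c$, on which the complex-bilinearly extended inner product restricts from the negative definite Killing form of $\mathfrak k$. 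Writing $[X,Y]=A+iB$ with $A,B\in\mathfrak k$ and using that $[\bar X,\bar Y]=\overline{[X,Y]}=A-iB$ because the bracket is defined over $\mathbf R$, one gets $\langle[X,Y],[\bar X,\bar Y]\rangle=\langle A,A\rangle+\langle B,B\rangle\leqslant 0$, which is precisely~\eqref{HSCurv}. Hence Theorem~\ref{pluri} yields that $u$ is pluriharmonic, $\tilde\nabla''d'u=0$, and $\langle R'(X,Y)\bar X,\bar Y\rangle=0$ for all $X,Y$ in the image $du(x)(T_x^{1,0}M)$ and all $x\in M$.

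It then remains to read off the abelianness of this image. By the computation above, $\langle R'(X,Y)\bar X,\bar Y\rangle=\langle A,A\rangle+\langle B,B\rangle$ with $A,B\in\mathfrak k$, so its vanishing forces $\langle A,A\rangle=\langle B,B\rangle=0$; since the Killing form is \emph{strictly} negative definite on $\mathfrak k$ this gives $A=B=0$, i.e. $[X,Y]=0$. Because the bracket is bilinear and $du(x)(T_x^{1,0}M)$ is a linear subspace of $\mathfrak p^c$, it follows that all brackets of elements of $du(x)(T_x^{1,0}M)$ vanish, which is exactly the statement that it is an Abelian subspace of $\mathfrak p^c$.

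I expect no genuine obstacle here beyond invoking Theorem~\ref{pluri}; the argument is the routine dictionary between the Riemannian geometry of the symmetric space $M'$ and its Lie algebra $\mathfrak g$. The points that deserve attention are the conventions in the complexification — using the complex-bilinear rather than the Hermitian extension of the Killing form, the identity $\overline{[X,Y]}=[\bar X,\bar Y]$, and above all the strictness of the negative-definiteness of the Killing form on $\mathfrak k$, which is what upgrades the inequality $\langle R'(X,Y)\bar X,\bar Y\rangle\leqslant 0$ to the equivalence that the curvature term vanishes if and only if $[X,Y]=0$.
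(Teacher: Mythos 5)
Your proposal is correct and follows essentially the same route as the paper: the paper states Theorem~\ref{AP} as a direct re-phrasing of Theorem~\ref{pluri}, having already recorded in the preceding discussion that $\langle R'(X,Y)\bar X,\bar Y\rangle=\langle[X,Y],[\bar X,\bar Y]\rangle$ is non-positive and vanishes if and only if $[X,Y]=0$ because the Killing form is negative definite on $\mathfrak k$. Your write-up merely fills in the routine verification (the decomposition $[X,Y]=A+iB$ and the strictness of the definiteness), which the paper leaves implicit.
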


Denote by $\mathfrak a$ the image of $T^{1,0}_xM$ under $du(x)$. The
corresponding image of the real tangent space $T_xM$ is the subspace
of real vectors of the space $\mathfrak a+\bar{\mathfrak a}$, so that
$$
\dim_{\mathbf R}du(T_xM)=\dim_{\mathbf C}(\mathfrak a+\bar{\mathfrak
  a})\leqslant 2\dim_{\mathbf C}\mathfrak a.
$$
Combining this inequality with Theorem~\ref{AP}, we obtain the
following estimate:
\begin{equation}
\label{RE}
\rank(du)\leqslant 2\max\{\dim(\mathfrak a): \mathfrak a\subset\mathfrak
p^c, [\mathfrak a,\mathfrak a]=0\}.
\end{equation}
Observe that if $G/K$ is a Hermitian symmetric space, then
corresponding to an invariant complex structure on $G/K$ we have the
decomposition 
$$
\mathfrak p^c=\mathfrak p^{1,0}\oplus\mathfrak p^{0,1}.
$$
Recall that the complex structure on $G/K$ belongs to the centre of
the linear isotropy algebra~\cite[VIII.4.5]{Hel} and, hence, extends
to $\mathfrak g^c$ as a derivation which vanishes on $\mathfrak
k^c$. This together with the relation $[\mathfrak p,\mathfrak p]
\subset\mathfrak k$ implies that $\mathfrak p^{1,0}$ is an Abelian
subspace of $\mathfrak p^c$.\footnote{As was pointed out by
  D.~Alekseevsky, the argument in~\cite{CT89} used to show that
  $\mathfrak p^{1,0}$ is an Abelian subspace is incorrect.}

The following algebraic theorem is due to Carlson and
Toledo~\cite{CT89}.
\begin{abel1}
Let $G/K$ be a symmetric space of non-compact type that does not
contain the hyperbolic plane as a factor. Let $\mathfrak
a\subset\mathfrak p^c$ be an Abelian subspace. Then $\dim(\mathfrak a)
\leqslant (1/2)\dim(\mathfrak p^c)$. Besides, the equality holds if
and only if $G/K$ is Hermitian symmetric and $\mathfrak a=\mathfrak
p^{1,0}$ for an invariant complex structure on $G/K$.
\end{abel1}

As an immediate consequence we have the following statement.
\begin{corollary}
\label{rank1}
Let $M$ and $M'$ be as in Theorem~\ref{AP} and $u:M\to M'$ be a
Weyl-harmonic map. Suppose also that $M'$ is not locally Hermitian
symmetric. If $M$ has complex dimension $2$ or admits a pluricanonical
metric, then for every $x\in M$, the rank $du(x)$ is strictly
smaller than the dimension of $M'$.
\end{corollary}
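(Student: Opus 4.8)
The plan is to combine Theorem~\ref{AP} with the rank estimate~\eqref{RE} and the Abelian subspaces theorem~I, the only real subtlety being that the universal cover $G/K$ of $M'$ may a priori contain hyperbolic plane factors, to which the latter theorem does not directly apply.

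First, by Theorem~\ref{AP} the hypothesis on $M$ ensures that $u$ is pluriharmonic and that $\mathfrak a:=du(x)(T^{1,0}_xM)$ is an Abelian subspace of $\mathfrak p^c$ for every $x\in M$. As recorded in the discussion preceding~\eqref{RE}, this gives $\rank du(x)=\dim_{\mathbf C}(\mathfrak a+\bar{\mathfrak a})\leqslant 2\dim_{\mathbf C}\mathfrak a$, where I identify $T_{u(x)}M'$ with $\mathfrak p$. Arguing by contradiction, suppose that $\rank du(x)=\dim M'=\dim_{\mathbf C}\mathfrak p^c$ at some point; then the subspace $\mathfrak a+\bar{\mathfrak a}$ must be all of $\mathfrak p^c$, so that $\dim_{\mathbf C}\mathfrak a\geqslant\tfrac12\dim_{\mathbf C}\mathfrak p^c$.

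Next I pass to the decomposition $G/K=\prod_j G_j/K_j$ into irreducible factors, with $\mathfrak p^c=\bigoplus_j\mathfrak p_j^c$. Since $[\mathfrak p_j,\mathfrak p_k]=0$ for $j\neq k$, the coordinate projections $\mathfrak a_j:=\pr_j(\mathfrak a)$ satisfy $[\mathfrak a_j,\mathfrak a_j]=\pr_j[\mathfrak a,\mathfrak a]=0$ and $\mathfrak a\subseteq\bigoplus_j\mathfrak a_j$, whence $\dim_{\mathbf C}\mathfrak a\leqslant\sum_j\dim_{\mathbf C}\mathfrak a_j$. For a factor $G_j/K_j$ that is not the hyperbolic plane, the Abelian subspaces theorem~I yields $\dim_{\mathbf C}\mathfrak a_j\leqslant\tfrac12\dim_{\mathbf C}\mathfrak p_j^c$; for a hyperbolic plane factor one has $\dim_{\mathbf C}\mathfrak p_j^c=2$ while $\mathfrak p_j^c$ is not Abelian (as $[\mathfrak p_j,\mathfrak p_j]=\mathfrak k_j\neq 0$), so $\dim_{\mathbf C}\mathfrak a_j\leqslant 1$ and the same bound holds. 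Summing over $j$ gives $\dim_{\mathbf C}\mathfrak a\leqslant\tfrac12\dim_{\mathbf C}\mathfrak p^c$.

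Comparing this with the lower bound from the previous paragraph forces equality throughout; in particular $\dim_{\mathbf C}\mathfrak a_j=\tfrac12\dim_{\mathbf C}\mathfrak p_j^c$ for every $j$. For the non-hyperbolic-plane factors the equality clause of the Abelian subspaces theorem~I then forces $G_j/K_j$ to be Hermitian symmetric (with $\mathfrak a_j=\mathfrak p_j^{1,0}$ for a suitable invariant complex structure), and the hyperbolic plane factors are Hermitian symmetric as well. Thus every irreducible factor, and hence $G/K$ itself, is Hermitian symmetric, contradicting the assumption that $M'$ is not locally Hermitian symmetric. Therefore $\rank du(x)<\dim M'$ at every point. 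The step I expect to require the most care is exactly this reduction to irreducible factors together with the ad hoc treatment of hyperbolic plane factors, since the Abelian subspaces theorem~I is stated only for symmetric spaces without such factors; the remainder is bookkeeping with~\eqref{RE}.
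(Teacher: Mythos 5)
Your proof is correct and follows the paper's intended route: the paper derives this corollary as an ``immediate consequence'' of Theorem~\ref{AP}, the rank estimate~\eqref{RE}, and the Abelian subspaces theorem~I, exactly the ingredients you combine. Your extra step --- decomposing into irreducible factors and checking the bound $\dim\mathfrak a_j\leqslant\tfrac12\dim\mathfrak p_j^c$ separately on any hyperbolic plane factors (where the cited theorem does not formally apply but the inequality still holds) --- is a detail the paper leaves implicit, and you are right that it is the only point requiring care.
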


Above we used only the case of strict inequality in the Abelian
subspaces theorem. The case of equality yields the following extension
of Siu's result to Weyl-harmonic maps.
\begin{theorem}
\label{Siu1}
Let $(M,c,\nabla^W)$ be a K\"ahler-Weyl manifold and $(M',g')$
be a locally Hermitian symmetric space of non-compact type whose
universal cover does not contain the hyperbolic plane as a factor. Let
$u:M\to M'$ be a Weyl harmonic map such that the rank of $du(x)$
equals the dimension of $M'$ for some $x\in M$. If $M$ has complex
dimension $2$ or admits a pluricanonical metric, then $u$ is
holomorphic with respect to an invariant complex structure on $M'$.
\end{theorem}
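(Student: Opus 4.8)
The plan is to combine Theorem~\ref{pluri} (pluriharmonicity) with the equality case of the Abelian subspaces theorem~I to upgrade pointwise information on $du$ into the statement that $du$ intertwines the complex structures. First I would fix a point $x\in M$ where $\rank du(x)=\dim M'$. By Theorem~\ref{AP} the subspace $\mathfrak a=du(x)(T^{1,0}_xM)\subset\mathfrak p^c$ is Abelian, and since $du(x)$ has maximal rank we have $\dim_{\mathbf R}du(T_xM)=\dim M'=\dim_{\mathbf C}\mathfrak p^c$, which forces $\dim_{\mathbf C}(\mathfrak a+\bar{\mathfrak a})=\dim_{\mathbf C}\mathfrak p^c$ and hence $\dim\mathfrak a\geqslant(1/2)\dim\mathfrak p^c$. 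The Abelian subspaces theorem~I then gives equality, so $G/K$ is Hermitian symmetric (which we already know) and $\mathfrak a=\mathfrak p^{1,0}$ for some invariant complex structure $J'$ on $M'$. Thus $du(x)$ maps $T^{1,0}_xM$ onto $\mathfrak p^{1,0}=T^{1,0}_{u(x)}M'$, i.e. $du(x)$ is complex linear with respect to $J'$ at this one point.

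The next step is to propagate this from a single point to all of $M$. I would use pluriharmonicity: $\tilde\nabla'' d'u=0$ means that the $(1,0)$-part $d'u$, viewed as a section of $\Hom(T^{1,0}M,u^*T^cM')$, is parallel along anti-holomorphic directions for the relevant connection. Composing with the (parallel) projection $u^*T^cM'\to u^*\mathfrak p^{0,1}$-factor determined by $J'$ measures the failure of $du$ to be $J'$-complex-linear; call this tensor $\sigma$. Because $\nabla^W$ preserves $J$ on $M$ and $\nabla'$ preserves the parallel splitting $T^cM'=T^{1,0}M'\oplus T^{0,1}M'$ coming from the invariant $J'$ on the symmetric space, $\sigma$ is again anti-holomorphically parallel, so $\bar\partial|\sigma|^2$-type reasoning (or directly the unique continuation in Proposition for pseudo-harmonic maps, applied to the system satisfied by $\sigma$) shows that $\sigma$ vanishes on a neighbourhood of $x$ and hence identically. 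Equivalently, $du$ is complex linear everywhere, i.e. $u$ is holomorphic with respect to $J'$.

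The main obstacle is the middle propagation step: one must make precise that the rank-maximal set is open and that the invariant complex structure $J'$ selected at $x$ is \emph{locally constant}, so that $\sigma$ is defined consistently on an open set and one genuinely has a parallel (or elliptic-unique-continuation) object to work with. Concretely, on the open set where $du$ has maximal rank, $\mathfrak a_x$ varies smoothly and is a maximal Abelian subspace of the right dimension; the equality case of Abelian subspaces theorem~I says each such subspace is $\mathfrak p^{1,0}$ for \emph{some} invariant complex structure, and one must check this assignment $x\mapsto J'_x$ is continuous, hence locally constant (on a symmetric space the invariant complex structures form a discrete—indeed finite—set on each irreducible factor). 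Once $J'$ is fixed locally, the parallel-transport / Bochner argument closes as above; combined with unique continuation for pseudo-harmonic maps this gives holomorphicity on all of $M$ for a single invariant complex structure $J'$ on $M'$.
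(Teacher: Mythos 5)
Your first step coincides with the paper's: at a point of maximal rank, $\mathfrak a=du(x)(T^{1,0}_xM)$ is an Abelian subspace of $\mathfrak p^c$ of half its dimension, so the equality case of the Abelian subspaces theorem~I forces $\mathfrak a=\mathfrak p^{1,0}$ for an invariant complex structure $J'$, whence $du(x)$ is $J'$-complex linear. The paper then finishes in two lines: maximal rank is an open condition, so $u$ is holomorphic on an open set, and the unique continuation lemma of Appendix~B (Aronszajn's theorem applied to the second order system satisfied by the components of $\bar\partial u$) gives holomorphicity everywhere. Your propagation step is genuinely different: you exploit pluriharmonicity to regard $\sigma=\pi^{0,1}_{J'}\circ d'u$ as a $d^{\prime\prime}_{\nabla'}$-closed, hence holomorphic, section of the Koszul--Malgrange bundle (the projection $\pi^{0,1}_{J'}$ is parallel because the invariant $J'$ is parallel on the symmetric space, and the integrability $(d^{\prime\prime}_{\nabla'})^2=0$ holds by the curvature vanishing in Theorem~\ref{pluri}), and then the identity theorem for holomorphic sections kills $\sigma$ once it vanishes on an open set. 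This works and is arguably more transparent here, since it uses the stronger pluriharmonic input, whereas the Appendix~B lemma applies to any Hermitian harmonic map. Two cautions. First, your fallback suggestion to invoke the Proposition on unique continuation of pseudo-harmonic maps is not the right tool: that statement compares two solutions of the same second order system and says nothing about the holomorphicity defect $\sigma$; the ready-made substitute in the paper is the Appendix~B lemma, which you should cite if you do not carry out the holomorphic-section argument in full. Second, your point about the local constancy of the assignment $x\mapsto J'_x$ on the maximal-rank set is well taken and is in fact glossed over in the paper's proof; discreteness of the set of invariant complex structures on each irreducible factor settles it exactly as you indicate.
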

\begin{proof}
Since $du(T^{1,0}_xM)$ is an Abelian subspace of the half dimension,
it must be $\mathfrak p^{1,0}$ for an invariant complex structure on
$M'$. Thus $du(x)$ maps $T^{1,0}_xM$ into $\mathfrak p^{1,0}$ and,
hence, is complex linear. Since having a maximal rank is an open
condition, the linear operator $du(x)$ is complex linear on an open
subset of $M$; i.e. the map $u$ is holomorphic on an open subset of
$M$. Now the version of Siu's unique continuation theorem for
Hermitian harmonic maps (see Appendix~B) implies that $u$ is
holomorphic everywhere with respect to an invariant complex structure
on $M'$.
\end{proof}

For each symmetric space $G/K$ let $\nu(G/K)$ denote the maximum
complex dimension of an Abelian subspace of $\mathfrak p^c$. By
Abelian subspaces theorem~I, we have the inequality $\nu(G/K)<(1/2)
\dim(G/K)$ provided $G/K$ is not Hermitian symmetric. For hyperbolic
spaces over various division algebras the following theorem, due
to~\cite{CT89} and~\cite{CH}, gives more precise information.
\begin{abel2}
Let $H^m_{\mathbf K}$ denote the hyperbolic space of $\mathbf
K$-dimension $m$ over the division algebra $\mathbf K$, where $\mathbf
K=\mathbf R$, or $\mathbf C$, or the quaternions $\mathbf H$, or the
octonions $\mathbf O$. Let $\mathfrak a$ be an Abelian subspace of
$\mathfrak p^c$ where $\mathfrak g=\mathfrak k\oplus\mathfrak p$ is
the Cartan decomposition of the group of isometries of $H^m_{\mathbf
  K}$. Then:
\begin{itemize}
\item[(i)] if $\mathbf K=\mathbf R$, $\nu(H^m_{\mathbf K})=1$.
\item[(ii)] if $\mathbf K=\mathbf C$, and $\dim(\mathfrak a)>1$, then
  $\mathfrak a\subset\mathfrak p^{1,0}$ for one of the two invariant
  complex structures on $H^m_{\mathbf C}$; besides, we have
  $\nu(H^m_{\mathbf K})=m$.
\item[(iii)] if $\mathbf K=\mathbf H$, $\nu(H^m_{\mathbf K})=m$.
\item[(iv)] if $\mathbf K=\mathbf O$, $\nu(H^2_{\mathbf K})=2$.
\end{itemize}
\end{abel2}
\begin{corollary}
\label{rank2}
Let $M$ be a compact K\"ahler-Weyl manifold which has complex
dimension $2$ or admits a pluricanonical metric. Let $u:M\to M'$ be a
Weyl-harmonic map, where $M'$ is a quotient of $H^m_{\mathbf K}$.
Then:
\begin{itemize}
\item[(i)] if $\mathbf K=\mathbf R$, the rank of $u$ is at most two.
\item[(ii)] if $\mathbf K=\mathbf C$ and the rank of $u$ exceeds two
  at some point $x\in M$, then $u$ is holomorphic with respect to one
  of the two complex structures on $M'$.
\item[(iii)] if $\mathbf K=\mathbf H$, the rank of $u$ is at most $2m$,
  one-half of the (real) dimension of $M'$.
\item[(iv)] if $\mathbf K=\mathbf O$, the rank of $u$ is at most four.
\end{itemize}
\end{corollary}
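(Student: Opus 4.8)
The plan is to derive all four statements from the rank estimate~\eqref{RE}, which already incorporates Theorem~\ref{AP}: under the hypothesis of the corollary (complex dimension $2$, or a pluricanonical metric) the map $u$ is pluriharmonic and, for every $x\in M$, the image $\mathfrak a=du(x)(T^{1,0}_xM)$ is an Abelian subspace of $\mathfrak p^c$, so that $\rank(du(x))\leqslant 2\nu(G/K)$, where $G/K$ is the universal cover of $M'$ and $\nu(G/K)$ is the maximal complex dimension of an Abelian subspace of $\mathfrak p^c$. The arithmetic parts~(i), (iii), (iv) are then read off directly from Abelian subspaces theorem~II, while part~(ii) additionally uses its equality case together with a globalisation argument.

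For part~(i) I would simply invoke $\nu(H^m_{\mathbf R})=1$, so that~\eqref{RE} gives $\rank(du)\leqslant 2$ at every point. For part~(iii), the hyperbolic space $H^m_{\mathbf H}$ has real dimension $4m$ and $\nu(H^m_{\mathbf H})=m$ by Abelian subspaces theorem~II(iii), whence $\rank(du)\leqslant 2m=\tfrac12\dim M'$. For part~(iv), Abelian subspaces theorem~II(iv) gives $\nu(H^2_{\mathbf O})=2$, so $\rank(du)\leqslant 4$. No further work is needed in these three cases.

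Part~(ii) is the one requiring care. Suppose $\rank(du(x))>2$, hence $\rank(du(x))\geqslant 3$, at some point $x\in M$. From the discussion preceding~\eqref{RE}, $\dim_{\mathbf R}du(T_xM)=\dim_{\mathbf C}(\mathfrak a+\bar{\mathfrak a})\leqslant 2\dim_{\mathbf C}\mathfrak a$, so $\dim_{\mathbf C}\mathfrak a\geqslant 2$. By Abelian subspaces theorem~II(ii) we then have $\mathfrak a\subset\mathfrak p^{1,0}$ for one of the two invariant complex structures on $H^m_{\mathbf C}$. Arguing exactly as in the proof of Theorem~\ref{Siu1}: $du(x)$ maps $T^{1,0}_xM$ into $\mathfrak p^{1,0}$, and since $du$ is real we also get $du(T^{0,1}_xM)=\overline{\mathfrak a}\subset\mathfrak p^{0,1}$, so $du(x)$ is complex linear, i.e.\ $u$ is holomorphic at $x$. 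As the condition $\rank(du)>2$ is open, $u$ is holomorphic on an open subset of $M$; the choice between the two invariant complex structures is locally constant there, hence fixed once and for all by connectedness of $M$. Finally, the version of Siu's unique continuation theorem for Hermitian harmonic maps (Appendix~B) promotes holomorphy on an open set to holomorphy on all of $M$ with respect to that invariant complex structure.

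The main obstacle is not in the algebra but in case~(ii): one must ensure that the local holomorphy produced by the equality case of the Abelian subspaces theorem is taken with respect to a single, globally defined invariant complex structure on $M'$ and then genuinely extends to all of $M$ — both points being settled by connectedness of $M$ together with the unique continuation theorem of Appendix~B. Everything else is a direct substitution of the values $\nu(H^m_{\mathbf K})$ from Abelian subspaces theorem~II into the estimate~\eqref{RE}.
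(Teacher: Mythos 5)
Your proposal is correct and follows essentially the same route as the paper: cases (i), (iii), (iv) are read off from the rank estimate~\eqref{RE} with the values of $\nu(H^m_{\mathbf K})$ from Abelian subspaces theorem~II, and case (ii) is handled exactly as Theorem~\ref{Siu1} is deduced from Abelian subspaces theorem~I, via local complex linearity, openness of the maximal-rank condition, and the unique continuation result of Appendix~B. The only difference is that you spell out the details (reality of $du$, local constancy of the choice of invariant complex structure) that the paper leaves implicit.
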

\begin{proof}
The cases $(i)$, $(iii)$, and $(iv)$ are an immediate application of
the rank estimate~\eqref{RE}. The case~$(ii)$ follows in the same
fashion as Theorem~\ref{Siu1} follows from the Abelian subspaces
theorem~I.
\end{proof}

\section{Applications to conformal K\"ahler geometry}
\label{top}
\subsection{Topological comparisons of K\"ahler-Weyl manifolds
  and locally symmetric spaces.}
In this section we collect applications of the results above to the
topology of K\"ahler-Weyl manifolds. Our conclusions are extensions of
the results for K\"ahler manifolds to a significantly larger class of
complex manifolds. We start with the following rigidity theorem.
\begin{theorem}
\label{Siu2}
Let $(M,c,\nabla^W)$ be a K\"ahler-Weyl manifold and $(M',g')$ be a
locally Hermitian symmetric space of non-compact type whose universal
cover does not contain the hyperbolic plane as a factor. Suppose that
there exists a homotopy equivalence $u:M\to M'$. If $M$ has complex
dimension $2$ or admits a pluricanonical metric, then $u$ is homotopic
to a biholomorphism for some invariant complex structure on $M'$.
\end{theorem}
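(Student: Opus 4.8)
The plan is to follow the standard Siu–Sampson strategy, now run through the Weyl-harmonic machinery developed above. First I would replace $u$ by a Weyl-harmonic map: since $M'$ is a locally Hermitian symmetric space of non-compact type, it is negatively curved in the weak sense, and in particular its universal cover, being a bounded symmetric domain, has non-vanishing Euler characteristic of the right sign (Gromov); so Corollary~\ref{Euler} applies and the homotopy equivalence $u$ — which is certainly non-trivial on top cohomology — is homotopic to a Weyl-harmonic map, which I continue to call $u$. The existence hypotheses of Theorem~\ref{ExTh} are met because $u$ has non-zero degree, so $u^*TM'$ has non-trivial Euler class and hence no non-trivial parallel section; thus $u$ is in fact the unique Weyl-harmonic representative of its class (Corollary after Theorem~\ref{UniTh}).

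Next I would show that $u$ has maximal rank $\dim M'$ at some point. Here is where the homotopy-equivalence hypothesis is used in earnest. Suppose not: then by Corollary~\ref{rank1} (or rather, its proof) combined with the pluriharmonicity of Theorem~\ref{AP}, at every point $x$ the image $\mathfrak a_x=du(x)(T^{1,0}_xM)$ is an Abelian subspace of $\mathfrak p^c$ of dimension $<\tfrac12\dim\mathfrak p$ unless $M'$ is Hermitian symmetric with $\mathfrak a_x=\mathfrak p^{1,0}$; in the Hermitian symmetric case the Abelian subspaces theorem~I gives $\rank(du(x))\le\dim M'$ with equality only at the good configuration. If $\rank(du)<\dim M'$ everywhere, then the rank estimate~\eqref{RE} forces $\rank(du)$ to be strictly below $\dim M'$ at all points, so $u$ is nowhere a local diffeomorphism; by Sard, $u$ has degree zero, contradicting that a homotopy equivalence has degree $\pm 1$. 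Hence $\rank(du(x_0))=\dim M'$ for some $x_0$.

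Now Theorem~\ref{Siu1} applies directly: $u$ is holomorphic with respect to some invariant complex structure $J'$ on $M'$. It remains to promote ``holomorphic of non-zero degree between compact complex manifolds of the same dimension'' to ``biholomorphism''. A holomorphic map of non-zero degree is surjective and generically a covering; more precisely, the set where $du$ drops rank is a proper analytic subvariety $V\subset M$, and off $u(V)$ the map $u$ is a holomorphic covering of degree $d=\deg u=\pm1$, hence a biholomorphism there. Since $u$ is a homotopy equivalence it induces an isomorphism on $\pi_1$, so this covering has one sheet, i.e. $d=1$; then $u$ is a degree-one proper holomorphic map between compact complex manifolds of the same dimension, and such a map is a biholomorphism (it is bimeromorphic and a homeomorphism, so an isomorphism of complex manifolds). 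This gives the conclusion, with $u$ homotopic — indeed equal, after the initial homotopy — to a biholomorphism for the invariant complex structure $J'$ on $M'$.

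The main obstacle is the second step: extracting a point of maximal rank without any extra topological input beyond being a homotopy equivalence. The degree argument is clean, but one must be careful that ``$\rank(du)<\dim M'$ on an open set'' genuinely propagates — this is where real-analyticity of pluriharmonic maps into locally symmetric spaces, together with the unique continuation results of Section~\ref{sect1}, is needed to conclude that the rank is sub-maximal everywhere rather than merely on a closed set. A secondary subtlety is the passage from holomorphic-of-degree-one to biholomorphism, which uses that $u$ induces a $\pi_1$-isomorphism to rule out multi-sheeted behaviour away from the ramification locus; I would invoke Siu's original argument at this last stage essentially verbatim, since once pluriharmonicity and the maximal-rank point are in hand the situation is identical to the Kähler case treated in~\cite{Siu}.
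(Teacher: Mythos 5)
Your first three steps match the paper: you pass to a Weyl-harmonic representative via Corollary~\ref{Euler}, observe that a homotopy equivalence cannot have rank $<\dim M'$ everywhere (degree $\pm1$ plus Sard), and invoke Theorem~\ref{Siu1} to get holomorphy with respect to an invariant complex structure. (Two small slips there: it is $M'$, not its universal cover, that has non-vanishing Euler characteristic by Gromov's theorem; and your worry about rank propagation is unnecessary, since Theorem~\ref{Siu1} only needs maximal rank at a single point.)

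The genuine gap is in your last step. The claim that a degree-one proper holomorphic map between compact complex manifolds of the same dimension is automatically a biholomorphism is false: a blow-down $\widetilde X\to X$ is degree one, holomorphic, and a biholomorphism off the exceptional divisor, yet not a biholomorphism. Being a biholomorphism off the ramification locus and having one ``sheet'' there does not let you extend the inverse across the locus; and $u$ is a homotopy equivalence, not a homeomorphism, so your parenthetical argument does not apply. What actually rules out a non-empty ramification divisor $C=\{\det(\partial u/\partial z)=0\}$ is homological: $u(C)$ has complex codimension at least two in $M'$ (by the removable-singularity argument), so $u_*[C]=0$ in $H_{2n-2}(M',\mathbf R)$, and since $u_*$ is an isomorphism one needs $[C]=0$ in $H_{2n-2}(M,\mathbf R)$ --- which contradicts $C\neq\varnothing$ \emph{only if} $M$ is K\"ahler, so that $[C]$ pairs non-trivially with $\omega^{n-1}$. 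On a general complex or merely locally conformal K\"ahler manifold an effective divisor can be null-homologous, and your argument cannot exclude it. This is precisely why the paper inserts a substantial intermediate step that your proposal omits entirely: it proves that the holomorphic homotopy equivalence forces $M$ to admit a global K\"ahler metric (for surfaces via evenness of $b_1$; in higher dimension via the anti-Lee form $\Theta^\dagger=\Theta\circ J$, the $\partial\bar\partial$-exactness of $d\Lambda^\dagger$ pulled back from the K\"ahler target, and Vaisman's conformal-change computation showing the new Lee form vanishes after integration). Only with that K\"ahler structure in hand does Siu's ramification-divisor argument close the proof.
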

\begin{proof}
Since the homotopy equivalence $u$ induces the isomorphism on the
cohomology, Cor.~\ref{Euler} implies that $u$ is homotopic to a
Weyl-harmonic map, which we also denote by $u$. Since $u$ can not have
the rank less than the dimension of $M'$ everywhere, by
Theorem~\ref{Siu1} it is holomorphic. Now we claim that the existence
of the holomorphic homotopy equivalence $u:M\to M'$ implies that $M$
admits a global K\"ahler metric.

First, suppose that $M$ is a complex surface. Then, since it is
homotopy equivalent to a Kahler manifold, its first Betti number is
even and it has to be itself K\"ahler, see~\cite[Sect.~1]{Am}. Now
suppose that the complex dimension $n>2$ and, hence, $M$ is locally
conformal K\"ahler. Choose a background metric $g\in c$ and let
$\Theta$ be its Lee form. It is straightforward to show that the {\it
  anti-Lee} form $\Theta^\dagger=\Theta\circ J$ satisfies the relation
$d\Theta^\dagger=2i\bar\partial \Theta^{1,0}$, i.e. $d\Theta^\dagger$
is a real exact form of type $(1,1)$. Since $u$ is holomorphic and is
an isomorphism on the first cohomology, replacing the metric on $M$ by
the conformal one we can suppose that $\Theta$ is the pull-back of a
closed $1$-form $\Lambda$ on $M'$ and $\Theta^\dagger$ is the
pull-back of $\Lambda^\dagger$, where $\Lambda^\dagger=\Lambda\circ
J$. But on a K\"ahler manifold the $(1,1)$-form $d\Lambda^\dagger$ is
$\partial\bar\partial$-exact and, hence, so is the form
$d\Theta^\dagger$. In other words, there exists a function $\varphi$
on $M$ such that $d\Theta^\dagger=2i\partial\bar\partial\varphi$.
Now the computation of Vaisman in~\cite{Va1} shows that the metric $h=
\exp(\varphi)g$ is in fact K\"ahler. We describe it below for the
completeness.

Let $\tilde\Theta$ be the Lee form of the new metric $h$. The above
yields $\bar\partial\Theta^{1,0}=\partial\bar\partial\varphi$ and,
hence, the $(1,0)$-part of $\tilde\Theta$ is a holomorphic form,
$$
\bar\partial\tilde\Theta^{1,0}=\bar\partial\Theta^{1,0}+\bar\partial
\partial\varphi=0.
$$
Since the Weyl connection is complex, this means that
$\nabla^W_{\bar\mu}\tilde\Theta_\alpha=0$ and in terms of the
Levi-Civita connection of $h$ reads as
$$
-\tilde\Theta_{\bar\mu}\tilde\Theta_\alpha+h_{\alpha\bar\mu}
h^{\beta\bar\gamma}\tilde\Theta_\alpha\tilde\Theta_{\bar\gamma}=0.
$$
The latter yields the relation
$$
d^*_h\tilde\Theta^{1,0}=
-((n-1)/2)\abs{\tilde\Theta}_h^2.
$$
Now the integration over $M$ implies that $\tilde\Theta$ vanishes,
demonstrating the claim.

Finally, we show that $u$ is a biholomorphism. Suppose the
contrary. Then its singular locus $C$, defined by $\det(\partial
u/\partial z)$=0, is a complex-analytic subvariety of $M$ and defines
a non-trivial homology class in $H_{2n-2}(M,\mathbf R)$. By the
argument of Siu~\cite{Siu}, the image $u(C)$ is a subvariety of $M'$
of complex codimension at least two. More precisely, for otherwise
there exists a point $p\in C$ that is isolated in
$u^{-1}(u(p))$. Using a local coordinate chart $(z^\alpha)$ on $M$ at
$p$, we see that the Riemann removable singularity theorem applies to
$z^\alpha\circ u^{-1}$ on $V\backslash u(C)$ for some open
neighbourhood $V$ of $u(p)$ in $M'$ and $u$ is locally diffeomorphic
at $p$. This contradicts to the supposition that $p\in C$. Thus the
image $u(C)$ is null-homologous in $H_{2n-2}(M',\mathbf R)$,
contradicting to the fact that $u$ is a homotopy equivalence.
\end{proof}

The rank estimates in Sect.~\ref{PLURI} imply the following
topological restrictions on maps from compact K\"ahler-Weyl manifolds
to locally symmetric spaces of non-Hermitian type.
\begin{corollary}
\label{top1}
Let $(M,c,\nabla^W)$ be a K\"ahler-Weyl manifold and $(M',g')$ be an
orientable compact locally symmetric space of non-compact
type. Suppose that $M'$ is not locally Hermitian symmetric and its
Euler characteristic does not vanish (for example, one of the spaces in
Cor.~\ref{Euler}). If $M$ has complex dimension $2$ or admits a
pluricanonical metric, then any map $v:M\to M'$ is trivial on the top
cohomology $H^{n'}(M',\mathbf Z)\to H^{n'}(M,\mathbf Z)$. 
\end{corollary}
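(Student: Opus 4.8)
The plan is to argue by contradiction, exactly as in the proofs of Corollary~\ref{12}(ii) and Theorem~\ref{Siu2}. Suppose $v:M\to M'$ is non-trivial on $H^{n'}(M',\mathbf Z)$; in particular its degree is non-zero, so $v^*:H^{n'}(M',\mathbf Z)\to H^{n'}(M,\mathbf Z)$ is non-zero. By the remark following Theorem~\ref{ExTh} together with Cor.~\ref{Euler} (the non-vanishing of $\chi(M')$ guarantees the Euler class of $v^*TM'$ is non-trivial, so the pull-back bundle has no non-trivial parallel section), the homotopy class of $v$ contains a Weyl harmonic map, which I will again call $u$. The whole point is now to control the rank of $u$.

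First I would invoke Theorem~\ref{AP}: since $M$ has complex dimension $2$ or admits a pluricanonical metric, $u$ is pluriharmonic and for every $x\in M$ the image $du(T^{1,0}_xM)$ is an Abelian subspace $\mathfrak a\subset\mathfrak p^c$. Because $M'$ is \emph{not} locally Hermitian symmetric, Abelian subspaces theorem~I gives the strict inequality $\dim_{\mathbf C}\mathfrak a<(1/2)\dim_{\mathbf C}\mathfrak p^c$ — equivalently, by the rank estimate~\eqref{RE}, $\rank(du)(x)<\dim M'$ for every $x\in M$. This is precisely the content of Cor.~\ref{rank1}, which I would simply cite. So the differential of $u$ is everywhere of rank strictly less than $n'=\dim M'$.

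Finally I would extract the topological contradiction from the rank deficiency. Since $\rank(du)<n'$ everywhere, the map $u$ is nowhere a submersion, hence every point of $M'$ is a critical value; by Sard's theorem the image $u(M)$ has measure zero in $M'$, so $u$ cannot be surjective. A non-surjective smooth map between closed oriented manifolds of the same dimension has degree zero, and more strongly factors through $M'\setminus\{q\}$ for some $q$, which is non-compact of dimension $n'$ and therefore has $H^{n'}(M'\setminus\{q\};\mathbf Z)=0$; consequently $u^*:H^{n'}(M',\mathbf Z)\to H^{n'}(M,\mathbf Z)$ is the zero map. Since $u$ is homotopic to $v$, also $v^*=0$ on $H^{n'}$, contradicting our assumption.

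The only genuinely delicate point is making sure the hypotheses of the existence theorem really apply — i.e.\ that $v$ being non-trivial on the top cohomology forces condition~(i) of Theorem~\ref{ExTh}, via non-vanishing of the Euler class of $v^*TM'$ — but this is exactly the discussion recorded after Theorem~\ref{ExTh} and packaged in Cor.~\ref{Euler}, so there is nothing new to prove. Everything else is a direct concatenation of Cor.~\ref{Euler}, Cor.~\ref{rank1}, and the elementary degree argument above; the two substantial inputs (the existence of the Weyl harmonic representative and the rank estimate coming from Abelian subspaces theorem~I) have already been established in the preceding sections.
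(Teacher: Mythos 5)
Your proposal is correct and follows the paper's own proof essentially step for step: contradiction, existence of a Weyl harmonic representative via Theorem~\ref{ExTh} (with condition~(i) verified through the Euler class discussion packaged in Cor.~\ref{Euler}), rank deficiency from Cor.~\ref{rank1}, and non-surjectivity forcing triviality on the top cohomology. The only cosmetic difference is that you make the last step explicit via Sard and factoring through $M'\setminus\{q\}$, whereas the paper deforms $u$ into a proper subskeleton of a cell subdivision of $M'$; these are interchangeable.
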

\begin{proof}
Suppose the contrary: there exists a map $v:M\to M'$ non-trivial on
the top cohomology. Then by Theorem~\ref{ExTh} $v$ is homotopic to a
Weyl harmonic map $u$. Now Cor.~\ref{rank1} implies that $u$ is not
surjective, hence by standard topology, can be deformed to a map
whose image lies in a proper subskeleton of some cell subdivision of
$M'$. In particular, the latter vanishes on the top cohomology -- a
contradiction.
\end{proof}
\begin{corollary}
\label{top2}
Let $(M,c,\nabla^W)$ be a K\"ahler-Weyl manifold which has complex
dimension $2$ or admits a pluricanonical metric. Let $(M',g')$
be a compact quotient of $H^m_{\mathbf K}$ and $u:M\to M'$ an
arbitrary map. Then:
\begin{itemize}
\item[(i)] if $\mathbf K=\mathbf R$, $u$ is trivial on homology in
  dimension greater than two.
\item[(iii)] if $\mathbf K=\mathbf H$, $u$ is trivial on homology in
  dimension greater than $2m$, one-half of the (real) dimension of $M'$.
\item[(iv)] if $\mathbf K=\mathbf O$, $u$ is trivial on homology in
  dimension greater than four. 
\end{itemize}
\end{corollary}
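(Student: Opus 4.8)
The plan is to reduce, via the existence theorem, to the case of a Weyl harmonic map, to feed that map into the rank estimates of Corollary~\ref{rank2}, and then to convert a pointwise bound on the rank into the vanishing of $u_*$ on homology by pulling back differential forms. Let me first note why the complex case is absent: for $\mathbf{K}=\mathbf{C}$ a Weyl harmonic map may be holomorphic, hence of maximal rank, so no homological restriction can be expected there, whereas in the remaining cases the relevant abelian subspaces of $\mathfrak{p}^c$ are too small (cf.\ Abelian subspaces theorem~II and the estimate~\eqref{RE}).

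First I would record that in each of the cases $(i)$, $(iii)$, $(iv)$ the target $M'$ is a compact quotient of a real, quaternionic, or octonionic hyperbolic space, and that each of these carries a metric of strictly negative sectional curvature; this is exactly the hypothesis required to invoke Theorem~\ref{ExTh}$(ii)$. Given an arbitrary smooth map $u\colon M\to M'$ I would split into two cases. If the homotopy class $[u]$ contains a map whose image is a closed geodesic (allowing the degenerate case of a point), then $u$ is homotopic to a map that factors through a $1$-dimensional complex, so $u_*$ annihilates $H_j(M;\mathbf{R})$ for every $j\ge 2$; since the dimensions occurring in $(i)$, $(iii)$, $(iv)$ are all at least two, the statement holds in this case. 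Otherwise Theorem~\ref{ExTh}$(ii)$ provides a Weyl harmonic map $\tilde u$ homotopic to $u$, and Corollary~\ref{rank2} gives a uniform bound $\rank(d\tilde u)\le k$ over all of $M$, with $k=2$ when $\mathbf{K}=\mathbf{R}$, $k=2m$ when $\mathbf{K}=\mathbf{H}$, and $k=4$ when $\mathbf{K}=\mathbf{O}$.

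The remaining step is elementary. If $\omega$ is a smooth closed $j$-form on $M'$ with $j>k$, then $\tilde u^{*}\omega$ vanishes identically on $M$: at every point its value is the evaluation of $\omega$ on $j$ vectors lying in the image of $d\tilde u$, a subspace of dimension at most $k<j$, and an alternating form vanishes on any linearly dependent system. Hence $\tilde u^{*}$ is the zero map on de Rham cohomology in all degrees $>k$, and passing to the dual vector spaces, using $H_j(\cdot\,;\mathbf{R})\cong H^{j}(\cdot\,;\mathbf{R})^{*}$, we conclude that $\tilde u_{*}$, and therefore $u_{*}$, vanishes on real homology in degrees $>k$, which is the assertion.

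I do not expect a genuine obstacle here, since the analytic input — existence and the rank bounds — is already in hand; the only point that needs a little attention is the bifurcation in the existence step, where one must peel off the maps onto closed geodesics excluded by Theorem~\ref{ExTh}$(ii)$, and these are harmless because they are homotopically $1$-dimensional. If one wishes the conclusion with integer rather than real coefficients, the form-theoretic argument should be supplemented by the observation, in the spirit of the proof of Corollary~\ref{top1}, that a map of rank at most $k$ can be deformed into a subcomplex of dimension $\le k$ of some triangulation of $M'$; with real coefficients the argument above is already complete.
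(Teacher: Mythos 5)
Your proposal is correct and follows essentially the same route as the paper: split off the homotopy classes containing a constant map or a map onto a closed geodesic (where the claim is trivial), otherwise invoke Theorem~\ref{ExTh} to produce a Weyl harmonic representative and apply the rank estimates of Corollary~\ref{rank2}. The paper leaves the final passage from the pointwise rank bound to triviality on homology implicit; your form-pullback argument (and the remark about deforming into a low-dimensional subcomplex for integral coefficients, as in Corollary~\ref{top1}) supplies exactly the missing detail.
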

\begin{proof}
If a given map $u$ is homotopic to a constant map or a map onto a
closed geodesic, than the statements of the corollary are
trivial. Otherwise Theorem~\ref{ExTh} implies that $u$ is homotopic to
a Weyl-harmonic map and the statements follow from the rank estimates
in Corollary~\ref{rank2}.
\end{proof}

Note that as special case the corollary above contains the following
statement: {\it any map of a compact complex surface to a compact
  manifold of constant negative curvature induces the trivial map on
  homology in dimension greater than two. In particular, a compact
  $4$-dimensional manifold of constant negative curvature does not
  admit a complex structure.} The last part of the statement is also a
consequence of the results of Wall and Kotschick (corollary
of~\cite[Th.~10.5]{Wall}, corrected by~\cite[Prop.~2]{Kot}) and
Carlson and Toledo~\cite[Cor~1.3]{CT97}. 

In comparison with K\"ahler geometry, little is known about
topological obstructions to the existence of locally conformal
K\"ahler metrics (in complex dimension greater than
two).\footnote{Here we assume that any K\"ahler metric is, of course,
  locally conformal K\"ahler.} The following statement gives a
construction of manifolds which are not homotopy equivalent to
K\"ahler-Weyl manifolds with pluricanonical metrics.
\begin{corollary}
Let $M$ and $M'$ be manifolds of the same dimension and suppose that
$M'$ is a locally symmetric space of non-compact type.
\begin{itemize}
\item[(i)] Suppose that $M'$ is an orientable irreducible space whose
  universal cover is
$$
SO_0(p,q)/SO(p)\times SO(q),\quad Sp(p,q)/Sp(p)\times
Sp(q),\text{ or }F_{-4(20)}/\mathit{Spin}(9),
$$
then the connected sum $M\sharp M'$ is not homotopy equivalent to a
K\"ahler-Weyl manifold with a pluricanonical metric.
\item[(ii)] Suppose that $M'$ is locally Hermitian symmetric and
  $\pi_1(M)\ne 0$, then the connected sum $M\sharp M'$ is not homotopy
  equivalent to a K\"ahler-Weyl manifold with a pluricanonical metric.
\end{itemize}
\end{corollary}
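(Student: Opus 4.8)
The plan is to feed the degree-one collapse map into the results of the preceding subsections, exactly as in the connected-sum remark after Corollary~\ref{3}. Suppose, towards a contradiction, that $M\sharp M'$ is homotopy equivalent to a K\"ahler-Weyl manifold $N$ carrying a pluricanonical metric. Collapsing the summand $M$ to a point yields a map $M\sharp M'\to M'$ of degree one, and composing it with a homotopy equivalence $N\to M\sharp M'$ produces a map $v:N\to M'$ of degree one; in particular $v$ is non-trivial on the top cohomology $H^{n'}(M',\mathbf Z)\to H^{n'}(N,\mathbf Z)$.

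Case~$(i)$ is immediate. Here $M'$ is, by hypothesis, orientable and irreducible, it is not locally Hermitian symmetric, and it has non-vanishing Euler characteristic --- for these properties of $SO_0(p,q)/SO(p)\times SO(q)$, $Sp(p,q)/Sp(p)\times Sp(q)$ and $F_{-4(20)}/\mathit{Spin}(9)$ see the discussion around Cor.~\ref{Euler}. Applying Cor.~\ref{top1} with $N$ as the domain, we find that $v$ must be trivial on the top cohomology, a contradiction.

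Case~$(ii)$ is where the real work lies. Now $M'$ is a compact quotient of a bounded symmetric domain, so Cor.~\ref{Euler} makes $v$ homotopic to a Weyl harmonic map $u:N\to M'$. As $u$ again has degree one it is surjective, so by Sard's theorem $du$ has rank $\dim M'$ at some point; Theorem~\ref{Siu1} --- using, as throughout the paper, that the universal cover of $M'$ has no hyperbolic-plane factor --- then shows that $u$ is holomorphic for some invariant complex structure on $M'$. A surjective holomorphic map of degree one between compact complex manifolds of equal dimension is generically injective, hence bimeromorphic; since $M'$ is smooth, $u$ is an isomorphism over the complement of an analytic subset of $M'$ of complex codimension $\ge 2$, and a short topological argument (removing such a subset alters neither $\pi_1$ nor $H^1$, and the exceptional locus of $u$ lies over it) shows that $u_*:\pi_1(N)\xrightarrow{\ \cong\ }\pi_1(M')$. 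On the other hand $\pi_1(N)\cong\pi_1(M\sharp M')\cong\pi_1(M)*\pi_1(M')$ by van Kampen (here $\dim M'\ge 4$ because its universal cover has no disk factor), and since $\pi_1(M)\neq 0$ while $\pi_1(M')$ is finitely generated, Grushko's theorem forbids this free product from being isomorphic to $\pi_1(M')$ --- the desired contradiction. Equivalently, one may run the proof of Theorem~\ref{Siu2} verbatim: bimeromorphy of $u$ supplies exactly the isomorphism $u^*:H^1(M')\xrightarrow{\ \cong\ }H^1(N)$ on which that argument relies, so $N$ is K\"ahler, and the fundamental group of a compact K\"ahler manifold is never a non-trivial free product.

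The routine part is the reduction via the collapse map together with the appeals to Corollaries~\ref{Euler} and \ref{top1} and to Theorem~\ref{Siu1}. The one substantive point --- and the reason Theorem~\ref{Siu2} cannot be quoted directly, since the collapse map is not a homotopy equivalence once $\pi_1(M)\neq 0$ --- is the step in case~$(ii)$ that upgrades the degree-one holomorphic map $u$ to a bimeromorphic morphism and reads off $\pi_1(N)\cong\pi_1(M')$; everything else is topological bookkeeping.
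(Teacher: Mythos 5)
Your proposal is correct and its skeleton --- collapse $M$ to a point to get a degree-one map from the putative K\"ahler--Weyl manifold to $M'$, then feed that map into the earlier machinery --- is exactly the paper's. Two points of comparison. In case~$(i)$ you invoke only Cor.~\ref{top1}; the paper cites Cor.~\ref{top1} \emph{and} Cor.~\ref{top2}, and this matters because the first family contains the real-hyperbolic quotients ($p=1$), which are not covered by the ``non-Hermitian with $\chi\ne 0$'' hypothesis of Cor.~\ref{top1} (the Euler characteristic vanishes in odd dimensions); those members are handled by the rank bound of Cor.~\ref{top2}$(i)$ instead, so you should add that citation. In case~$(ii)$ you take a genuinely different final step. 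The paper observes that the degree-one map, being a collapse composed with a homotopy equivalence, \emph{annihilates} the non-trivial subgroup $\pi_1(M)$ of $\pi_1(M)\ast\pi_1(M')$, and then quotes Jost--Yau's Lemma~9 (a holomorphic map of degree $\pm 1$ between compact complex manifolds is injective on $\pi_1$) to get an immediate contradiction --- injectivity suffices, no isomorphism and no Grushko needed. You instead re-derive the stronger statement that the degree-one holomorphic map is bimeromorphic and induces an isomorphism on $\pi_1$, and then exclude $\pi_1(M)\ast\pi_1(M')\cong\pi_1(M')$ by Grushko; this is sound (and is essentially a proof of the cited Jost--Yau lemma), just longer. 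You are also right to flag, as the paper silently does, that Theorem~\ref{Siu1} needs the universal cover of $M'$ to have no hyperbolic-plane factor, a hypothesis absent from the statement of part~$(ii)$ as printed.
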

\begin{proof}
Suppose the contrary and let $X$ be a K\"ahler-Weyl manifold with a
pluricanonical metric of the same homotopy type as the connected sum
$M\sharp M'$. Then there is a map of degree one from $X$ to $M'$,
obtained by applying the homotopy equivalence and then collapsing $M$
into a point. Now the statement of part~$(i)$ follows from
Cor.~\ref{top1} and~\ref{top2}. To prove part~$(ii)$, we use
Cor.~\ref{Euler} and Th.~\ref{Siu1} to conclude that the degree one
map is homotopic to a holomorphic map. Further, since it vanishes on
the subgroup $\pi_1(M)$, the contradiction follows from the following
observation of Jost and Yau~\cite[Lemm.~9]{JostYau}: a holomorphic map
of degree $\pm 1$ between compact complex manifolds is injective on
the fundamental group.
\end{proof}

\subsection{Co-compact lattices in $SO(1,n)$}
In this section we show that if a co-compact lattice in $SO(1,n)$ with
$n>2$ is the fundamental group of a compact K\"ahler-Weyl
manifold, then the latter can not be a complex surface and can not
admit a pluricanonical metric. This is an extension of the results by
Carlson and Toledo~\cite{CT89,CT97} which say that no such lattice can
be the fundamental group of a compact K\"ahler manifold or a compact
complex surface. The proof is based on the following factorisation
theorem, which is a sharpened version of the results due to Carlson
and Toledo~\cite{CT89} and Jost and Yau~\cite{JY2}.
\begin{FT}
Let $M$ be a complex manifold and $M'$ be a manifold of constant
negative curvature. Let $u:M\to M'$ be a pluriharmonic map such that
the rank of $du$ is at most two and equals two on an open and dense
subset of $M$. Then there exists a compact Riemannian surface $S$ and
a holomorphic map $h:M\to S$ and a harmonic map $\phi:S\to M'$ such
that $u=\phi\circ h$.
\end{FT}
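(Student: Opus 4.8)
The plan is to construct the surface $S$ as the leaf space of the kernel foliation of $du$ and to realise $h$ as the quotient map. Since $du$ has rank at most two everywhere and rank exactly two on an open dense set $M_0$, the distribution $\mathcal K=\Ker du$ has complex corank one on $M_0$; the first step is to show that, because $u$ is pluriharmonic and $M'$ has constant curvature, the subbundle $du(T^{1,0}M)\subset\mathfrak p^c$ (identifying tangent spaces as in Sect.~\ref{PLURI}) is not merely Abelian but in fact spanned, at each point of $M_0$, by a single complex vector. Indeed, an Abelian subspace of $\mathfrak p^c$ for the constant-curvature group $SO(1,n)$ has complex dimension $\leqslant 1$ by Abelian subspaces theorem~II(i) and the dimension count $\nu(H^m_{\mathbf R})=1$; combined with the rank-two hypothesis this forces $du(T^{1,0}_xM)$ to be a complex line $\ell_x$ for $x\in M_0$. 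So over $M_0$ the map $d'u$ has (complex) rank one, and $\Ker d'u$ is a holomorphic subbundle of $T^{1,0}M$ of corank one.

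The second step is to show this corank-one distribution is integrable and defines a holomorphic foliation. Integrability of $\Ker d'u$ will follow from pluriharmonicity: the equation $\tilde\nabla'' d'u=0$ says $d'u$ is a holomorphic section of $\Hom(T^{1,0}M,u^*T^cM')$ in the appropriate sense, and differentiating the relation $d'u(Z)=0$ for $Z$ a section of $\Ker d'u$ shows that $[Z,W]$ again lies in the kernel for $W$ another such section — this is exactly the Frobenius computation carried out by Carlson--Toledo and by Jost--Yau. Having an integrable holomorphic distribution of corank one on the open dense set $M_0$, I would next show it extends across the degeneracy locus $M\setminus M_0$. Here the key point is that $d'u$, being pluriharmonic, behaves like a holomorphic object, so its rank-one locus is a proper analytic subvariety and the line field $\ell_x$ extends meromorphically; one then argues (as in Siu's and Sampson's treatment of the singular set) that the foliation extends holomorphically to all of $M$. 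The leaf space is then a complex curve, which after normalisation and properness (using compactness of $M$ when $M$ is closed — if $M$ is only assumed complex one passes to the closure / takes the Stein factorisation of the resulting meromorphic map) is a compact Riemann surface $S$, and $h:M\to S$ is the holomorphic quotient map.

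The third step is to produce $\phi:S\to M'$ with $u=\phi\circ h$. By construction $du$ annihilates the tangent spaces to the fibres of $h$, so $u$ is constant on each connected fibre; since the generic fibre is connected one gets a well-defined map $\phi$ on a dense open subset of $S$, and continuity extends it to all of $S$. That $\phi$ is harmonic follows because $u=\phi\circ h$ with $h$ holomorphic (hence harmonic as a map to the Kähler surface $S$, after choosing a compatible metric) together with the composition formula for tension fields: $\tau(u)=d\phi(\tau(h))+\trace\bigl(\nabla d\phi(dh,dh)\bigr)$, and one checks the first term vanishes and, using that $h$ is a holomorphic submersion generically, that vanishing of the second forces $\phi$ harmonic on $S$. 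Regularity of $\phi$ at the finitely many critical values of $h$ is handled by elliptic removable-singularity arguments, exactly as in the references.

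The main obstacle I expect is the extension of the foliation across the degeneracy locus $M\setminus M_0$ and the attendant properness needed to guarantee $S$ is a \emph{compact} Riemann surface rather than an arbitrary (possibly non-Hausdorff) leaf space. Everything on $M_0$ is essentially the classical Sampson--Carlson--Toledo argument; the sharpening claimed here — getting a genuine compact surface and a factorisation valid on all of $M$, not just generically — requires controlling the behaviour of the rank-one line bundle $\ell\subset T^{1,0}M$ near its indeterminacy set. I would handle this by invoking the holomorphicity of $d'u$ to see that the classifying map of $\ell$ into a projectivised bundle extends meromorphically, then resolving indeterminacy and using that $M$ (or its compactification) is compact so that the Stein factorisation of the meromorphic fibration has compact curve base.
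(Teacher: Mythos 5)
Your Steps 1 and 2 track the paper's argument: the kernel of $d'u$ on the rank-two locus is shown to be an integrable holomorphic distribution via pluriharmonicity and the complex Frobenius theorem, and the line $[d'u]$ is extended meromorphically across the degeneracy set. (The paper makes two technical points you gloss over but would need: the degeneracy locus $\mathcal X$ must be shown to be a \emph{real-analytic} subvariety, which is arranged by choosing a real-analytic Hermitian metric so that $u$ is real-analytic; and the holomorphicity of $d'u$ is not automatic from pluriharmonicity alone --- one first needs the curvature vanishing $R'(X,Y)=0$ on $du(T^{1,0}M)$ to get $(d''_{\nabla'})^2=0$ and hence a Koszul--Malgrange holomorphic structure on $u^*T^cM'$, and then Mok's extension result for meromorphic sections of the projectivised bundle, which produces an indeterminacy set of codimension at least two contained in the zero set of $du$.)

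The genuine gap is in your construction of $S$. You correctly identify that the leaf space of the foliation may fail to be Hausdorff, but your proposed remedy --- ``taking the Stein factorisation of the resulting meromorphic map'' --- is circular: there is no map to a curve to factorise until $S$ has been built, and that is exactly what is at stake. The paper resolves this by an entirely different device: it considers the real-analytic set $V_0=\{(x,y):u(x)=u(y)\}$, uses the theorem of Diederich--Forn\ae ss to show that the irreducible components meeting the regular locus are complex-analytic, obtains from their union the graph of an open holomorphic equivalence relation, and then invokes Kaup's theorem to realise the quotient as a normal one-dimensional complex space, hence a smooth compact Riemann surface after Riemann extension of $h$. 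Without some substitute for this step your argument does not produce a compact surface. Separately, your derivation of the harmonicity of $\phi$ from the composition formula for tension fields assumes that the holomorphic map $h$ is harmonic and that $\tau(u)=0$ for the chosen metric; neither holds for a general (non-K\"ahler) Hermitian metric on $M$, where holomorphic maps are only Hermitian harmonic. The paper instead replaces $\phi$ by the Eells--Sampson harmonic representative $\phi'$, observes that $\phi'\circ h$ is Hermitian harmonic and homotopic to $u$, and concludes $\phi=\phi'$ from the uniqueness statement of Theorem~\ref{UniTh} together with the hypothesis that $\rank du$ exceeds one generically.
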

\begin{proof}
{\it Step~1: Introducing a holomorphic foliation.}
Let $U$ be a coordinate ball in $M$ such that the rank of $du$ equals
two on $U$. It is straightforward to see that the rank of $du(x)$
equals two if and only if the image of $T^{1,0}_xM$ under $du(x)$ is
complex one-dimensional and contains no real vectors. Denote by
$z^\alpha$ the holomorphic coordinates on $U$ and by $\partial_\alpha$
the corresponding coordinate vector fields. Without loss of
generality, we can suppose that the image of the first coordinate
vector $d'u(\partial_1)$, denoted by $X$, does not vanish. Then the
images $d'u(\partial_\alpha)$ are spanned by $X$, 
$$
d'u(\partial_\alpha)=q_\alpha X,
$$
and the functions $q_\alpha$ are holomorphic. To demonstrate the
latter we use the hypothesis that $u$ is pluriharmonic:
\begin{multline*}
0=(\widetilde\nabla du)_{\alpha\bar\beta}=\nabla'_{\bar\beta}
(du(\partial_\alpha))=\nabla'_{\bar\beta}(q_\alpha X)
=(\partial_{\bar\beta}q_\alpha)X+q_\alpha\nabla'_{\bar\beta}
(du(\partial_1))=(\partial_{\bar\beta}q_\alpha)X.
\end{multline*}
Above $\nabla'$  and $\widetilde\nabla$ denote the natural connections
on the bundles $u^*T^cM'$ and bundle $\Hom(T^cM,u^*T^cM')$
respectively; the latter is determined by the torsion-free complex
connection on $M$ and the connection $\nabla'$ on $M'$. The
distribution given by the vectors $(\partial_\alpha-q_\alpha\partial_1)$, 
where $\alpha>1$, is a holomorphic kernel of $du$ and is closed under
the Lie bracket. By the complex Frobenius theorem, we obtain a
holomorphic foliation $\mathcal F$ on the set of $M\backslash\mathcal
X$ where $du$ has rank two, and the map $u$ is constant on its leaves. 

Note that for a Hermitian metric on $M$, the map $u$ solves the
corresponding Hermitian harmonic map equation. Choosing a
real-analytic metric, it follows that $u$ is real-analytic and the set
$\mathcal X$, where the rank of $du$ does not equal two, is a
real-analytic subvariety of $M$.

\medskip
\noindent
{\it Step~2: Extending the holomorphic foliation $\mathcal F$.}
We outline the arguments of Mok~\cite[Prop.~(2.2.1)]{Mok85} showing
that $\mathcal F$ extends to a holomorphic foliation on
$M\backslash\mathcal Z$, where $\mathcal Z$ is a complex analytic
subvariety of complex codimension at least two.

By the discussion in Sect.~\ref{PLURI}, since $u$ is pluriharmonic, we
have
$$
R'(X,Y)=0\quad\text{for all}\quad X,Y\in du(T_x^{1,0}M),
\quad x\in M.
$$
This in turn is equivalent to $(d^{\prime\prime}_{\nabla'})^2=0$,
where
$$
d^{\prime\prime}_{\nabla'}:A^{0,k}(M,u^*T^cM')\longrightarrow
A^{0,k+1}(M,u^*T^cM')
$$
is the (0,1)-part of the differential on the $(u^*T^cM')$-valued
forms, determined by the connections on $M$ and $M'$. The latter is
the integrability condition that allows to define the Koszul-Malgrange
complex structure on $u^*T^cM'$, see~\cite{KM}: a local section $s$ of
$u^*T^cM'$ is holomorphic if and only if the
$d^{\prime\prime}_{\nabla'}s=0$. This complex structure turns
$\Hom(T^{1,0}M,u^*T^cM')$ into a holomorphic vector bundle, and since
$u$ is pluriharmonic, $d'u$ is a holomorphic section of it.

Denote by ${\mathbf P}\Hom$ the projectivisation of $\Hom(T^{1,0}M,
u^*T^cM')$ and by $[d'u]$ the image of $d'u$ under the natural
projection
$$
\Hom(T^{1,0}M,u^*T^cM')\backslash\{\text{zero-section}\}
\longrightarrow {\mathbf P}\Hom.
$$
Clearly, $[d'u]$ is a holomorphic section of ${\mathbf P}\Hom$ over
$M\backslash\mathcal X$. Since $\mathcal X$ is real-analytic, 
by~\cite[Prop.~(2.2.2)]{Mok85} this section extends meromorphically to
$M$. Denote the extension by $\lambda$; it is holomorphic everywhere
except for the set of indeterminacies, which is a complex analytic
subvariety $\mathcal Z$ of complex codimension at least two. It is
then straightforward to show that the distribution
$$
D_x=\{v\in T^{1,0}_xM: \eta(v)=0\text{ for all }\eta\in
\Hom(T^{1,0}_xM,T^c_{u(x)}M')\text{ such that }[\eta]=\lambda\}
$$
is integrable and defines a holomorphic foliation on
$M\backslash\mathcal Z$, which is an extension of $\mathcal F$.

Note that $\mathcal Z$ is contained in the zero-set of $d'u$ and,
hence, in the zero-set of $du$. Thus, the map $u$ is constant on its
connected components.

\medskip
\noindent
{\it Step~3: Factorisation via a holomorphic equivalence relation.}
Consider the set
$$
V_0=\{(x,y)\in M\times M: u(x)=u(y)\}.
$$
It is a real-analytic subvariety of $M\times M$ and let $V_0=\cup V^k$
be its decomposition into irreducible components. Clearly, the
diagonal of $M\times M$ is contained in some branch $V^i$. Since
$\mathcal F$ is holomorphic, $V^i$ is complex analytic at any smooth
point and, by the theorem of Diederich-Forn\ae ss~\cite{DF}, is a
complex analytic subvariety. Thus, the set of the complex analytic
components $V^k$'s such that
$$
V^k\cap(M\backslash\mathcal X)\times (M\backslash\mathcal X)
\ne\varnothing
$$
is non-empty. Denote by $V$ their union and define
$$
\mathfrak R=V\cap(M\backslash\mathcal Z)\times (M\backslash\mathcal
Z).
$$
As in~\cite[Prop.~(2.2)]{Mok88} one shows that $\mathfrak R$ is the
graph of an open holomorphic equivalence relation on
$M\backslash\mathcal Z$. The quotient space $(M\backslash\mathcal
Z)/\mathfrak R$ is endowed with a natural structure sheaf by assigning
to every open subset $U$  the set of holomorphic functions on
$h^{-1}(U)$; here
$$
h:M\backslash\mathcal Z\longrightarrow (M\backslash\mathcal
Z)/\mathfrak R
$$
denotes the natural projection. By the result of Kaup~\cite{Kaup}, the
ringed space $(M\backslash\mathcal Z)/\mathfrak R$ is isomorphic to a
normal complex space, and the projection $h$ is holomorphic. Since the
generic fiber of $h$ has complex codimension one, we conclude that
$(M\backslash\mathcal Z)/\mathfrak R$ is one-dimensional. Since the
latter is also normal, it is necessarily a smooth Riemann surface. By
the Riemann extension theorem, the map $h$ extends holomorphically to
$M$, and we denote by $S$ its image -- a compact Riemann surface. By
the definition of $\mathfrak R$, the map $u$ factors through $h$ on
$M\backslash\mathcal Z$. Since the former is constant on the connected
components of $\mathcal Z$, this factorisation extends to $M$.

From the above we see that there exists a continuous map $\phi:S\to
M'$ such that $u=\phi\circ h$; it remains to show that $\phi$ is
harmonic. First, by the theorem of Eells and Sampson~\cite{EeSa}, the
map $\phi$ is homotopic to a harmonic map $\phi'$. Since $S$ is a
Riemannain surface, the map $\phi'$ is also Hermitian harmonic and,
since the map $h$ is holomorphic, the composition $\phi'\circ h$ is a
Hermitian harmonic map. Since the latter is homotopic to $u$ and the
rank of $du$ is greater than one generically, Theorem~\ref{UniTh}
implies that $u$ is a unique Hermitian harmonic map in its homotopy
class, and the maps $\phi$ and $\phi'$ coincide.
\end{proof}

Now we state the principal application.
\begin{theorem}
\label{lattices}
Let $\Gamma$ be a co-compact discrete subgroup of $SO(1,n)$ with
$n>2$. If $\Gamma$ is the fundamental group of a compact K\"ahler-Weyl
manifold, then the latter can not be a complex surface and can not
admit a pluricanonical metric. 
\end{theorem}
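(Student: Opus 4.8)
The plan is to confront a hypothetical such manifold $M$ with the compact hyperbolic manifold $M'=\mathbb H^n/\Gamma$ by means of a Weyl-harmonic map, and then to combine the rank bound of Corollary~\ref{rank2}, the pluriharmonicity of Theorem~\ref{pluri}, and the Factorisation theorem to force a map of $M'$ through a Riemann surface --- which is impossible for cohomological-dimension reasons. First I would make the standard reduction to the torsion-free case: by Selberg's lemma $\Gamma$ contains a torsion-free finite-index subgroup $\Gamma_0$, and passing to the corresponding finite cover of $M$ --- which is again a compact complex surface, respectively again carries a pluricanonical metric (the pull-back of the given one, since covering maps are local isometries) and the pulled-back K\"ahler--Weyl structure --- we may assume $\Gamma$ itself is torsion-free. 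Then $M'=\mathbb H^n/\Gamma$ is a closed manifold of constant negative curvature with $\pi_1(M')=\Gamma$; it is aspherical, so the isomorphism $\pi_1(M)\cong\Gamma=\pi_1(M')$ is induced by a map $v\colon M\to M'$ whose induced map $v_*$ on fundamental groups is an isomorphism.

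Next I would produce a Weyl-harmonic representative. Since $\Gamma$ is a cocompact lattice in $SO(1,n)$ with $n\ge 3$ it is not infinite cyclic, so $v$ is not homotopic to a map onto a closed geodesic; as $M'$ has strictly negative curvature, hypothesis~(ii) of Theorem~\ref{ExTh} applies and $v$ is homotopic to a Weyl-harmonic map $u\colon M\to M'$, still inducing an isomorphism on $\pi_1$. Since $M$ is a complex surface or admits a pluricanonical metric and $M'=H^n_{\mathbf R}/\Gamma$, Corollary~\ref{rank2}(i) gives $\rank du\le 2$ everywhere; moreover constant negative curvature entails non-positive Hermitian sectional curvature (Cauchy--Schwarz), so Theorem~\ref{pluri} shows that $u$ is pluriharmonic.

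Now I would pin down the generic rank. Choosing a real-analytic Hermitian metric in $c$, the Weyl-harmonic map $u$ solves the Hermitian harmonic map equation with real-analytic data, hence is real-analytic, so its rank equals its maximal value $r$ on a dense open subset of $M$. If $r=0$ then $u$ is constant on an open set, hence constant by unique continuation, and $u_*$ is trivial --- impossible. If $r=1$ then by Proposition~\ref{rank<1} the map $u$ sends $M$ onto a closed geodesic, so $u_*$ factors through $\pi_1(S^1)=\mathbf Z$ --- impossible, $\Gamma$ not being cyclic. Hence $r=2$, so $\rank du\le 2$ everywhere and $\rank du=2$ on a dense open set, and the Factorisation theorem applies: it yields a compact Riemann surface $S$, a holomorphic map $h\colon M\to S$ and a harmonic map $\phi\colon S\to M'$ with $u=\phi\circ h$.

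Finally comes the endgame. On fundamental groups $u_*=\phi_*\circ h_*$; as $u_*$ is an isomorphism, $h_*$ is injective, so $\Gamma$ embeds as a subgroup of $\pi_1(S)$. But the fundamental group of a compact Riemann surface has cohomological dimension at most two, whence $\mathrm{cd}(\Gamma)\le\mathrm{cd}(\pi_1(S))\le 2$; on the other hand $M'=\mathbb H^n/\Gamma$ is a closed aspherical $n$-manifold with $n\ge 3$, so $\mathrm{cd}(\Gamma)=n\ge 3$ --- the required contradiction. I expect the delicate points to be organisational rather than technical: verifying that the homotopy class of $v$ meets the hypothesis of Theorem~\ref{ExTh} (ruling out closed geodesics via non-cyclicity of $\Gamma$), invoking real-analyticity to upgrade $\rank du\le 2$ to generic rank exactly two so that the Factorisation theorem is applicable, and checking that constant negative curvature supplies the non-positive Hermitian sectional curvature needed for pluriharmonicity; once these are in place the cohomological-dimension argument closes the proof immediately.
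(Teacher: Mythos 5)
Your proposal is correct and follows essentially the same route as the paper: reduce to the torsion-free case, take the classifying map to $\mathbb H^n/\Gamma$, produce a Weyl-harmonic representative via Theorem~\ref{ExTh}(ii) (using that $\Gamma$ is not cyclic), apply pluriharmonicity and the rank bound, pin the generic rank at two, invoke the Factorisation theorem, and conclude by comparing cohomological dimensions. Your slightly more explicit case analysis for the generic rank (via real-analyticity and Proposition~\ref{rank<1}) is a harmless elaboration of the paper's terser argument.
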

\begin{proof}
By passing to a subgroup of finite index, we may assume that $\Gamma$
is torsion-free. An Eilenberg-MacLane space $K(\Gamma,1)$ can be
constructed as the quotient $\Gamma\backslash D$, where
$D=SO(1,n)/SO(n)$ is the hyperbolic $n$-space. Let $M$ be a compact
K\"ahler-Weyl manifold whose fundamental group $\pi_1(M)$ is
isomorphic to $\Gamma$. The isomorphism is induced by a smooth map
$u:M\to\Gamma\backslash D$ which classifies the universal cover of
$M$. Since $\Gamma$ is co-compact and of cohomological dimension
greater than two, it can not be $\mathbf Z$ and the map $u$ is not
homotopic to a map onto a closed geodesic. Thus, by Theorem~\ref{ExTh}
we may assume that $u$ is a Weyl-harmonic map. Now suppose the
contrary to the statement of the theorem. Then Theorem~\ref{AP}
and Cor.~\ref{rank2} imply that $u$ is pluriharmonic and the rank
of $du$ is at most two. Since $u$ is not homotopic to a map onto a
closed geodesic, Proposition~\ref{rank<1} shows that the rank of $du$,
in fact, equals two on an open and dense subset of $M$. Now the
Factorisation theorem applies: the map $u$ factors as $\phi\circ h$,
where $h:M\to S$ and $S$ is a Riemannian surface. Since $u$ is
isomorphic on the fundamental groups, the homomorphism $h_*:\pi_1(M)
\to\pi_1(S)$ is injective and $\Gamma$ is identified with a subgroup
in $\pi_1(S)$. The latter acts freely on the universal cover of $S$,
which has to be contractible. Hence, the cohomological dimension of
$\Gamma$ is at most two. However, since $\Gamma\backslash D$ is a
$K(\Gamma,1)$-space, the cohomological dimension of $\Gamma$ is in
fact $n$. Since $n>2$, we are in the presence of a contradiction.
\end{proof}

\section{Proofs of Theorems~\ref{ExTh} and~\ref{UniTh}}
\label{ETproof}
The purpose of this section is to prove Theorem~\ref{ExTh}. Throughout
we suppose that $M'$ has non-positive sectional curvature. We start
with introducing some notation.

\subsection{Preliminaries}
Let $\tilde M$ and $\tilde M'$ be universal covers of manifolds $M$
and $M'$ respectively. Then the fundamental groups $\pi_1(M,\cdot)$
and $\pi_1(M',\cdot)$ act by isometries on them such that 
$$
M=\tilde M/\pi_1(M,\cdot)\quad\text{and}\quad M'=\tilde
M'/\pi_1(M',\cdot).
$$
The distance function $\tilde r:\tilde M'\times\tilde M'\to \mathbf R$
is well-defined and smooth outside the diagonal; by $r$ we denote the
induced function on the quotient $(\tilde M'\times\tilde
M')/\pi_1(M',\cdot)$.

Now let $u$ and $v$ be homotopic maps from $M$ to $M'$ and $H$ be a
homotopy between them. Its lifting to the universal covers defines the
liftings $\tilde u$ and $\tilde v$ of these maps. Further, the map
$\tilde w:\tilde M\to\tilde M'\times\tilde M'$, given by $x\mapsto
(\tilde u(x),\tilde v(x))$, is equivariant with respect to the action
of the fundamental groups and, hence, descends to a map
$$
w:M\longrightarrow (\tilde M'\times\tilde M')/\pi_1(M',\cdot).
$$
Finally, we define the function $\rho_H(u,v)$ as the composition
$r\circ w$. The function $\rho^2_H(u,v)$ is smooth on $M$, and we give
an inequality for its Weyl laplacian below (Lemma~\ref{pseudoLaIn}). 
First, we introduce some more notation.

For a point $(u,v)\in\tilde M'\times\tilde M'$ choose an orthonormal
basis $e_1,\ldots,e_{n'}$ for $T_u\tilde M'$. By parallel transport
along the shortest geodesics from $u$ to $v$ we also obtain a basis
$\bar e_1,\ldots,\bar e_{n'}$ for $T_v\tilde M'$ and consider the
frame $e_1,\ldots,e_{n'},\bar e_1,\ldots,\bar e_{n'}$ as a basis for
$T_u\tilde M'\times T_u\tilde M'$. Let $\omega^1,\ldots,\omega^n$ be
an orthonormal basis for $T_x^*M$. Then for maps $u$ and $v$ their
differentials $du(x)$ and $dv(x)$ at a point $x\in M$ can be
decomposed as $u^i_\alpha e_i\otimes\omega^\alpha$ and
$v^{\bar\imath}_\alpha e_{\bar\imath}\otimes\omega^\alpha$
respectively. Analogously, the second fundamental forms have the
coefficients $u^i_{\alpha\beta}$ and
$v^{\bar\imath}_{\alpha\beta}$. In particular, the tensions fields
satisfy the relations
$$
\tau^i(u)=\sum_\alpha u^i_{\alpha\alpha}\quad\text{and}\quad
\tau^{\bar\imath}(v)=\sum_\alpha v^{\bar\imath}_{\alpha\alpha}.
$$
The following statement is a pseudo-version of the calculation due to
Schoen and Yau~\cite{ShYau}. The symbol $\Delta^W$ denotes the
Weyl laplacian, given by $\mathit{trace}_g\nabla^W d$.
\begin{lemma}
\label{pseudoLaIn}
Suppose $M'$ has non-positive sectional curvature and $M$ is endowed
with a Weyl connection $\nabla^W$. Then for any homotopic maps
$u,v:M\to M'$ and a given homotopy $H$ between them the Weyl laplacian
of the function $\rho^2=\rho^2_H(u,v)$ satisfies the following
inequality
\begin{equation}
\label{SYineq}
\Delta^W\rho^2\geqslant 2\sum_{i,\alpha}(u^i_\alpha-
v^{\bar\imath}_\alpha)^2 -2\rho(\abs{\tau^W(u)}+
\abs{\tau^W(v)}).
\end{equation}
\end{lemma}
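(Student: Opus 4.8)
The plan is to reduce the inequality to the classical Schoen--Yau computation by exploiting the formula~\eqref{Weyl&HME} relating $\tau^W$ to the ordinary tension field $\tau$. First I would recall the Schoen--Yau inequality for harmonic (and more generally arbitrary smooth) maps: for the function $\rho^2=\rho^2_H(u,v)$ built from the distance function on $(\tilde M'\times\tilde M')/\pi_1(M',\cdot)$, one has pointwise
\begin{equation*}
\Delta_g\rho^2\geqslant 2\sum_{i,\alpha}(u^i_\alpha-v^{\bar\imath}_\alpha)^2-2\rho\bigl(\abs{\tau(u)}+\abs{\tau(v)}\bigr),
\end{equation*}
where $\Delta_g$ is the ordinary Laplace--Beltrami operator of a fixed Gauduchon (or any) metric $g\in c$. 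This is exactly the calculation in~\cite{ShYau}; it is a consequence of the non-positivity of the sectional curvature of $M'$, the fact that $\tilde r^2$ is smooth and convex along geodesics, and the second variation of arc length. I would either quote it or sketch it: differentiate $\rho^2=r\circ w$ twice using the frame $e_1,\dots,e_{n'},\bar e_1,\dots,\bar e_{n'}$, the Hessian of $\tilde r^2$ on $\tilde M'\times\tilde M'$ controls the terms with $\nabla du$ and $\nabla dv$ via the tension fields, and the curvature terms contribute with a favourable sign.

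Next I would pass from $\Delta_g$ to $\Delta^W$. By the definition $\Delta^W f=\mathit{trace}_g\nabla^W\!df$ and formula~\eqref{WeylDef} for $\nabla^W$, a direct computation gives
\begin{equation*}
\Delta^W f=\Delta_g f-\frac{n-2}{2}\,df(\Theta^\sharp)
\end{equation*}
for any smooth function $f$ (this is the scalar analogue of~\eqref{Weyl&HME}, and indeed follows from it applied to the identity map or just from contracting~\eqref{WeylDef}). Applying this to $f=\rho^2$ converts the left-hand side of the Schoen--Yau inequality into $\Delta^W\rho^2+\frac{n-2}{2}d(\rho^2)(\Theta^\sharp)$. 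So it remains to absorb the extra first-order term $\frac{n-2}{2}d(\rho^2)(\Theta^\sharp)$ into the right-hand side, and simultaneously to replace $\tau(u),\tau(v)$ by $\tau^W(u),\tau^W(v)$.

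The key observation is that both corrections have the \emph{same} origin and cancel. Using~\eqref{Weyl&HME}, $\tau(u)=\tau^W(u)+\frac{n-2}{2}du(\Theta^\sharp)$, so
\begin{equation*}
\abs{\tau(u)}\leqslant\abs{\tau^W(u)}+\frac{n-2}{2}\abs{du(\Theta^\sharp)},
\end{equation*}
and similarly for $v$. Meanwhile $d(\rho^2)(\Theta^\sharp)$ can be expressed, via the chain rule $\rho^2=r\circ w$ and the gradient of $\tilde r^2$ at $(\tilde u(x),\tilde v(x))$, in terms of the components $u^i_\alpha,v^{\bar\imath}_\alpha$ contracted against $\Theta^\sharp$; concretely $\nabla(\rho^2)=2\rho\,\nabla\rho$ and $\nabla\rho$ is (up to sign) the pull-back under $du$ resp.\ $dv$ of the unit vectors along the minimizing geodesic, so $\abs{d(\rho^2)(\Theta^\sharp)}\leqslant 2\rho\bigl(\abs{du(\Theta^\sharp)}+\abs{dv(\Theta^\sharp)}\bigr)$ by Cauchy--Schwarz. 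Comparing coefficients, the term $\frac{n-2}{2}d(\rho^2)(\Theta^\sharp)$ moved to the right-hand side is dominated in absolute value by $\frac{n-2}{2}\cdot 2\rho(\abs{du(\Theta^\sharp)}+\abs{dv(\Theta^\sharp)})$, which is precisely $2\rho$ times the discrepancy between $\abs{\tau(u)}+\abs{\tau(v)}$ and $\abs{\tau^W(u)}+\abs{\tau^W(v)}$. Substituting and rearranging yields exactly~\eqref{SYineq}.

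The main obstacle I anticipate is the careful bookkeeping of signs and of the geometry of $\nabla\rho$ — one must be sure that the first-order Weyl term and the first-order term distinguishing $\tau$ from $\tau^W$ really do have matched magnitudes (not merely comparable up to a constant), since the inequality is sharp in its structure. This requires writing $\nabla\rho$ explicitly in the adapted frame and checking that the same contraction $\langle du(\cdot),\Theta^\sharp\rangle$ appears on both sides; once that is verified, the rest is the routine Schoen--Yau second-variation estimate, which carries over verbatim because it only uses $\mathrm{sec}_{M'}\leqslant 0$ and the smoothness of $\rho^2$, neither of which involves the connection on the domain.
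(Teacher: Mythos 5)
Your overall strategy coincides with the paper's: both reduce to the Schoen--Yau computation and account for the Weyl connection through the identity $\Delta^W f=\Delta_g f-\frac{n-2}{2}\,df(\Theta^\sharp)$ together with relation~\eqref{Weyl&HME}. However, the final step of your argument has a genuine gap: the two first-order corrections do \emph{not} cancel once you have passed through the triangle inequality. To obtain~\eqref{SYineq} from your starting point you would need
\begin{equation*}
-\tfrac{n-2}{2}\,d(\rho^2)(\Theta^\sharp)-2\rho\bigl(\abs{\tau(u)}+\abs{\tau(v)}\bigr)\geqslant-2\rho\bigl(\abs{\tau^W(u)}+\abs{\tau^W(v)}\bigr),
\end{equation*}
but your two estimates, $\abs{\tau(u)}\leqslant\abs{\tau^W(u)}+\frac{n-2}{2}\abs{du(\Theta^\sharp)}$ and $\abs{d(\rho^2)(\Theta^\sharp)}\leqslant 2\rho(\abs{du(\Theta^\sharp)}+\abs{dv(\Theta^\sharp)})$, both push the right-hand side \emph{down}: together they only yield $\Delta^W\rho^2\geqslant 2\sum(u^i_\alpha-v^{\bar\imath}_\alpha)^2-2\rho(\abs{\tau^W(u)}+\abs{\tau^W(v)})-2(n-2)\rho(\abs{du(\Theta^\sharp)}+\abs{dv(\Theta^\sharp)})$, which is strictly weaker than~\eqref{SYineq}. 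The two error terms have the same sign and add up instead of cancelling. A concrete failure: if $u$ is Weyl harmonic then $\tau^W(u)=0$ while $\abs{\tau(u)}=\frac{n-2}{2}\abs{du(\Theta^\sharp)}$ may be large, and nothing in your chain of inequalities compensates for the resulting loss.

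The cancellation you are after is exact, but only \emph{before} any absolute values are taken. The Schoen--Yau computation gives the identity $\Delta_g\rho^2=\sum_\alpha(r^2)_{X_\alpha X_\alpha}+2\rho(r_i\tau(u)^i+r_{\bar\imath}\tau(v)^{\bar\imath})$, with the signed contraction against $\nabla r$, not merely the lower bound $-2\rho(\abs{\tau(u)}+\abs{\tau(v)})$. Since $d(\rho^2)(\Theta^\sharp)=2\rho(r_iu^i_\alpha+r_{\bar\imath}v^{\bar\imath}_\alpha)\Theta^\alpha$ involves the \emph{same} linear functional $r_i$, subtracting $\frac{n-2}{2}d(\rho^2)(\Theta^\sharp)$ combines with the tension terms to give exactly $2\rho(r_i\tau^W(u)^i+r_{\bar\imath}\tau^W(v)^{\bar\imath})$ by~\eqref{Weyl&HME}; this is precisely the paper's identity~\eqref{PseudoDist}. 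Only at that stage may one use $\nabla r=\bar e_1-e_1$ to bound the term from below by $-2\rho(\abs{\tau^W(u)}+\abs{\tau^W(v)})$. The fix is therefore to reorder your steps: carry the signed tension-field terms through the Weyl correction first, and apply the norm bound last.
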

\begin{proof}
First, we obviously have
$$
(\rho^2)_\alpha=2\rho\rho_\alpha=2\rho(r_i u^i_\alpha+r_{\bar\imath}
v^{\bar\imath}_\alpha).
$$
It is a straightforward calculation, cf.~\cite[p.368]{ShYau}, to show the
following relations
\begin{multline}
\label{PseudoDist}
\Delta^W\rho^2=\sum_\alpha(\rho^2)_{\alpha\alpha}-((n-2)/2)(\rho^2)_\alpha
\Theta^\alpha=\sum_\alpha (r^2)_{X_\alpha X_\alpha}+2\rho
\sum_\alpha(r_i u^i_{\alpha\alpha}+r_{\bar\imath} 
v^{\bar\imath}_{\alpha\alpha})\\-(n-2)\rho(r_i u^i_{\alpha}+r_{\bar\imath} 
v^{\bar\imath}_{\alpha})\Theta^\alpha=\sum_\alpha (r^2)_{X_\alpha
  X_\alpha}+2\rho(r_i\tau^W(u)^i+r_{\bar\imath}
\tau^W(v)^{\bar\imath}),
\end{multline}
where the vectors $X_\alpha=u^i_\alpha e_i+v^{\bar\imath}_\alpha
e_{\bar\imath}$ and the summation convention for the repeated indices
is used troughout. As is shown in~\cite[p.365]{ShYau},
$$
\sum_{\alpha}(r^2)_{X_\alpha X_\alpha}\geqslant 2\sum_{i,\alpha}
(u^i_\alpha-v^{\bar\imath}_\alpha)^2,
$$
if $M'$ has non-positive sectional curvature. Finally, choose the
orthonormal frame $\{e_i\}$ such that $e_1$ is tangent to the shortest
geodesic joining $u$ and $v$. Then the first variation formula implies
that $\nabla r=\bar e_1-e_1$. This yields the following relation
$$
2\rho(r_i\tau^W(u)^i+r_{\bar\imath}\tau^W(u)^{\bar\imath})
\geqslant-2\rho(\abs{\tau^W(u)}+\abs{\tau^W(u)}).
$$
Combining the last two inequalities with the formula for
$\Delta^W\rho^2$ we demonstrate the lemma.
\end{proof}

The next assertion follows directly from the inequality above by
integration by parts in the left-hand side.
\begin{corollary}
\label{energy&dist}
Under the conditions of Lemma~\ref{pseudoLaIn}, we have the following
inequality
$$
E(u)\leqslant 2E(v)+C\int_M\rho^2d\mathit{Vol}_g+\int_M\rho
(\abs{\tau^W(u)}+\abs{\tau^W(v)})d\mathit{Vol}_g,
$$
where the positive constant $C$ depends only on the metric $g$ and the
Higgs field $\Theta^\sharp$ on $M$ and their derivatives.
\end{corollary}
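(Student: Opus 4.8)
The plan is to integrate the pointwise inequality~\eqref{SYineq} over the closed manifold $M$ and to compute the integral of its left-hand side explicitly. First I would record that the Weyl laplacian of a function differs from the Laplace--Beltrami operator by a first order term: applying the relation $\tau^W(f)-\tau(f)=-((n-2)/2)\,df(\Theta^\sharp)$ to a real-valued function $f$ gives
$$
\Delta^Wf=\Delta_Mf-\frac{n-2}{2}\langle df,\Theta\rangle,
$$
where $\Theta$ is the Higgs field of $\nabla^W$ with respect to $g$. Since $M$ is closed, $\int_M\Delta_Mf\,d\mathit{Vol}_g=0$, and integrating by parts in the remaining term yields $\int_M\Delta^Wf\,d\mathit{Vol}_g=-\frac{n-2}{2}\int_Mf\,d^*\Theta\,d\mathit{Vol}_g$. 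Taking $f=\rho^2$ and bounding $\abs{d^*\Theta}$ by a constant depending only on $g$, $\Theta^\sharp$ and their derivatives, one obtains $\int_M\Delta^W\rho^2\,d\mathit{Vol}_g\leqslant C\int_M\rho^2\,d\mathit{Vol}_g$.

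Next I would integrate~\eqref{SYineq} over $M$ and combine the result with the bound just obtained, which gives
$$
2\int_M\sum_{i,\alpha}(u^i_\alpha-v^{\bar\imath}_\alpha)^2\,d\mathit{Vol}_g\leqslant C\int_M\rho^2\,d\mathit{Vol}_g+2\int_M\rho(\abs{\tau^W(u)}+\abs{\tau^W(v)})\,d\mathit{Vol}_g.
$$
It then remains to bound $E(u)$ from below by the integral on the left. Using the elementary inequality $(a-b)^2\geqslant(1/2)a^2-b^2$ termwise, one has $\sum_{i,\alpha}(u^i_\alpha-v^{\bar\imath}_\alpha)^2\geqslant(1/2)\norm{du}^2-\norm{dv}^2$ at every point, hence $\int_M\sum_{i,\alpha}(u^i_\alpha-v^{\bar\imath}_\alpha)^2\,d\mathit{Vol}_g\geqslant E(u)-2E(v)$ after integration over the closed manifold $M$. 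Substituting this into the previous display, dividing by $2$, and renaming the constant yields the assertion.

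There is no genuine obstacle here -- the corollary really is an immediate consequence of Lemma~\ref{pseudoLaIn}. The only points requiring (routine) care are choosing the sign and coefficient of the first order term of $\Delta^W$ consistently with the definition of $\tau^W$, and picking the quadratic inequality so that precisely the coefficients $1$ and $2$ appear in front of $E(u)$ and $E(v)$ in the final bound. That $C$ depends only on $g$, $\Theta^\sharp$ and their derivatives is clear, since it is controlled by the supremum of $\abs{d^*\Theta}$ on the compact domain.
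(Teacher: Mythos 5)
Your argument is correct and is essentially the paper's own proof: the paper disposes of this corollary with the single remark that it "follows directly from the inequality above by integration by parts in the left-hand side," and your computation (identifying the first-order part of $\Delta^W$, integrating it by parts against $d^*\Theta$, and then using $(a-b)^2\geqslant\tfrac12 a^2-b^2$ to extract $E(u)-2E(v)$) supplies exactly the routine details that remark suppresses. All steps check out, including the provenance of the constant $C$.
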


\subsection{Key estimates}
We prove the existence of Weyl harmonic maps via considering the
corresponding heat flow
\begin{equation}
\label{PseudoHF}
\frac{\partial}{\partial t}u(t,x)=\tau^W(u)(t,x),
\qquad u(0,x)=u^0(x),\qquad x\in M, ~t\in[0,+\infty);
\end{equation}
where $u^0$ is a continuous map in a given homotopy class. We show
that a solution of this parabolic equation exists for all
$t\in[0,+\infty)$ -- we assume throughout that the sectional curvature
  of $M'$ is non-positive. Moreover, if $u^0$ belongs to a homotopy
  class which satisfies suppositions of Theorem~\ref{ExTh}, then there
  exists a sequence $t_n\to +\infty$ such that $u(t_n,\cdot)$
  converges smoothly to a Weyl harmonic map.

Linearising this parabolic equation and using results on linear
parabolic systems and the implicit function theorem, it follows in a
standard manner that equation~\eqref{PseudoHF} has a solution for
small $t$ and, by the semi-group property, the interval of existence
in $[0,+\infty)$ is open. To show that it is closed, and hence the
  solution exists for all $t$, we first state the following claims.
\begin{claim}
\label{c1}
Let $u(t,x)$ be a solution of equation~\eqref{PseudoHF} on the
interval $[0,T)$. Then the quantity $\sup_{x\in M}\norm{(\partial
    u/\partial t)}^2$ is non-increasing in $t\in [0,T)$.
\end{claim}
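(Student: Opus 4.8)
The plan is to establish the parabolic differential inequality
$$
\Bigl(\frac{\partial}{\partial t}-\Delta^W\Bigr)\norm{\partial u/\partial t}^2\leqslant 0\qquad\text{on }M\times[0,T),
$$
and then to invoke the maximum principle for the operator $\partial/\partial t-\Delta^W$ on the closed manifold $M$. Recall that the Weyl laplacian $\Delta^W=\mathit{trace}_g\nabla^Wd$ differs from the Laplace--Beltrami operator of $g\in c$ only by the first order term $-((n-2)/2)\langle\Theta^\sharp,\nabla\,\cdot\,\rangle$, cf. formula~\eqref{PseudoDist}; in particular it carries no zeroth order term, which is what makes the maximum principle applicable. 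Note also that the solution of~\eqref{PseudoHF} is smooth on $M\times(0,T)$ by parabolic regularity, so the computations below are legitimate.

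First I would set $V=\partial u/\partial t$ and regard it as a section of the pull-back bundle $u^*TM'$ along the flow. Differentiating equation~\eqref{PseudoHF} in $t$, using $\tau^W(u)=\tau(u)-((n-2)/2)du(\Theta^\sharp)$ together with~\eqref{Weyl&HME}, and commuting the induced covariant derivative $\nabla_t$ with the spatial covariant derivatives -- this is where the curvature tensor $R'$ of $\nabla'$ enters, and where one uses that the relevant brackets vanish since $\Theta^\sharp$ and a fixed frame on $M$ are $t$-independent -- one obtains the evolution equation
$$
\nabla_t V=\Delta^W V-\sum_\alpha R'(V,du\cdot e_\alpha)du\cdot e_\alpha,
$$
where $\{e_\alpha\}$ is a local $g$-orthonormal frame on $M$ and $\Delta^W$ now denotes the associated Weyl rough laplacian $\mathit{trace}_g\nabla^Wd$ acting on sections of $u^*TM'$ with respect to $\widetilde\nabla$. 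The only point needing care here is to verify that the first order correction terms produced by formula~\eqref{WeylDef} assemble exactly into this $\Delta^W$ on sections; this is the bundle-valued analogue of the functional identity $\Delta^Wf=\Delta f-((n-2)/2)\,df(\Theta^\sharp)$ and is a direct computation from~\eqref{WeylDef}.

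Next I would derive the Bochner identity for $e_t:=\norm{V}^2$. Since $\Delta^W$ differs from the rough laplacian of $\widetilde\nabla$ by a first order term, the standard identity $\Delta^W\norm{V}^2=2\langle\Delta^W V,V\rangle+2\norm{\nabla V}^2$ holds, and combining it with the evolution equation above yields
$$
\Bigl(\frac{\partial}{\partial t}-\Delta^W\Bigr)\norm{V}^2=-2\norm{\nabla V}^2-2\sum_\alpha\langle R'(V,du\cdot e_\alpha)du\cdot e_\alpha,V\rangle.
$$
Each summand in the curvature term is a sectional curvature quantity of the plane spanned by $V$ and $du\cdot e_\alpha$, hence is non-negative by the hypothesis that $M'$ has non-positive sectional curvature, exactly as in the classical computation for the harmonic map heat flow; therefore the right-hand side is $\leqslant 0$ and $e_t$ is a subsolution of $\partial/\partial t-\Delta^W$.

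Finally I would apply the parabolic maximum principle. Since $\Delta^W$ has no zeroth order term, at a point $x_0$ realising $\max_{x\in M}e_t(x)$ one has $\nabla e_t(x_0)=0$ and $\Delta e_t(x_0)\leqslant 0$, hence $\Delta^W e_t(x_0)\leqslant 0$; the subsolution property then forces $(\partial/\partial t)e_t(x_0)\leqslant 0$. A standard argument (comparing with $e_t+\varepsilon t$ and letting $\varepsilon\to 0$, or Hamilton's maximum principle on a closed manifold) upgrades this pointwise statement to the monotonicity of $t\mapsto\sup_{x\in M}e_t(x)$ on $[0,T)$, which is the claim. The main obstacle throughout is simply the careful bookkeeping of the three connections involved -- the Weyl connection $\nabla^W$ on $TM$, the Levi-Civita connection $\nabla'$ on $TM'$, and the derivative $\nabla_t$ along the flow -- when commuting covariant derivatives; the analytic ingredients are entirely standard.
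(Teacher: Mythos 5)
Your argument is correct and follows essentially the same route as the paper: the paper proves the identity $\tfrac12(\Delta^W-\partial/\partial t)\norm{\partial u/\partial t}^2=\norm{\nabla(\partial u/\partial t)}^2-\sum_\alpha\langle R'(du\cdot e_\alpha,\partial u/\partial t)du\cdot e_\alpha,\partial u/\partial t\rangle$ as the second Bochner identity of Lemma~\ref{Bpar} (obtained there by substituting $\tau(u)=(\partial u/\partial t)+c_n\,du(\Theta^\sharp)$ into the standard Bochner formula for the flow, which is what your direct commutation computation amounts to), notes the curvature term has the right sign by non-positive sectional curvature, and concludes by the parabolic maximum principle exactly as you do. The only cosmetic point is your phrase that each curvature summand ``is non-negative'': what matters, and what your displayed inequality correctly records, is that its contribution to $(\partial/\partial t-\Delta^W)\norm{V}^2$ is non-positive.
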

\begin{proof}
Since the sectional curvature of $M'$ is non-positive, the second
Bochner identity in Lemma~\ref{Bpar} implies that
$$
\frac{1}{2}\left(\Delta^W-\frac{\partial}{\partial t}\right)
\norm{(\partial u/\partial t)}^2\geqslant 0.
$$
Now the claim follows from the parabolic maximum principle.
\end{proof}
\begin{claim}
\label{c2}
Let $u(t,x)$ be a solution of equation~\eqref{PseudoHF} on the
interval $[0,T)$. Then it satisfies the following point-wise bound
$$
\norm{du(t,x)}^2\leqslant C_1\sup_{\tau\leqslant t}
\int_M\norm{du(\tau,x)}^2d\mathit{Vol}_g(x),
$$
for $\delta\leqslant t<T$, where $\delta>0$ is sufficiently
small; the positive constant $C_1$ depends on the geometry of $M$, the
Higgs form $\Theta$, and $\delta$.
\end{claim}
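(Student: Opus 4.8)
The plan is to show that, along the flow~\eqref{PseudoHF}, the function $f(t,x)=\norm{du(t,x)}^2$ is a non‑negative subsolution of a uniformly parabolic equation, and then to quote the parabolic mean value (Moser) inequality.

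\emph{A differential inequality for $\norm{du}^2$.} First I would apply the heat‑flow version of the Bochner formula of Lemma~\ref{BLemma} (Lemma~\ref{Bpar}, the companion of the identity used for $\norm{\partial u/\partial t}^2$ in the proof of Claim~\ref{c1}): for a solution of~\eqref{PseudoHF} one has
\begin{multline*}
\frac12\Bigl(\Delta^W-\frac{\partial}{\partial t}\Bigr)\norm{du}^2
=\norm{\nabla du}^2-\sum_{\alpha,\beta}\langle R'(du\cdot e_\alpha,du\cdot e_\beta)du\cdot e_\alpha,du\cdot e_\beta\rangle_{g'}\\
+\sum_{\alpha}\Bigl[\mathit{Ricci}^W(X_\alpha,X_\alpha)+\frac{n-2}{4}\bigl(\abs{\Theta}^2\abs{X_\alpha}^2-\Theta(X_\alpha)^2\bigr)\Bigr].
\end{multline*}
The curvature hypothesis on $M'$ makes the second term on the right non‑negative, the first is obviously non‑negative, and --- since $M$ is compact, $g$ and $\Theta$ are fixed, and $\abs{X_\alpha}\leqslant C\norm{du}$ --- the bracketed terms are bounded below by $-C_0\norm{du}^2$ with $C_0$ depending only on the geometry of $M$ and on $\Theta$. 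Hence $\bigl(\frac{\partial}{\partial t}-\Delta^W\bigr)f\leqslant 2C_0 f$ on $M\times(0,T)$. Since $\Delta^W$ differs from the Laplace--Beltrami operator of $g$ only by a first order term with fixed smooth coefficients (cf.~\eqref{Weyl&HME}), $f\geqslant 0$ is a subsolution of a uniformly parabolic operator with smooth bounded coefficients.

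\emph{Mean value inequality and conclusion.} Fix $\delta>0$ smaller than the injectivity radius of $(M,g)$. For $\delta\leqslant t<T$ and $x_0\in M$ the parabolic cylinder $Q=B_\delta(x_0)\times(t-\delta,t]$ is contained in $M\times[0,T)$ precisely because $t\geqslant\delta$. The classical local boundedness estimate for non‑negative subsolutions of uniformly parabolic equations with bounded coefficients (Moser iteration; the parabolic mean value inequality), applied on $Q$, gives a bound of the form
$$
f(t,x_0)\leqslant C\,\delta^{-(n+2)/2}\int_{t-\delta}^{t}\int_{B_\delta(x_0)}f(\tau,x)\,d\mathit{Vol}_g(x)\,d\tau,
$$
with $C$ depending only on $n$, on a lower Ricci bound and the injectivity radius of $(M,g)$, and on the coefficient bounds, i.e.\ only on the geometry of $M$, the form $\Theta$, and $\delta$. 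Estimating $\int_{B_\delta(x_0)}f(\tau,\cdot)\leqslant\int_M f(\tau,\cdot)$ and then the remaining $\tau$‑integral by $\delta\cdot\sup_{\tau\leqslant t}\int_M f(\tau,x)\,d\mathit{Vol}_g(x)$, and finally taking the supremum over $x_0\in M$ (the constant being uniform by compactness of $M$), yields exactly the asserted bound, with $C_1=C\,\delta^{-n/2}$.

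\emph{The main obstacle.} Analytically there is none of substance: the parabolic regularity used in the second step is standard on compact manifolds, and the Bochner identity needed in the first is the heat‑flow version of Lemma~\ref{BLemma}. The only point to watch is that the lower order terms coming from the Weyl connection --- the $\Theta$‑terms in the Bochner identity and the drift in $\Delta^W$ --- must be controlled \emph{uniformly in $t$}; this is immediate because $g$ and $\Theta$ are fixed and $M$ is compact. Alternatively, one could quote the corresponding estimate of Jost and Yau for the Hermitian harmonic map heat flow, whose proof applies here with no essential change.
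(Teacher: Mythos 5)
Your argument is correct and follows essentially the same route as the paper: the first Bochner identity of Lemma~\ref{Bpar} gives $\tfrac12(\Delta^W-\partial_t)\norm{du}^2\geqslant -C_*\norm{du}^2$ (your version with $\mathit{Ricci}^W$ and the quadratic $\Theta$-terms is equivalent to the stated one with $\mathit{Ricci}$ and $\nabla\Theta$ via the identity in the proof of Lemma~\ref{BLemma}), and the pointwise bound then follows from the standard local sup-estimate for non-negative subsolutions of uniformly parabolic equations on the cylinder $B\times(t-\delta,t]$, which is exactly the estimate the paper quotes from Ladyzhenskaya--Solonnikov--Ural'ceva. No gaps.
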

\begin{proof}
Since the sectional curvature of $M'$ is non-positive, the first
Bochner identity in Lemma~\ref{Bpar} implies that
$$
\frac{1}{2}\left(\Delta^W-\frac{\partial}{\partial t}\right)
\norm{du(t,x)}^2\geqslant -C_*\norm{du(t,x)}^2,
$$
where $du(t,x)$ denotes the differential with respect to $x\in M$ and
the positive constant $C_*$ depends on the bound for the Ricci
curvature of $M$ and the $C^1$-norm of the Higgs form $\Theta$. Now a
standard estimate for parabolic inequalities implies the claim. More
precisely, for any sufficiently small $R>0$ and $\delta>0$ we have the
following estimate
$$
\sup_{\stackrel{[t-\delta/2,t]}{B(x,R/2)}}\norm{du(t,x)}^2\leqslant
C R^{-(n+2)/2}\left(1+\delta^{-1/2}R\right)
\int_{t-\delta}^t\int_M\norm{du(\tau,x)}^2\mathit{dVol}\,
\mathit{d\tau},
$$
where $t\geqslant\delta$; for the details we refer
to~\cite[Ch.~III]{LSU}.
\end{proof}
\begin{lemma}
\label{existenceHF}
Let $(M,g)$ and $(M',g')$ be closed Riemannian manifolds. Suppose that
$M'$ has non-positive sectional curvature and $M$ is endowed with a
Weyl connection $\nabla^W$. Then for a continuous map $u^0:M\to M'$
there exists a solution of equation~\eqref{PseudoHF} for any positive
time $t$.
\end{lemma}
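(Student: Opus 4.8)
The plan is to run the pseudo-harmonic heat flow \eqref{PseudoHF} starting from $u^{0}$ and to show that its maximal solution is defined for all time. The only feature that distinguishes this from the Eells--Sampson situation is the lower-order Weyl term, which destroys the monotonicity of the Dirichlet energy; the argument is designed to circumvent that.

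First I would record short-time existence and the smoothing property. By the relation $\tau^{W}(u)-\tau(u)=-\tfrac{n-2}{2}\,du(\Theta^{\sharp})$, equation \eqref{PseudoHF} differs from the harmonic map heat flow only by a first-order term, hence is a quasilinear parabolic system of the same type; linearising it and combining the standard $L^{p}$ and Schauder theory for linear parabolic systems with the implicit function theorem --- as in \cite{EeSa,LSU} --- produces, for smooth initial data, a unique smooth solution on a maximal interval $[0,T)$ with $0<T\leqslant+\infty$, and the semigroup property shows this interval is open. For a merely continuous $u^{0}$ one mollifies, or invokes parabolic smoothing, so that the solution is smooth on $M\times(0,T)$. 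Fixing a small $\delta\in(0,T)$, the map $u(\delta,\cdot)$ is then smooth and all further estimates will be run on $[\delta,T)$. I then assume $T<+\infty$ and look for a contradiction.

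The core step is an a priori bound on the energy density. The relevant extension criterion for \eqref{PseudoHF} is that $\norm{du}^{2}$ stay bounded as $t\to T$, so it suffices to bound it on $[\delta,T)$. Since $M'$ has non-positive sectional curvature, the Bochner estimate established in the proof of Claim~\ref{c2} gives, along the flow,
$$
\Big(\frac{\partial}{\partial t}-\Delta^{W}\Big)\norm{du(t,x)}^{2}\leqslant C\norm{du(t,x)}^{2},
$$
where $C$ depends only on a lower bound for the Ricci curvature of $M$ and the $C^{1}$-norm of the Higgs field $\Theta$. Applying the parabolic maximum principle to $t\mapsto\sup_{x\in M}\norm{du(t,x)}^{2}$ yields
$$
\sup_{x\in M}\norm{du(t,x)}^{2}\leqslant\Big(\sup_{x\in M}\norm{du(\delta,x)}^{2}\Big)e^{C(t-\delta)},\qquad t\in[\delta,T),
$$
which is finite and uniformly bounded on $[\delta,T)$ precisely because $T<+\infty$. (Alternatively, a finite-time energy bound can be obtained from Claim~\ref{c1}, which makes $\norm{\tau^{W}(u)}=\norm{\partial u/\partial t}$ non-increasing, together with Corollary~\ref{energy&dist}; the integral $\int_{M}\rho^{2}$ occurring there is controlled from Lemma~\ref{pseudoLaIn} and the maximum principle, and Claim~\ref{c2} then converts the energy bound into a pointwise one.)

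Finally, with $\norm{du}^{2}$ uniformly bounded on $[\delta,T)\times M$, I would bootstrap: parabolic $L^{p}$ estimates followed by Schauder estimates applied to \eqref{PseudoHF} give, for every $k$, uniform $C^{k}$-bounds for $u(t,\cdot)$ on $[T-\varepsilon,T)\times M$, so that $u(t,\cdot)$ converges in $C^{\infty}(M,M')$ as $t\to T$; restarting the flow from this limit extends the solution beyond $T$, contradicting the maximality of $T$, and hence $T=+\infty$. The hard part is the middle step: because the Weyl term kills energy monotonicity, the Eells--Sampson argument via energy decay is unavailable, and the finite-time bound on the energy density must be extracted from the Bochner inequality and the maximum principle. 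A secondary technical nuisance is that $u^{0}$ is only continuous, which forces every estimate to begin at a positive time $\delta$ and to use the smoothing effect of the flow.
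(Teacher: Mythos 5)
Your proposal is correct, but the key a priori estimate is obtained by a genuinely different route from the paper's. The paper never applies the maximum principle directly to $\norm{du}^2$; instead it bounds the flow distance $\rho(t,x)=\rho(u(t,x),u^0(x))$ by $t\sup\norm{\partial u/\partial t}$ (finite by Claim~\ref{c1}), feeds this into Corollary~\ref{energy&dist} to control the energy $\int_M\norm{du(t,\cdot)}^2$, and then uses the parabolic mean-value inequality of Claim~\ref{c2} to convert the energy bound into the pointwise bound $\norm{du(t,x)}^2\leqslant C(1+t^2)$. You instead apply the maximum principle to the Bochner inequality $(\partial_t-\Delta^W)\norm{du}^2\leqslant C\norm{du}^2$ and conclude via Gronwall that $\sup_x\norm{du(t,x)}^2\leqslant e^{C(t-\delta)}\sup_x\norm{du(\delta,x)}^2$; this is legitimate, since at a spatial maximum the gradient term in $\Delta^W=\Delta-\tfrac{n-2}{2}\langle\Theta^\sharp,\nabla\cdot\rangle$ vanishes and the operator has no zeroth-order term, and for the purposes of ruling out finite-time blow-up an exponential-in-$t$ bound is just as good as a polynomial one. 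What your shortcut buys is a more self-contained and elementary proof of the lemma; what it loses is the intermediate relation~\eqref{C^1dist}, which the paper reuses in Claims~\ref{c3} and~\ref{c4} to get bounds that are useful as $t\to+\infty$ (where your exponential estimate is worthless) for the convergence argument in Theorem~\ref{ExTh}. One small inaccuracy in your parenthetical alternative: the paper controls $\int_M\rho^2$ by the elementary arclength bound $\rho\leqslant t\sup\norm{\partial u/\partial t}$ coming from Claim~\ref{c1}, not from Lemma~\ref{pseudoLaIn} and the maximum principle. Your treatment of the merely continuous initial datum via smoothing and restarting at a positive time $\delta$ is, if anything, more careful than the paper's.
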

\begin{proof}
As was mentioned above the set formed by the $t$'s for which a
solutions exists is open and non-empty. By $\rho^2(t,x)=\rho^2(u(t,x),
u^0(x))$ we denote the squared distance along a geodesic homotopy
between $u(t,x)$ and $u^0(x)$ such that the corresponding geodesics
are homotopic to the paths given by the heat flow. Now the combination
of Claim~\ref{c1} and Corollary~\ref{energy&dist} implies the estimate
$$
\int_M\norm{du(t,x)}^2d\mathit{Vol}_g(x)\leqslant \bar C_1\int_M
\rho^2(t,x)d\mathit{Vol}_g(x)+\bar C_2.
$$
By Claim~\ref{c2}, we further obtain
\begin{equation}
\label{C^1dist}
\norm{du(t,x)}^2\leqslant\bar C_3\sup_{\tau\leqslant t}\sup_{x\in
  M}\rho^2(\tau,x)+\bar C_4,
\end{equation}
where $t$ is greater than some $\delta>0$. Note also that
$$
\rho^2(\tau,x)\leqslant\tau^2\sup_{s\in[0,\tau]}\norm{(\partial
  u/\partial t)(s,x)}\leqslant \bar C_5\tau^2;
$$
in the last inequality we used Claim~\ref{c1}. Combining this with the
previous relation, we arrive at the upper bound
$$
\norm{du(t,x)}^2\leqslant C(1+t^2)
$$
with some positive constant $C$ which is independent of $t$ and
$x$. Since by Claim~\ref{c1} we also have an upper bound on
$\norm{(\partial u/\partial t)(t,x)}$, regularity theory for linear
(elliptic and parabolic) equations yields $C^{2,\alpha}$-estimates for
a solution of equation~\eqref{PseudoHF}. This implies that the set of
the $t$'s where a solution exists is closed and, hence, a solution
exists globally.
\end{proof}

The next two estimates are concerned with the values of $t$ greater
than some $\delta>0$.
\begin{claim}
\label{c3}
A solution $u(t,x)$ of equation~\eqref{PseudoHF} satisfies the
following estimate
$$
\sup_{x\in M}\rho^2(t,x)\leqslant C_2\sup_{\tau\leqslant t}
\left(\inf_{x\in M}\rho^2(\tau,x)+\sup_{x\in M}\rho(\tau,x)\right).
$$
Here $\rho(t,x)=\rho(u(t,x),u^0(x))$ denotes the distance along
geodesics which are homotopic to the paths given by the heat flow. The
positive constant $C_2$ depends on the $C^2$-norm of $u^0$, a bound
for $\norm{(\partial u/\partial t)}$, and the same quantities as the
constant $C_1$ in Claim~\ref{c2}.
\end{claim}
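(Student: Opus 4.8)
The plan is to derive a two-sided parabolic differential inequality for the function $\rho^2=\rho^2(t,x)$ and then invoke the parabolic Harnack inequality, in the spirit of the harmonic map heat flow estimates of Schoen and Yau~\cite{ShYau} and of Jost and Yau~\cite{JostYau}. First I would compute $(\Delta^W-\partial_t)\rho^2$ along a solution of~\eqref{PseudoHF}. The spatial contribution is the pseudo-version of the Schoen--Yau calculation already carried out in the proof of Lemma~\ref{pseudoLaIn}, applied with $v=u^0$: one has $\Delta^W\rho^2=\sum_\alpha(r^2)_{X_\alpha X_\alpha}+2\rho(r_i\tau^W(u)^i+r_{\bar\imath}\tau^W(u^0)^{\bar\imath})$, where $X_\alpha=u^i_\alpha e_i+v^{\bar\imath}_\alpha e_{\bar\imath}$, while the time contribution is $\partial_t\rho^2=2\rho\langle\nabla r,\partial_t u\rangle$, so that $\abs{\partial_t\rho^2}\leqslant 2\rho\,\abs{\tau^W(u)}$ since $\partial_t u=\tau^W(u)$ along the flow. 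Now $\abs{\tau^W(u)}$ is controlled by Claim~\ref{c1} and $\abs{\tau^W(u^0)}$ by the $C^2$-norm of $u^0$; combined with the Schoen--Yau lower bound $\sum_\alpha(r^2)_{X_\alpha X_\alpha}\geqslant 2\sum_{i,\alpha}(u^i_\alpha-v^{\bar\imath}_\alpha)^2$ this already yields the first half of the desired estimate, namely $(\Delta^W-\partial_t)\rho^2\geqslant -C(\rho+1)$ with $C$ depending only on the permitted data.

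For the reverse inequality I would bring in the Hessian comparison for the squared distance function of $M'$. Writing $(r^2)_{X_\alpha X_\alpha}$ as the sum of a term quadratic in $\langle\nabla r,X_\alpha\rangle$, which sums to at most $\sum_{i,\alpha}(u^i_\alpha-v^{\bar\imath}_\alpha)^2$ since $\nabla r$ is a difference of unit vectors, and a term involving the Hessian of $r$, one sees that the latter is where a lower curvature bound on $M'$ enters; this bound is available because $M'$ is compact (and non-positively curved), and it leads to an upper estimate $\sum_\alpha(r^2)_{X_\alpha X_\alpha}\leqslant C(\rho+1)(1+\norm{du(t,\cdot)}^2)$, whence $(\Delta^W-\partial_t)\rho^2\leqslant C(\rho+1)(1+\norm{du(t,\cdot)}^2)$. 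The gradient factor is then disposed of by the a priori estimates already at hand: Claim~\ref{c2} bounds $\norm{du(t,\cdot)}^2$ by $\sup_{\tau\leqslant t}\int_M\norm{du(\tau,\cdot)}^2$, and Corollary~\ref{energy&dist} (with $v=u^0$), together with Claim~\ref{c1}, bounds that energy in terms of $\sup_{\tau\leqslant t}\sup_x\rho^2$ and $\sup_{\tau\leqslant t}\sup_x\rho$. Thus $\rho^2$ solves $(\partial_t-\Delta^W)\rho^2=h$ on $M\times[\delta,t]$ with a uniformly parabolic operator and an inhomogeneity $h$ controlled in terms of these same sup-quantities and the admissible constants.

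Finally I would apply the parabolic Harnack inequality for non-negative solutions of such equations (as in~\cite[Ch.~III]{LSU}, already used in the proof of Claim~\ref{c2}) on cylinders $M\times[s,s+\delta]$ of fixed temporal width $\delta$, chained across the interval $[\delta,t]$; here $\Delta^W$ differs from an honest Laplacian only by a bounded first-order drift, which does not affect the Harnack estimate. This produces $\sup_x\rho^2(t,x)\leqslant C(\inf_x\rho^2(\tau_0,x)+\norm h_\infty)$ for a suitable $\tau_0\leqslant t$; re-inserting the bound on $\norm h_\infty$ and absorbing the contributions carrying $\sup_{\tau\leqslant t}\sup_x\rho^2$ onto the left — which a sufficiently small choice of $\delta$ (permitted, since $C_2$ may depend on $\delta$), or a short-time bootstrap, makes possible — leaves the estimate in the asserted form, the remaining sublinear summand $\rho$ accounting for the term $\sup_x\rho(\tau,x)$ on the right. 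The main obstacle is the middle step: producing the upper bound on $\Delta^W\rho^2$ with a constant of the allowed kind, i.e. correctly routing the gradient of the flow through Claim~\ref{c2} and Corollary~\ref{energy&dist} so that it is absorbed rather than left as an uncontrolled factor; once the two-sided parabolic inequality is in place, the Harnack step is routine.
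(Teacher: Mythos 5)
Your lower differential inequality $(\Delta^W-\partial_t)\rho^2\geqslant -C\rho$ is exactly what the paper uses and is fine, but the second half of your plan does not close. The Hessian comparison gives $(r^2)_{X_\alpha X_\alpha}\leqslant C(1+\rho)\abs{X_\alpha}^2$, so your upper bound is $\Delta^W\rho^2\leqslant C(1+\rho)\bigl(\norm{du}^2+\norm{du^0}^2\bigr)+C\rho$. At this stage the only available control on $\norm{du(t,\cdot)}^2$ is relation~\eqref{C^1dist}, i.e. $\norm{du}^2\lesssim\sup_{\tau\leqslant t}\sup_M\rho^2+1$; the sharper bound $\norm{du}^2\lesssim\sup_{\tau\leqslant t}\sup_M\rho$ is Claim~\ref{c4}, which is proved \emph{after}, and by means of, Claim~\ref{c3}, so invoking it here would be circular. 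Hence your inhomogeneity $h$ is of size $(1+\sup\rho)\cdot\sup_{\tau\leqslant t}\sup_M\rho^2$ --- cubic in $\rho$ --- while the quantity being estimated is quadratic. The Harnack step then produces a term $C\delta(1+\sup\rho)\sup_{\tau\leqslant t}\sup_M\rho^2$ on the right, and no choice of $\delta$ independent of the solution absorbs it: you would need $\delta\lesssim(1+\sup\rho)^{-1}$, which presupposes the bound you are proving. There is also a directional problem: the parabolic Harnack inequality bounds the supremum over an \emph{earlier} cylinder by the infimum over a \emph{later} one, whereas you need $\sup_x\rho^2(t,\cdot)$ against $\inf_x\rho^2(\tau_0,\cdot)$ with $\tau_0\leqslant t$; this is repairable via $\abs{\partial_t\rho^2}\leqslant 2\rho\,\abs{\tau^W(u)}\leqslant C\rho$ from Claim~\ref{c1}, but not as written.

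The paper avoids the upper Hessian bound entirely and stays elliptic at fixed $t$: the maximum principle for $\Delta^W\rho^2\geqslant-C\rho$ on $M\setminus B(x_0,R)$, with $x_0$ a minimum point of $\rho(t,\cdot)$, gives $\sup_M\rho^2\leqslant\sup_{\partial B}\rho^2+C\sup_M\rho$, and on the small ball only the first-order Lipschitz bound $\abs{\nabla\rho}\leqslant\norm{du}+\norm{du^0}$ is used, yielding $\sup_{\partial B}\rho^2\leqslant\rho^2(t,x_0)+2R\sup_B\bigl[\rho(\norm{du}+\norm{du^0})\bigr]$. By~\eqref{C^1dist} the last term is $\lesssim R\sup_{\tau\leqslant t}\sup_M\rho^2+R\sup_M\rho$ --- quadratic, with the free small factor $R$ --- and is absorbed by taking $R$ small. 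The essential point is that the gradient of the flow must enter only through $\norm{du}$ to the first power; any route through $\Delta^W\rho^2\leqslant\cdots$ inevitably produces $\norm{du}^2$ and a non-absorbable cubic term.
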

\begin{proof}
First, the combination of Lemma~\ref{pseudoLaIn} and Claim~\ref{c1}
yields the inequality
$$
\Delta^W \rho^2(t,x)\geqslant -C\rho(t,x).
$$
Now let $x_0$ be a point from $M$ where the function $\rho(t,\cdot)$
achieves its minimum. Denote by $R$ a positive real number which is
less than the injectivity radius of $M$. Due to the maximum principle
for elliptic inequalities on the domains $B=B(x_0,R)$ and $M\backslash
B$, see~\cite[Sect.~3.3]{GT}, we obtain
$$
\sup_M\rho^2(t,\cdot)\leqslant\sup_{\partial B}\rho^2(t,\cdot)+\bar
C_6\sup_M\rho(t,\cdot).
$$
It is a straightforward calculation to show that
$$
\sup_{\partial B}\rho^2(t,\cdot)\leqslant\rho^2(t,x_0)+2R\sup_{B}
\left[\rho(t,\cdot)(\norm{du(t,\cdot)}+\norm{du^0})\right].
$$
Combining these inequalities with relation~\eqref{C^1dist} in the
proof of Lemma~\ref{existenceHF}, we get
$$
\sup_M\rho^2(t,\cdot)\leqslant\rho^2(t,x_0)+\bar
C_7\sup_M\rho(t,\cdot)+2R\bar C_3\sup_{\tau\leqslant t}\sup_M
\rho^2(\tau,\cdot).
$$
Now choosing $R$ sufficiently small we demonstrate the claim.
\end{proof}
\begin{claim}
\label{c4}
A solution $u(t,x)$ of equation~\eqref{PseudoHF} satisfies the
following point-wise estimate
$$
\norm{du(t,x)}\leqslant C_3\sup_{\tau\leqslant t}\sup_{x\in M}
\rho(\tau,x)^{1/2}+C_4,
$$
where $\rho(t,x)=\rho(u(t,x),u^0(x))$ is the geodesic distance as
above and positive constants $C_3$ and $C_4$ depend on the same
quantities as $C_2$ in Claim~\ref{c3}.
\end{claim}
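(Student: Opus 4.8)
The plan is to bootstrap the crude gradient bound~\eqref{C^1dist}, which is only quadratic in $\sup\rho$, to one that is linear in $\sup\rho$, by exploiting the Schoen--Yau inequality of Lemma~\ref{pseudoLaIn} more efficiently. Write $\Sigma(t)=\sup_{\tau\leqslant t}\sup_{x\in M}\rho(\tau,x)$, a non-decreasing function of $t$. It is convenient --- and harmless for the purpose of proving Theorem~\ref{ExTh}, since Weyl harmonicity does not depend on the choice of $g\in c$ --- to take $g$ to be the Gauduchon metric of the conformal class; then $d^*\Theta=0$, and since $\Delta^W f=\Delta_g f-\tfrac{n-2}{2}\langle\nabla f,\Theta^\sharp\rangle$ we have $\int_M\Delta^W f\,d\mathit{Vol}_g=0$ for every smooth $f$.

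First I would record a pointwise differential inequality at each fixed time. Applying Lemma~\ref{pseudoLaIn} with $v=u^0$ and the geodesic homotopy taken along the heat flow, using that $\tau^W(u)=\partial u/\partial t$ is bounded by Claim~\ref{c1} while $\norm{\tau^W(u^0)}$ and $\norm{du^0}$ are bounded in terms of the $C^2$-norm of $u^0$, and replacing $2\sum_{i,\alpha}(u^i_\alpha-v^{\bar\imath}_\alpha)^2$ by $\norm{du}^2-2\norm{du^0}^2$ via $2(a-b)^2\geqslant a^2-2b^2$, the inequality~\eqref{SYineq} yields a constant $C_\ast$ --- depending only on the quantities entering $C_2$ in Claim~\ref{c3} --- with
\begin{equation*}
\norm{du(t,x)}^2\leqslant\Delta^W\rho^2(t,x)+C_\ast\bigl(1+\rho(t,x)\bigr),\qquad x\in M,
\end{equation*}
for every $t$ in the interval of existence furnished by Lemma~\ref{existenceHF}.

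Next I would integrate this over $M$ at fixed $t$. The Hessian-type term disappears, $\int_M\Delta^W\rho^2\,d\mathit{Vol}_g=0$, and $\rho\leqslant\Sigma(t)$, whence $\int_M\norm{du(t,x)}^2\,d\mathit{Vol}_g(x)\leqslant C_\ast\,(1+\Sigma(t))\,\mathit{Vol}(M)$. For $t\geqslant\delta$ this $L^2$ bound is promoted to a pointwise one by the parabolic sup-estimate of Claim~\ref{c2}: since $\Sigma$ is non-decreasing,
\begin{equation*}
\norm{du(t,x)}^2\leqslant C_1\sup_{\tau\leqslant t}\int_M\norm{du(\tau,x)}^2\,d\mathit{Vol}_g(x)\leqslant C_1 C_\ast\,\mathit{Vol}(M)\,\bigl(1+\Sigma(t)\bigr),
\end{equation*}
and taking square roots, with $\sqrt{a+b}\leqslant\sqrt a+\sqrt b$, gives the claim with $C_3=C_4=(C_1 C_\ast\,\mathit{Vol}(M))^{1/2}$.

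The content lies not in any single computation but in the two structural observations that make the bound linear, rather than quadratic, in $\sup\rho$: that the right-hand side $\tau^W(u)$ of the heat flow~\eqref{PseudoHF} is a priori $C^0$-bounded by Claim~\ref{c1}, so the tension contribution in~\eqref{SYineq} is only first order in $\rho$; and that after integration over $M$ the term $\int_M\Delta^W\rho^2$ vanishes for the Gauduchon metric, so that only $C_\ast\int_M\rho$, and not $\int_M\rho^2$, survives. Dropping either observation leaves one with nothing better than~\eqref{C^1dist}. A minor point to verify is that the geodesic homotopy used to define $\rho(t,x)$ is precisely the one appearing in Lemma~\ref{pseudoLaIn}, which holds by the construction of the flow.
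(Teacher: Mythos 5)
Your argument is correct, and it reaches the estimate by a genuinely different route from the paper. The paper works with an arbitrary metric $g\in c$, so that integrating the left-hand side of~\eqref{SYineq} by parts leaves the quadratic term $C\int_M\rho^2\,d\mathit{Vol}_g$ of Corollary~\ref{energy&dist}, coming from the failure of $\int_M\Delta^W f\,d\mathit{Vol}_g$ to vanish when $d^*\Theta\ne 0$; to reduce this to something linear in $\sup\rho$ the paper subtracts the $x$-independent quantity $C_2\sup_{\tau\leqslant t}\inf_M\rho^2(\tau,\cdot)$ from $\rho^2(t_*,\cdot)$ --- which leaves the Weyl laplacian unchanged --- and then invokes the oscillation estimate of Claim~\ref{c3} to bound what remains by $C_2\sup_{\tau\leqslant t}\sup_M\rho$. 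You instead fix $g$ to be the Gauduchon metric, so that $d^*\Theta=0$, the integral of $\Delta^W\rho^2$ vanishes identically, and no quadratic term ever appears; Claim~\ref{c3} is then not needed at all. From that point on the two arguments coincide: Claim~\ref{c1} controls the tension terms, Claim~\ref{c2} upgrades the $L^2$ bound on $du$ to a pointwise one, and a square root finishes. Your version is shorter and more transparent, at the small price of proving the claim only for the Gauduchon representative of the conformal class: the flow~\eqref{PseudoHF} does depend on the choice of $g\in c$, so for a non-Gauduchon $g$ the term $C\int_M\rho^2$ survives and some form of Claim~\ref{c3} is again required. Since the paper itself normalises to the Gauduchon metric in Lemma~\ref{std}, and since Weyl harmonicity of the limit is independent of the representative, this restriction is harmless for Theorem~\ref{ExTh}.
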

\begin{proof}
Let $t_*\in[\delta,t]$ be a point where $\sup_M\rho(\tau,\cdot)$
achieves its maximum. Replacing the quantity $\rho^2$ in the left-hand
side of inequality~\eqref{SYineq} by
$$
\bar\rho^2(\cdot)=\rho^2(t_*,\cdot)-C_2\sup_{\tau\leqslant t}
\inf_{x\in M}\rho^2(\tau,x),
$$
we, as in Corollary~\ref{energy&dist}, arrive at the following:
$$
\int_M\norm{du(t_*,\cdot)}^2d\mathit{Vol}_g\leqslant\bar C_8
\int_M\bar\rho^2d\mathit{Vol}_g+\bar C_9\int_M\rho d\mathit{Vol}_g
+\bar C_{10}.
$$
By Claim~\ref{c3} the first integral on the left-hand side is
estimated by
$$
C_2\mathit{Vol}(M)\sup_{\tau\leqslant t}\sup_{x\in M}\rho(\tau,x).
$$
Now the use of Claim~\ref{c2}, combined with elementary
transformations, yields the claimed inequality.
\end{proof}

\subsection{The proof of Theorem~\ref{ExTh}}
The following auxiliary assertion we state as a lemma.
\begin{lemma}
\label{std}
Suppose that the conditions of Theorem~\ref{ExTh} hold. Let $u(t,x)$,
where $t\geqslant 0$ and $x\in M$, be a solution of
equation~\eqref{PseudoHF} such that the norm $\norm{du(t,x)}$ is
bounded uniformly in $t\geqslant 0$ and $x\in M$. Then there exists a
sequence of times $t_n\to +\infty$ such that $u(t_n,\cdot)$ converges
in $C^2$-topology to a Weyl harmonic map homotopic to $u^0$.
\end{lemma}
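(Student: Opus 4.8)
The plan is to run the Eells--Sampson subconvergence argument for the heat flow~\eqref{PseudoHF}, set up with respect to the Gauduchon metric $g\in c$ (so that $d^*\Theta=0$), invoking hypotheses $(i)$--$(ii)$ only at the very end. First, the uniform bounds: since $\norm{du(t,\cdot)}$ is bounded uniformly in $t$ by assumption, Claim~\ref{c1} gives that $\norm{(\partial u/\partial t)(t,\cdot)}$ is bounded and non-increasing in $t$. Writing~\eqref{PseudoHF} in local coordinates as a quasilinear parabolic system with uniformly elliptic principal part and an inhomogeneity which is a bounded function of $u$ and $du$ (the latter now bounded), the linear $L^p$- and Schauder estimates for parabolic systems (\cite[Ch.~III]{LSU}, exactly as in Claim~\ref{c2} and the proof of Lemma~\ref{existenceHF}, but now with the \emph{uniform} bound on $\norm{du}$) upgrade these to uniform-in-$t$ $C^{2,\gamma}$-bounds on the maps $u(t,\cdot)$, $t\ge 1$.

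Next I would introduce a monotone functional. Put $f(t)=\int_M\norm{(\partial u/\partial t)(t,\cdot)}^2\,d\mathit{Vol}_g$. The second Bochner identity of Lemma~\ref{Bpar}, already used in the proof of Claim~\ref{c1}, has the form
\[
\tfrac12\left(\Delta^W-\frac{\partial}{\partial t}\right)\norm{\partial u/\partial t}^2
=\norm{\widetilde\nabla(\partial u/\partial t)}^2
-\sum_\alpha\left\langle R'\bigl(\partial u/\partial t,\,du\cdot e_\alpha\bigr)(\partial u/\partial t),\,du\cdot e_\alpha\right\rangle ,
\]
in which the curvature sum is non-negative because $M'$ has non-positive sectional curvature. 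Integrating over $M$ and using that $\int_M\Delta^W\varphi\,d\mathit{Vol}_g=-\tfrac{n-2}{2}\int_M\varphi\,d^*\Theta=0$ for the Gauduchon metric, one obtains $f'(t)\le-2\int_M\norm{\widetilde\nabla(\partial u/\partial t)}^2\,d\mathit{Vol}_g\le 0$. Hence $f$ is non-increasing and bounded below, so $\int_0^{\infty}\int_M\norm{\widetilde\nabla(\partial u/\partial t)}^2\,d\mathit{Vol}_g\,dt\le f(0)<\infty$, and there is a sequence $t_n\to\infty$ with $\int_M\norm{\widetilde\nabla(\partial u/\partial t)(t_n,\cdot)}^2\,d\mathit{Vol}_g\to 0$.

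Then I would pass to the limit. By the uniform $C^{2,\gamma}$-bounds, a subsequence of $u(t_n,\cdot)$ converges in $C^2$ to a map $u_\infty$; since $t\mapsto u(t,\cdot)$ is a homotopy from $u^0$ and $u(t_n,\cdot)\to u_\infty$ uniformly, $u_\infty$ is homotopic to $u^0$. Moreover $\tau^W(u(t_n,\cdot))=(\partial u/\partial t)(t_n,\cdot)\to\tau^W(u_\infty)=:V_\infty$ in $C^0$; expressing $V_n:=(\partial u/\partial t)(t_n,\cdot)$ and the pull-back connections $u(t_n,\cdot)^*\nabla'$ in local frames (whose Christoffel symbols converge because $u(t_n,\cdot)\to u_\infty$ in $C^1$), the vanishing $\int_M\norm{\widetilde\nabla V_n}^2\to 0$ together with $V_n\to V_\infty$ in $C^0$ forces $\widetilde\nabla V_\infty=0$, i.e.\ $V_\infty$ is a parallel section of $u_\infty^*TM'$. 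It remains to see $V_\infty=0$. Under $(i)$ this is immediate, since the homotopy class of $u^0$ contains $u_\infty$ and, by assumption, no map whose pull-back bundle carries a non-trivial parallel section. Under $(ii)$: if $V_\infty\ne 0$ then it is nowhere zero ($M$ connected), and differentiating $\widetilde\nabla V_\infty=0$ yields $R'(du_\infty X,du_\infty Y)V_\infty=0$ for all $X,Y\in TM$; strict negativity of the curvature of $M'$ then forces $\rank du_\infty\le 1$ everywhere, so that, arguing as for Proposition~\ref{rank<1}, $u_\infty$ maps $M$ onto a closed geodesic -- contradicting $(ii)$. In both cases $\tau^W(u_\infty)=0$, so $u_\infty$ is the desired Weyl harmonic map homotopic to $u^0$.

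I expect the last step for case $(ii)$ to be the main obstacle. The pseudo-harmonic heat flow carries no globally monotone energy -- the would-be identity $E'(t)=-f(t)-\tfrac{n-2}{2}\int_M\langle du(\Theta^\sharp),\,\partial u/\partial t\rangle\,d\mathit{Vol}_g$ has an uncontrolled last term -- so the argument above produces only a limiting map with \emph{parallel} $\tau^W(u_\infty)$ rather than a Weyl harmonic one directly. Ruling out such a parallel section along a map in the prescribed homotopy class under $(ii)$ (equivalently, analysing a time-shifted ``eternal'' flow $u(\,\cdot+t_n,\,\cdot\,)$ to show $f(t)\to 0$) is the delicate point; by contrast the parabolic regularity and the Bochner computation are routine.
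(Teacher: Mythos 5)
Your strategy is essentially the paper's: integrate the second Bochner identity of Lemma~\ref{Bpar} over $M$ with respect to the Gauduchon metric (so that the $\Delta^W$-term integrates to zero), use the resulting monotonicity of $\int_M\norm{\partial u/\partial t}^2d\mathit{Vol}$ to extract times $t_n\to+\infty$ along which the right-hand side tends to zero, upgrade the assumed uniform bound on $\norm{du}$ together with Claim~\ref{c1} to $t$-independent $C^{2,\alpha}$-bounds by parabolic regularity, and pass to a $C^2$-limit $u_\infty$ homotopic to $u^0$ whose tension field $V_\infty=\tau^W(u_\infty)$ is a parallel section of $u_\infty^*TM'$. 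Through case~$(i)$ this is exactly what the paper does and is correct; your closing worry about the lack of a monotone energy is unfounded, since no statement that $f(t)\to 0$ is needed, only the subsequential vanishing you already have.

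The treatment of case~$(ii)$, however, contains a genuine error. From $\widetilde\nabla V_\infty=0$ you pass to the holonomy identity $R'(du_\infty X,du_\infty Y)V_\infty=0$ and claim that strict negativity of the sectional curvature of $M'$ forces $\rank du_\infty\leqslant 1$. This implication is false: in constant curvature one has $R'(A,B)V=\pm k\left(\langle B,V\rangle A-\langle A,V\rangle B\right)$, so for linearly independent $A,B$ in the image of $du_\infty$ the identity $R'(A,B)V_\infty=0$ merely says that $V_\infty$ is orthogonal to that image, and a non-zero such $V_\infty$ is perfectly compatible with $\rank du_\infty=2$ in, say, a quotient of $H^3_{\mathbf R}$. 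The information you actually need is contained in the curvature term of the Bochner identity, which you discarded after noting it has a sign: keeping it, the time-integral of the non-negative quantity $\sum_{\alpha}\left(-\langle R'(du\cdot e_\alpha,\partial u/\partial t)du\cdot e_\alpha,\partial u/\partial t\rangle\right)$ is finite, so along (a subsequence of) the $t_n$ it tends to zero and in the limit $\langle R'(du_\infty\cdot e_\alpha,V_\infty)du_\infty\cdot e_\alpha,V_\infty\rangle=0$ for every $\alpha$. Under strictly negative sectional curvature this forces each $du_\infty\cdot e_\alpha$ to be proportional to $V_\infty$, hence $\rank du_\infty\leqslant 1$ wherever $V_\infty\neq 0$, and the excluded map onto a closed geodesic appears. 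This is precisely how the paper closes case~$(ii)$; with that replacement your proof is complete.
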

\begin{proof}
First, recall the Bochner identity, Lemma~\ref{Bpar}:
\begin{equation}
\label{B}
\frac{1}{2}\left(\Delta^W-\frac{\partial}{\partial t}\right)
\kappa=\norm{\nabla (\partial u/\partial t)}^2-\sum_{\alpha}
\langle R'(du\cdot e_\alpha,(\partial u/\partial t))du\cdot 
e_\alpha,(\partial u/\partial t)\rangle_{g'},
\end{equation}
where $\kappa(t,x)\geqslant 0$ denotes the squared norm
$\norm{(\partial u/\partial t)}^2$. Without loss of generality we may
assume that the metric $g$ on $M$ is Gauduchon, i.~e. the
corresponding Higgs field $\Theta$ is co-closed. This implies that the
integral of $\Delta^W\!\!\kappa$ over $M$ vanishes. Thus integrating
identity~\eqref{B} and using the curvature hypothesis we conclude,
that there exists a sequence of times $t_n\to +\infty$ such that each
term in the right-hand side in~\eqref{B} converges to zero point-wise
along $t_n$.

Now we return to the solution $u(t,x)$ of the heat flow equation.
By regularity theory for linear (elliptic and parabolic) equations,
the bound on $\norm{du(t,x)}$ together with Claim~\ref{c1} imply, in a
standard manner, $C^{2,\alpha}$-bounds on $u(t,\cdot)$ and $(\partial
u/\partial t)(t,\cdot)$ independent of $t\geqslant 0$. Since the
embeddings $C^{2,\alpha}\subset C^2$ are compact, we obtain a
subsequence of the $t_n$'s (denoted by the same symbol) such that
$u(t_n,\cdot)$ and $(\partial u/\partial t)(t_n,\cdot)$ converge in
$C^2$-topology to a map $u_\infty(\cdot)$ and a vector field $\matheur
v(\cdot)$ respectively. Moreover, since any term in the right-hand
side in~\eqref{B} converges to zero along $t_n$, we conclude that
$\matheur v$ is a parallel vector field along $u_\infty$. By the
suppositions of Theorem~\ref{ExTh}, part~$(i)$, it vanishes and
passing to the limit in equation~\eqref{PseudoHF}, we obtain that
$u_\infty$  is a Weyl harmonic map. The vanishing of the curvature
term in~\eqref{B} implies the same conclusion under the hypotheses in
the part~$(ii)$.
\end{proof}

For a proof of the theorem it is sufficient to show that the quantity
$\rho(u(t,x),u^0(x))$ is bounded uniformly in $t\geqslant 0$ and $x\in
M$. Indeed, in this case Claim~\ref{c4} implies the bound on
$\norm{du(t,x)}$ and an application of Lemma~\ref{std} yields a
Weyl harmonic map. Suppose the contrary. Then there exists a
sequence $t_n\to +\infty$ such that
$$
\rho(t_n,x)=\rho(u(t_n,x),u^0(x))\to +\infty,\qquad\text{as}\quad n\to
+\infty,
$$
for some $x\in M$. Moreover, Claim~\ref{c3} implies that $\rho(t_n,x)\to
+\infty$ for any $x\in M$. Without loss of generality, we can suppose
that the sequence $\sup_{M}\rho(t_n,\cdot)$ is non-decreasing.

Denote by $\gamma^n_x$ a geodesic joining points $u^0(x)$ and
$u(t_n,x)$ as always in the homotopy class determined by the heat flow
homotopy. Let $T_n$ be the maximum of their lengths as $x$ ranges over
$M$ and suppose that all paths $\gamma^n_x$ are parameterised by
$\tau\in[0,T_n]$; in other words, $\gamma^n_x(0)=u^0(x)$ and
$\gamma^n_x(T_n)=u(t_n,x)$. The sequence of these $T_n$'s converges to
infinity and, after a selection of subsequence, $\gamma^n_x$ converges
to a geodesic ray $\gamma_x$. In sequel, we also suppose that the
sequence $T_n$ is not decreasing.

Consider the function $d(\gamma^n_{x_1}(\tau),\gamma^n_{x_2}(\tau))$
for arbitrary $x_1$ and $x_2\in M$, where the distance is measured in
some fixed homotopy class of arcs connecting $\gamma^n_{x_1}(\tau)$
and $\gamma^n_{x_2}(\tau)$. Claim~\ref{c4} implies the estimate
$$
d(\gamma^n_{x_1}(T_n),\gamma^n_{x_2}(T_n))\leqslant C\sup_M\rho(t_n,
\cdot)^{1/2}+C'.
$$
The right-hand side can be estimated by $\bar CT_n^{1/2}$ for a
sufficiently large $n$, and we arrive at the following
inequality
\begin{equation}
\label{aux1}
d(\gamma^n_{x_1}(T_n),\gamma^n_{x_2}(T_n))\leqslant \bar CT_n^{1/2}.
\end{equation}
Further, since $M'$ has non-positive sectional curvature, the function
$$
\tau\longmapsto d(\gamma^n_{x_1}(\tau),\gamma^n_{x_2}(\tau))
$$
is convex; see (the proof of) Lemma~\ref{pseudoLaIn}. This implies
that for a fixed $\tau\geqslant 0$
\begin{multline*}
\lim_{n\to\infty}d(\gamma^n_{x_1}(\tau),\gamma^n_{x_2}(\tau))\leqslant
\lim_{n\to\infty}\left[\left(1-\frac{\tau}{T_n}\right)d(\gamma^n_{x_1}(0),
\gamma^n_{x_2}(0))+\frac{\tau}{T_n}d(\gamma^n_{x_1}(T_n),
\gamma^n_{x_2}(T_n))\right]\\
\leqslant\lim_{n\to\infty}\left[\left(1-\frac{\tau}{T_n}\right)
d(\gamma^n_{x_1}(0),\gamma^n_{x_2}(0))+\frac{\tau}{T_n}\bar C T_n^{1/2}
\right]=d(\gamma_{x_1}(0),\gamma_{x_2}(0)).
\end{multline*}
The second inequality above follows from relation~\eqref{aux1}. Thus,
the limiting rays $\gamma_{x_1}$ and $\gamma_{x_2}$ have to satisfy
the relation
\begin{equation}
\label{aux2}
d(\gamma_{x_1}(\tau),\gamma_{x_2}(\tau))\leqslant 
d(\gamma_{x_1}(0),\gamma_{x_2}(0))\qquad\text{for
  every}\quad\tau\geqslant 0.
\end{equation}
For any positive $\tau$ define a map $u^\tau:M\to M'$ by the rule
$u^\tau(x)=\gamma_x(\tau)$. Then relation~\eqref{aux2} implies that
$$
\norm{du^\tau(x)}\leqslant\norm{du^0(x)},\qquad x\in M.
$$
Now suppose that $u^0$ is a harmonic map; it exists in any homotopy
class due to Eells and Sampson~\cite{EeSa}. Since it is energy
minimising the inequality above shows that every $u^\tau$ has to be
also harmonic. Now we are in the presence of a contradiction: under
the suppositions of the theorem, in both cases~$(i)$ and~$(ii)$, the
harmonic map is unique in the homotopy class under consideration;
see~\cite[Th.~1-2]{ShYau}.
\qed

\subsection{The proof of Theorem~\ref{UniTh}}
We follow the method of Schoen and Yau~\cite{ShYau}. Let $H$ be a
homotopy between pseudo-harmonic maps $u$ and $v$ and
$\rho^2=\rho^2_H(u,v)$ be the corresponding squared distance
function. By Lemma~\ref{pseudoLaIn}, the latter satisfies the
inequality $\Delta^W\rho^2\geqslant 0$ and, hence by the maximum
principle, is constant. From relation~\eqref{PseudoDist} we conclude
that $(r^2)_{X_\alpha X_\alpha}$ vanishes for any
$1\leqslant\alpha\leqslant n$. Lifting to the universal covers we
obtain that $({\tilde r}^2)_{\tilde X_\alpha\tilde X_\alpha}=0$, where
$\tilde X_\alpha$ denote the vectors $\tilde u^i_\alpha e_i+\tilde
v^i_\alpha e_{\bar\imath}$. Now~\cite[Prop.~1]{ShYau} applies and we
find vector fields $W_\alpha$ which are parallel along the unique
geodesic $\tilde\gamma_x$ joining the points $\tilde u(x)$ and $\tilde
v(x)$ and are such that
$$
W_\alpha(\tilde u(x))=\tilde u^i_\alpha e_i(\tilde
u(x))\quad\text{and}\quad
W_\alpha(\tilde v(x))=\tilde v^i_\alpha e_i(\tilde v(x)).
$$
Since $\tilde r$ is constant, we can parameterise each
$\tilde\gamma_x$ on $[0,1]$ proportional (independent on $x\in\tilde
M$) to arclength. We define a one-parameter family of maps $u_s:\tilde
M\to\tilde M'$ by setting $\tilde u_s(x)=\tilde\gamma_x(s)$. Then, the
parallelism of the $W_\alpha$'s implies that
$$
W_\alpha(\gamma_x(s))=(\tilde u_s)^i_\alpha e_i(\tilde u_s(x)),
\qquad s\in [0,1],\quad x\in M.
$$
Besides, the maps $\tilde u_s$ descend to the maps $u_s:M\to M'$ such
that $u_0=u$ and $u_1=v$.

Now we explain why each $u_s$ is a Weyl harmonic map. If it is not
than by Lemma~\ref{existenceHF} we can find a deformation $u_{s,t}$,
where $u_{s,t}$ is a solution of the heat flow with initial map
$u_s$. The same argument as in the proof of Lemma~\ref{Bpar} yields
the formula
\begin{multline*}
\frac{1}{2}\left(\Delta^W-\frac{\partial}{\partial t}\right)
\norm{(\partial u_s/\partial s)}^2=\norm{\nabla (\partial u_s/\partial
  s)}^2\\-\sum_{\alpha}\langle R'(du_s\cdot e_\alpha,(\partial u_s/\partial
  s))du_s\cdot e_\alpha,(\partial u_s/\partial s)\rangle,
\end{multline*}
where $e_\alpha$ is an orthonormal basis for $T_xM$. In particular,
$$
\frac{1}{2}\left(\Delta^W-\frac{\partial}{\partial t}\right)
\norm{(\partial u_s/\partial s)}^2\geqslant 0,\qquad t\in [0,+\infty),
\quad s\in [0,1],
$$
and the maximum principle shows that the length of the curve $s\mapsto
u_{s,t}(x)$ for any $t>0$ and $x\in M$ is not greater than the length
of the geodesic $s\mapsto u_s(x)$. Since the curves are homotopic with
fixed end-points, the former has to be also a geodesic and, moreover,
does not depend on $t$ and coincides with the curve $u_s(x)$. Now from
the heat flow equation, we conclude that each $u_s$ is a Weyl harmonic
map.

The statement on the parallelism of $(\partial u_s/\partial s)$
follows from the observation that the operator field $du_s(x)$ is
parallel along each geodesic $\gamma_x$. Indeed, for an orthonormal
basis $\{ e_\alpha\}$ for $T_xM$, each vector field $du(e_\alpha)$
coincides with $W_\alpha$ and is parallel along $\gamma_x$. In
particular, we see that the energy density
$$
e(du_s)(x)=\sum_\alpha\abs{du_s(e_\alpha)}^2(x)
$$
is constant along $\gamma_x$.

Finally, note that the vectors $W_\alpha$ span the image of
$du_s$. When $M'$ has negative sectional
curvature~\cite[Prop.~1]{ShYau} implies that the $W_\alpha$'s are
proportional to the tangent vector $\dot\gamma_x$ and, hence, the rank
of $du_s(x)$ is not greater than one for each $x\in M$ and $s\in
[0,1]$. This yields the last statement of the theorem.
\qed

\appendix
\section{Appendix: Bochner-type identities for the pseudo-harmonic maps
  and solutions of the corresponding heat flow.}
In this appendix we collected the Bochner-type identities for the
pseudo-harmonic setting, which are important for the analysis
above. First, we consider the solutions of the elliptic equation
\begin{equation}
\label{pseudoHME}
\tau^W(u)(x)=0,\qquad x\in M.
\end{equation}
As above the symbol $\Delta^W$ stands for  the Weyl laplacian,
$\mathit{trace}_g\nabla^W d$.
\begin{lemma}
\label{Bell}
Let $(M,g)$ and $(M',g')$ be arbitrary Riemannian manifolds and
suppose that $M$ is endowed with a Weyl connection $\nabla^W$
preserving $g$. Then for any smooth solution of
equation~\eqref{pseudoHME} the following relation holds
\begin{multline*}
\frac{1}{2}\Delta^W\norm{du}^2=\norm{\nabla du}^2-\sum_{\alpha,\beta}
\langle R'(du\cdot e_\alpha,du\cdot e_\beta)du\cdot e_\alpha,du\cdot
e_\beta\rangle_{g'}\\
+\sum_{\alpha}\left[\mathit{Ricci}(X_\alpha,X_\alpha)+\frac{n-2}{2}\left(
\nabla_{X_\alpha}\Theta\right)(X_\alpha)\right],
\end{multline*}
where $X_\alpha=(u^*\phi^\alpha)^\sharp$ and the systems
$\{e_\alpha\}$ and $\{\phi^\alpha\}$ are orthonormal bases in $T_\cdot
M$ and $T_{u{\scriptscriptstyle (\cdot)}}^*M'$ respectively at the point under
consideration; the symbol $\mathit{Ricci}$ denotes the Ricci
curvature of $M$ and $R'$ stands for the curvature tensor of $M'$.
\end{lemma}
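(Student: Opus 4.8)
The plan is to reduce the identity to the classical Bochner--Weitzenb\"ock formula of Eells and Sampson~\cite{EeSa}, after two elementary reductions: expressing the Weyl Laplacian $\Delta^W$ through the Laplace--Beltrami operator $\Delta$ of $g$, and using the hypothesis $\tau^W(u)=0$ together with~\eqref{Weyl&HME} to rewrite the tension field $\tau(u)$ of $u$. Throughout, $\widetilde\nabla$ denotes the connection induced by $\nabla$ on $M$ and $\nabla'$ on $M'$ on the relevant tensor bundle, as in~\eqref{SFF}.

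First I would record that on functions $\Delta^Wf=\Delta f-\tfrac{n-2}{2}\,df(\Theta^\sharp)$. This follows from~\eqref{WeylDef} by writing $\nabla_XY-\nabla^W_XY=\tfrac12\Theta(X)Y+\tfrac12\Theta(Y)X-\tfrac12 g(X,Y)\Theta^\sharp$, reading off the difference of Hessians $\nabla^Wdf(X,Y)-\nabla df(X,Y)=\tfrac12\Theta(X)df(Y)+\tfrac12\Theta(Y)df(X)-\tfrac12 g(X,Y)df(\Theta^\sharp)$, and tracing over an orthonormal frame; alternatively it is~\eqref{Weyl&HME} applied to the scalar-valued map $f$. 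Applying this with $f=\norm{du}^2$ turns the left-hand side of the lemma into $\tfrac12\Delta\norm{du}^2-\tfrac{n-2}{4}\,d(\norm{du}^2)(\Theta^\sharp)$. Into the first term I would feed the Bochner formula for the energy density of an arbitrary smooth map with respect to the Levi-Civita connections of $g$ and $g'$,
$$\tfrac12\Delta\norm{du}^2=\norm{\nabla du}^2+\sum_\alpha\langle\widetilde\nabla_{e_\alpha}\tau(u),du\cdot e_\alpha\rangle+\sum_\alpha\mathit{Ricci}(X_\alpha,X_\alpha)-\sum_{\alpha,\beta}\langle R'(du\cdot e_\alpha,du\cdot e_\beta)du\cdot e_\alpha,du\cdot e_\beta\rangle,$$
see~\cite{EeSa} (with the sign conventions of the present paper; here $\nabla du=\mathcal D^2u$ is the second fundamental form). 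So everything reduces to computing the tension-field term under the assumption $\tau^W(u)=0$.

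By~\eqref{Weyl&HME}, $\tau^W(u)=0$ means $\tau(u)=\tfrac{n-2}{2}du(\Theta^\sharp)$, whence $\sum_\alpha\langle\widetilde\nabla_{e_\alpha}\tau(u),du\cdot e_\alpha\rangle=\tfrac{n-2}{2}\sum_\alpha\langle\widetilde\nabla_{e_\alpha}(du(\Theta^\sharp)),du\cdot e_\alpha\rangle$. By the Leibniz rule $\widetilde\nabla_{e_\alpha}(du(\Theta^\sharp))=(\widetilde\nabla_{e_\alpha}du)(\Theta^\sharp)+du(\nabla_{e_\alpha}\Theta^\sharp)$. For the first summand I would use that $\mathcal D^2u=\widetilde\nabla du$ is symmetric (both connections being torsion-free), so $(\widetilde\nabla_{e_\alpha}du)(\Theta^\sharp)=(\widetilde\nabla_{\Theta^\sharp}du)(e_\alpha)$, and then the metric property of $\widetilde\nabla$ gives $\sum_\alpha\langle(\widetilde\nabla_{e_\alpha}du)(\Theta^\sharp),du\cdot e_\alpha\rangle=\tfrac12\,d(\norm{du}^2)(\Theta^\sharp)$. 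For the second summand, writing $h=u^*g'$ for the (symmetric) pull-back tensor, one computes $\sum_\alpha\langle du(\nabla_{e_\alpha}\Theta^\sharp),du\cdot e_\alpha\rangle=\sum_\alpha h(\nabla_{e_\alpha}\Theta^\sharp,e_\alpha)=\sum_{\alpha,\beta}(\nabla_{e_\alpha}\Theta)(e_\beta)\,h(e_\alpha,e_\beta)=\sum_\alpha(\nabla_{X_\alpha}\Theta)(X_\alpha)$, the last equality passing from the orthonormal $TM$-frame $\{e_\alpha\}$ to the $T^*M'$-frame notation $X_\alpha=(u^*\phi^\alpha)^\sharp$ (using $\sum_\alpha\phi^\alpha(v)\phi^\alpha(w)=\langle v,w\rangle_{g'}$). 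Thus $\sum_\alpha\langle\widetilde\nabla_{e_\alpha}\tau(u),du\cdot e_\alpha\rangle=\tfrac{n-2}{4}\,d(\norm{du}^2)(\Theta^\sharp)+\tfrac{n-2}{2}\sum_\alpha(\nabla_{X_\alpha}\Theta)(X_\alpha)$.

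Substituting this and the Bochner formula into the expression $\tfrac12\Delta^W\norm{du}^2=\tfrac12\Delta\norm{du}^2-\tfrac{n-2}{4}\,d(\norm{du}^2)(\Theta^\sharp)$, the two copies of $\tfrac{n-2}{4}\,d(\norm{du}^2)(\Theta^\sharp)$ cancel and one lands exactly on the asserted identity. I expect the only step demanding genuine care to be the bookkeeping in the third paragraph: matching the summation over the $T^*M'$-frame $\{\phi^\alpha\}$ (in which $X_\alpha$, $\mathit{Ricci}(X_\alpha,X_\alpha)$ and $(\nabla_{X_\alpha}\Theta)(X_\alpha)$ are written) with the summation over the $TM$-frame $\{e_\alpha\}$, and noticing that only the symmetric part of $\nabla\Theta$ survives the pairing with the symmetric tensor $h$, so that the antisymmetric part $\tfrac12\,d\Theta$ plays no role; the remaining content is the standard Weitzenb\"ock calculation underlying~\cite{EeSa}.
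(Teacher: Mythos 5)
Your proposal is correct and follows essentially the same route as the paper: both start from the classical Eells--Sampson Bochner formula, substitute $\tau(u)=\tfrac{n-2}{2}\,du(\Theta^\sharp)$ via~\eqref{Weyl&HME}, split the tension term by the Leibniz rule, absorb the $(\widetilde\nabla_{e_\alpha}du)(\Theta^\sharp)$ part into the passage from $\Delta$ to $\Delta^W$, and identify the remaining piece with $\sum_\alpha(\nabla_{X_\alpha}\Theta)(X_\alpha)$. You merely make explicit the ``straightforward calculation'' that the paper leaves to the reader, namely the cancellation of the two copies of $\tfrac{n-2}{4}\,d(\norm{du}^2)(\Theta^\sharp)$.
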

\begin{proof}
First, recall the general Bochner formula for any smooth map $u:M\to
M'$, see~\cite{Jo},
\begin{multline}
\label{Bochner}
\frac{1}{2}\Delta\norm{du}^2=\norm{\nabla du}^2-\sum_{\alpha,\beta}
\langle R'(du\cdot e_\alpha,du\cdot e_\beta)du\cdot e_\alpha,du\cdot
e_\beta\rangle_{g'}\\
+\sum_{\alpha}\mathit{Ricci}(X_\alpha,X_\alpha)+\sum_\alpha\langle
\nabla'_{e_\alpha}\tau(u),du\cdot e_\alpha\rangle_{g'},
\end{multline}
where $\nabla$ and $\mathit{Ricci}$ stand for the Levi-Civita
connection of $g$ and its Ricci curvature respectively. Since the map
$u$ is pseudo-harmonic, relation~\eqref{Weyl&HME} implies that the
tension field $\tau(u)$ is equal to $c_ndu(\Theta^\sharp)$, where the
constant $c_n$ equals $((n-2)/2)$. Hence, for the last term in
identity~\eqref{Bochner} we obtain the following:
$$
\sum_\alpha\langle\nabla'_{e_\alpha}\tau(u),du\cdot e_\alpha
\rangle_{g'}=c_n\sum_\alpha\langle(\widetilde\nabla_{e_\alpha}du)\cdot
\Xi^\sharp,du\cdot e_\alpha\rangle_{g'}+c_n\sum_\alpha\langle du\cdot 
(\nabla_{e_\alpha}\Xi^\sharp),du\cdot e_\alpha\rangle_{g'}.
$$
It is a straightforward calculation to show that the first term in
this sum together with the quantity $(1/2)\Delta\norm{du}^2$ in the
left-hand side in~\eqref{Bochner} gives exactly the Weyl laplacian
$(1/2)\Delta^W\norm{du}^2$. The second term can be further
transformed as
$$
c_n\sum_{\alpha,\beta}\left(\nabla_{e_\alpha}\Xi\right)(e_\beta)\langle 
du\cdot e_\alpha,du\cdot e_\beta\rangle_{g'}=c_n\sum_{\alpha}\left(
\nabla_{X_\alpha}\Xi\right)(X_\alpha).
$$
Now the lemma follows by the combination of this with the Bochner
formula~\eqref{Bochner}.
\end{proof}
The next lemma is concerned with solutions of the pseudo-harmonic map
heat flow, the corresponding parabolic equation;
\begin{equation}
\label{pseudoHF}
\frac{\partial}{\partial t}u(t,x)=\tau^W(u)(t,x),\quad
u(0,x)=u^0(x),\qquad x\in M,~ t\in [0,+\infty).
\end{equation}
\begin{lemma}
\label{Bpar}
Let $(M,g)$ and $(M',g')$ be arbitrary Riemannian manifolds and
suppose that $M$ is endowed with a Weyl connection $\nabla^W$
preserving the conformal class of $g$. Then for any smooth solution of
equation~\eqref{pseudoHF} the following relations hold
\begin{multline*}
\frac{1}{2}\left(\Delta^W-\frac{\partial}{\partial t}\right)
\norm{du}^2=\norm{\nabla du}^2-\sum_{\alpha,\beta}
\langle R'(du\cdot e_\alpha,du\cdot e_\beta)du\cdot e_\alpha,du\cdot
e_\beta\rangle_{g'}\\
+\sum_{\alpha}\left[\mathit{Ricci}(X_\alpha,X_\alpha)+\frac{n-2}{2}\left(
\nabla_{X_\alpha}\Theta\right)(X_\alpha)\right],
\end{multline*}
$$
\frac{1}{2}\left(\Delta^W-\frac{\partial}{\partial t}\right)
\norm{(\partial u/\partial t)}^2=\norm{\nabla (\partial u/\partial
  t)}^2-\sum_{\alpha}\langle R'(du\cdot e_\alpha,(\partial u/\partial
  t))du\cdot e_\alpha,(\partial u/\partial t)\rangle_{g'},
$$
where $X_\alpha=(u^*\phi^\alpha)^\sharp$ and the systems
$\{e_\alpha\}$ and $\{\phi^\alpha\}$ are orthonormal bases in $T_\cdot
M$ and $T_{u{\scriptscriptstyle (\cdot)}}^*M'$ respectively at the point under
consideration; the symbol $\mathit{Ricci}$ denotes the Ricci
curvature of $M$ and $R'$ stands for the curvature tensor of $M'$.
\end{lemma}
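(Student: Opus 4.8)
The two identities are the parabolic counterparts of the static Bochner formula of Lemma~\ref{Bell} (for the first) and of the standard Weitzenb\"ock identity on the pull-back bundle (for the second); my plan is to reduce each to the corresponding Riemannian heat-flow computation of Eells-Sampson and Jost, the only new ingredient being the relation~\eqref{Weyl&HME} between $\tau^W(u)$ and the harmonic map operator $\tau(u)$, which has already been exploited in the proof of Lemma~\ref{Bell}.

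For the first identity I would start from the general Bochner formula~\eqref{Bochner} applied to $u(t,\cdot)$ at a fixed time and compute the time derivative separately. Since the domain connection does not depend on $t$ and $\nabla'$ is torsion-free, the covariant time derivative of $du\cdot e_\alpha$ equals $\nabla'_{e_\alpha}(\partial u/\partial t)$, hence $(1/2)(\partial/\partial t)\norm{du}^2=\sum_\alpha\langle\nabla'_{e_\alpha}(\partial u/\partial t),du\cdot e_\alpha\rangle_{g'}$. Subtracting this from~\eqref{Bochner} leaves the $\norm{\nabla du}^2$, the curvature and the Ricci terms untouched, and turns the last term of~\eqref{Bochner} into $\sum_\alpha\langle\nabla'_{e_\alpha}(\tau(u)-\partial u/\partial t),du\cdot e_\alpha\rangle_{g'}$. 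Using $\partial u/\partial t=\tau^W(u)$ together with~\eqref{Weyl&HME} gives $\tau(u)-\partial u/\partial t=((n-2)/2)\,du(\Theta^\sharp)$, which is precisely the expression analysed in the proof of Lemma~\ref{Bell}: a product-rule split of $((n-2)/2)\sum_\alpha\langle\nabla'_{e_\alpha}(du(\Theta^\sharp)),du\cdot e_\alpha\rangle_{g'}$ produces one term that upgrades $(1/2)\Delta\norm{du}^2$ to $(1/2)\Delta^W\norm{du}^2$ (the $\partial/\partial t$ term being untouched, since $\Delta^W-\Delta$ is a first order spatial operator) and one term equal to $((n-2)/2)\sum_\alpha(\nabla_{X_\alpha}\Theta)(X_\alpha)$. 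This yields the first identity, which is the statement of Lemma~\ref{Bell} with the extra $-(1/2)(\partial/\partial t)\norm{du}^2$ on the left-hand side.

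For the second identity, put $V=\partial u/\partial t$, a section of $u^*TM'$. I would use the pointwise Weitzenb\"ock identity $(1/2)\Delta^W\norm{V}^2=\norm{\nabla V}^2+\langle\Delta^W_u V,V\rangle_{g'}$, where $\Delta^W_u$ denotes the trace with respect to $g$ of the Hessian of $V$ in the pull-back connection built from $\nabla^W$ on $M$ and $\nabla'$ on $M'$. Differentiating the heat equation $\partial u/\partial t=\tau^W(u)$ in $t$ and commuting $\partial/\partial t$ past the spatial covariant derivatives yields a Jacobi-type equation $\partial V/\partial t=\Delta^W_u V+\sum_\alpha R'(du\cdot e_\alpha,V)du\cdot e_\alpha$; the commutator contributes only the curvature $R'$ of $\nabla'$, because the domain Weyl connection is independent of $t$. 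Pairing this equation with $V$ and subtracting $(1/2)(\partial/\partial t)\norm{V}^2=\langle\partial V/\partial t,V\rangle_{g'}$ from the Weitzenb\"ock identity gives the second relation.

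The only delicate point is the bookkeeping in the last step: one has to verify that replacing the Levi-Civita connection $\nabla$ of the domain by the Weyl connection $\nabla^W$ produces no curvature contribution in the commutator $[\partial/\partial t,\widetilde\nabla_{e_\alpha}]$ beyond replacing $\Delta_M$ by $\Delta^W_M$. This is straightforward once one writes $\tau^W(u)^i=\Delta^W_M u^i+g^{\alpha\beta}{\Gamma'}^i_{jk}u^j_\alpha u^k_\beta$ (a consequence of~\eqref{Weyl&HME} and the coordinate formula for $\tau$) and observes that the Christoffel symbols of $\nabla^W$ do not depend on $t$, so only the pull-back of the Christoffel symbols of $M'$ enters the time-derivative of the connection coefficients; the remainder is the classical heat-flow Bochner argument.
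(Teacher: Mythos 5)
Your proposal is correct and follows essentially the same route as the paper: the first identity is obtained exactly as in Lemma~\ref{Bell}, starting from the general Bochner formula~\eqref{Bochner} and trading $\tau(u)-\partial u/\partial t$ for $((n-2)/2)\,du(\Theta^\sharp)$ via~\eqref{Weyl&HME}, while the second is the standard parabolic Bochner computation for $\norm{\partial u/\partial t}^2$ with the same substitution. Your packaging of the second step as a Weitzenb\"ock identity plus a Jacobi-type equation for $\partial u/\partial t$ is just a reorganisation of the paper's term $\langle\partial u/\partial t,\nabla'_{\partial/\partial t}\tau(u)\rangle_{g'}$, and your observation that the time-independence of the domain connection keeps the commutator free of extra curvature contributions is the same elementary bookkeeping the paper leaves implicit.
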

\begin{proof}
The first relation follows in a similar fashion as in the proof of
Lemma~\ref{Bell}. Now we demonstrate the second relation. Standard
calculations imply the following Bochner-type formula for the family
of mappings $u(t,\cdot)$
\begin{multline*}
\frac{1}{2}\Delta\norm{(\partial u/\partial t)}^2=\norm{\nabla
(\partial u/\partial t)}^2-\sum_{\alpha}\langle R'(du\cdot e_\alpha,
(\partial u/\partial t))du\cdot e_\alpha,(\partial u/\partial t)
\rangle_{g'}\\
+\langle(\partial u/\partial t),\nabla'_{\partial/\partial t}\tau(u)
\rangle_{g'},
\end{multline*}
cf.~\cite{Sh}. Now the claim follows by substituting in this formula
$(\partial u /\partial t)+c_ndu(\Theta^\sharp)$ for the tension field
$\tau(u)$ and making elementary transformations.
\end{proof}

\section{Appendix: Unique continuation of Hermitian harmonic maps.}
The purpose of this appendix is to explain the following statement:
\begin{lemma}
Let $u:M\to M'$ be a Hermitian harmonic map between arbitrary
(connected) complex manifolds. Suppose that $u$ is holomorhic on a
non-empty open subset in $M$. Then $u$ is holomorphic on $M$.
\end{lemma}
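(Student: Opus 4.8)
The plan is to deduce the statement from the strong unique continuation principle of Aronszajn for second order elliptic differential inequalities, applied to the $(0,1)$-part $v:=d''u$ of the differential of $u$. Since unique continuation is a local property that propagates along connected sets, it suffices to prove: if $v$ vanishes on a non-empty open subset of a coordinate ball $B\subset M$, then $v\equiv 0$ on $B$. Granting this, the set of points of $M$ near which $v$ vanishes is open, non-empty by hypothesis, and closed by the local assertion; being non-empty it is all of $M$, and $d''u\equiv 0$ means precisely that $u$ is holomorphic.

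The crux is to produce, on $B$, an elliptic inequality for $v$ whose coefficients involve only the background data. Working in holomorphic coordinates $z^\alpha$ on $B$ and local coordinates $u^i$ on the target, write the Hermitian harmonic map equation of Example~\ref{HerHME} in the form
\[
g^{\alpha\bar\beta}\bigl(\partial_\alpha\partial_{\bar\beta}u^i+{\Gamma'}^i_{jk}\,\partial_\alpha u^j\,\partial_{\bar\beta}u^k\bigr)=0 .
\]
Differentiating this in the directions $\partial_{\bar\gamma}$ and writing $v^i_{\bar\gamma}=\partial_{\bar\gamma}u^i$, one observes that every second order anti-holomorphic derivative which occurs is of the form $\partial_\alpha v^i_{\bar\beta}$ or $\partial_{\bar\beta}v^i_{\bar\gamma}$ — a first derivative of $v$ — and that every remaining term carries a factor $\partial_{\bar\beta}u^k=v^k_{\bar\beta}$. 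Hence each $v^i_{\bar\gamma}$ satisfies an equation $L v^i_{\bar\gamma}=F^i_{\bar\gamma}$, where $L:=g^{\alpha\bar\beta}\partial_\alpha\partial_{\bar\beta}$ and $F^i_{\bar\gamma}$ is a sum of contractions of $\nabla v$ and of $v$ with tensors built from $g$, from the Christoffel symbols of the two connections, and from $u$ together with its first derivatives; in particular these tensors are smooth on $B$. Since $g$ is a positive Hermitian metric, $L$ is, in the underlying real coordinates, a genuine second order uniformly elliptic operator with smooth real coefficients acting componentwise on the real and imaginary parts of $v$; and on a slightly smaller ball, where $\norm{du}$ and all the relevant coefficients are bounded, we obtain a constant $C$ with
\[
\bigl|L v^i_{\bar\gamma}\bigr|\le C\Bigl(\textstyle\sum_{j,\delta}|v^j_{\bar\delta}|+\sum_{j,\delta}|\nabla v^j_{\bar\delta}|\Bigr).
\]

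It then remains to invoke Aronszajn's unique continuation theorem for systems satisfying such an inequality: a solution that vanishes on a non-empty open subset of the connected set $B$ vanishes identically on $B$. Covering $M$ by a chain of coordinate balls propagates the vanishing from the given open set to all of $M$, so $d''u\equiv 0$ and $u$ is holomorphic. The step requiring care is the bookkeeping in the differentiation: one must check that no second order term survives which cannot be re-expressed as a first derivative of $v=d''u$, so that the system is genuinely closed and (quasi-)linear in $v$ with smooth coefficients, and that $L$ is elliptic on the real and imaginary parts; everything else is a direct citation of Aronszajn's theorem.
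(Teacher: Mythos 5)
Your proposal is correct and follows essentially the same route as the paper: differentiate the local form of the Hermitian harmonic map equation by $\partial_{\bar\gamma}$, check that every resulting term is either a first derivative of $d''u$ or carries a factor of $d''u$, obtain a second order elliptic differential inequality for the components of $d''u$ (the paper writes it for the real and imaginary parts $v^i_{\bar\gamma}$, $w^i_{\bar\gamma}$ separately, as you note is needed), and apply Aronszajn's unique continuation theorem together with a connectedness argument. No substantive difference.
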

\begin{proof}
Without loss of generality we may assume that $M$ and $M'$ are open
balls in $\mathbf C^n$ and $\mathbf C^{n'}$, endowed with torsion-free
complex connections and a Hermitian metric $g$ on $M$. Denote by
$v^i_{\bar\gamma}$ and $w^i_{\bar\gamma}$ the real and imaginary parts
of $\partial_{\bar\gamma}u^i$. By the discussion in Ex.~\ref{HerHME}
the coordinate functions $u^i$ satisfy the equation
$$
Lu^i+g^{\alpha\bar\beta}\Gamma^{\prime~i}_{jk}(\partial_\alpha u^j)
(\partial_{\bar\beta}u^k)=0,
$$
where $L$ denotes the holomorphic Laplacian $g^{\alpha\bar\beta}
\partial_{\alpha}\partial_{\bar\beta}$. Applying to this equation
$\partial_{\bar\gamma}$ and using the fact that $L$ is a real operator
(i.e. it maps real functions to real functions), we obtain, after
elementary transformations, that on any compact subset $K\subset M$
the following inequalities hold
$$
\lvert{Lv^i_{\bar\gamma}}\rvert^2\leqslant C_K\sum_{j,\bar\mu}
\left(\lvert{d v^j_{\bar\mu}}\rvert^2+\lvert{d w^j_{\bar\mu}}\rvert^2+
\lvert{v^j_{\bar\mu}}\rvert^2+\lvert{w^j_{\bar\mu}}\rvert^2\right),
$$
$$
\lvert{Lw^i_{\bar\gamma}}\rvert^2\leqslant C_K\sum_{j,\bar\mu}
\left(\lvert{d v^j_{\bar\mu}}\rvert^2+\lvert{d w^j_{\bar\mu}}\rvert^2+
\lvert{v^j_{\bar\mu}}\rvert^2+\lvert{w^j_{\bar\mu}}\rvert^2\right).
$$
Now Aronszajn's theorem~\cite{Ar} applies to show that if
$v^i_{\bar\gamma}$ and $w^i_{\bar\gamma}$ vanish on an open subset of
$K$, then they vanish on all interior of $K$. Since the compact set
$K$ is arbitrary, we are done.
\end{proof}

{\small

}

\end{document}